\newtheorem{theorem}{Theorem}[section]
\newtheorem{proposition}[theorem]{Proposition}
\newtheorem{lemma}[theorem]{Lemma}
\newtheorem{follow}[theorem]{Corollary}
\theoremstyle{definition}
\newtheorem{remark}[theorem]{Remark}
\newcommand{\bel}{\begin{equation} \label}
\newcommand{\ee}{\end{equation}}
\newcommand{\pd}{\partial}
\newcommand{\C}{{\mathbb C}}
\newcommand{\R}{{\mathbb R}}
\newcommand{\N}{{\mathbb N}}
\newcommand{\re}{\mathfrak R}
\newcommand{\im}{\mathfrak I}
\newcommand{\B}{{\mathcal B}}
\newcommand{\cC}{{\mathcal C}}
\newcommand{\cH}{{\mathcal H}}
\newcommand{\sH}{{\mathscr H}}
\newcommand{\U}{{\mathcal U}}
\newcommand{\Z}{{\mathbb Z}}
\def\epsilon{\varepsilon}
\def\phi {\varphi}
\def\beq{\begin{equation}}
\def\eeq{\end{equation}}
\renewcommand{\leq}{\leqslant}
\renewcommand{\geq}{\geqslant}
\newcommand{\bea}{\begin{eqnarray}}
\newcommand{\eea}{\end{eqnarray}}
\newcommand{\beas}{\begin{eqnarray*}}
\newcommand{\eeas}{\end{eqnarray*}}
\providecommand{\abs}[1]{\left\lvert#1\right\rvert}
\providecommand{\norm}[1]{\left\lVert#1\right\rVert}
\title[Stability result for elliptic inverse periodic coefficient problem]{Stability result for elliptic inverse periodic coefficient problem by partial Dirichlet-to-Neumann map}
\author{Mourad Choulli, Yavar Kian, Eric Soccorsi}
\begin{document}
\begin{abstract}
We study the inverse problem of identifying a periodic potential perturbation of the Dirichlet Laplacian acting in an infinite cylindrical domain, whose cross section is assumed to be bounded. We prove log-log stable determination of the potential with respect to the partial Dirichlet-to-Neumann map, where the Neumann data is taken on slightly more than half of the boundary of the domain.
\end{abstract}
\maketitle

%%%%%%%%%%%%%%%%%%%%%%%%%%%%%%%%%%%%%%%%%%%%%%%%%%%%%%%%%%%%%%%%%%%%%%%%%%
%%%%%%%%%%%%%%%%%%%%%%%%%%%%%%%%%%%%%%%%%%%%%%%%%%%%%%%%%%%%%%%%%%%%%%%%%%

\section{Introduction}
\label{sec-intro}
\setcounter{equation}{0}
%\subsection{What we are aiming for}
Let $\Omega := \R \times \omega$, where 
$\omega$ is a bounded domain of $\mathbb{R}^2$ which contains the origin, with $C^2$-boundary. Throughout the entire text we denote the generic point $x \in \Omega$ by $x=(x_1,x')$, where $x_1 \in \R$ and $x':= (x_2,x_3) \in \omega$. Given $V \in L^\infty(\Omega)$, real-valued and $1$-periodic w.r.t. $x_1$, i.e.
\bel{eq-per}
V(x_1+1,x')=V(x_1,x'),\ x' \in \omega,\ x_1 \in \R,
\ee
we consider the following boundary value problem (abbreviated as BVP):
\bel{eq1}
\left\{ 
\begin{array}{rcll} 
(-\Delta + V) v & = & 0, & \mbox{in}\ \Omega,\\ 
v & = & f ,& \mbox{on}\ \Gamma : = \pd \Omega.
\end{array}
\right.
\ee
Since $\Gamma= \R \times \pd \omega$, the outward unit vector $\nu$ normal to $\Gamma$ reads
$$ \nu(x_1,x')=(0,\nu'(x')),\ x=(x_1,x')\in\Gamma, $$
where $\nu'$ is the outer unit normal vector of $\pd \omega$.
Therefore, for notational simplicity, we shall refer to $\nu$ for both exterior unit vectors normal to $\pd  \omega$ and to $\Gamma$.
Next for $\xi \in \mathbb{S}^1 :=\{ y\in\R^2;\ \abs{y}=1\}$ fixed, we introduce the $\xi$-illuminated (resp., $\xi$-shadowed) face of $\pd \omega$, as
\bel{xi-isf} 
\pd \omega_{\xi}^- := \{ x \in \pd \omega;\ \xi \cdot \nu(x) \leq 0 \}\ (\mbox{resp.},\ \pd \omega_{\xi}^+= \{x \in \pd \omega;\ \xi \cdot \nu(x) \geq 0\}).
\ee
Here and in the remaining part of this text, we denote by 
$x \cdot y := \sum_{j=1}^k x_j y_j$
the Euclidian scalar product of any two vectors $x:=(x_1,\ldots,x_k)$ and $y:=(y_1,\ldots,y_k)$ of $\R^k$, for $k \in \N^*$, and we put $|x|:=(x \cdot x)^{1 \slash 2}$.

Set $G:=\R\times G'$, where $G'$ is an arbitrary closed neighborhood of $\pd \omega_{\xi}^-$ in $\pd \omega$. In the present paper we seek stability in the determination of $V$ from the knowledge of the partial Dirichlet-to-Neumann (DN) map
\bel{a1}
\Lambda_V :  f  \mapsto {\pd_\nu v}_{|G},
\ee
where $\pd_\nu v (x) := \nabla v (x) \cdot \nu(x)$ is the normal derivative of the solution $v$ to \eqref{eq1}, computed at $x \in \Gamma$.
Otherwise stated we aim for recovering the $1$-periodic electric perturbation $V$ of the Dirichlet Laplacian in the waveguide $\Omega$, by probing the system
with voltage $f$ at the boundary and measuring the current $\pd_\nu u$ on the sub-part $G$ of $\Gamma$.
From a physics viewpoint, this amounts to estimating the impurity potential perturbing the guided propagation in periodic media such as crystals.
 
\subsection{A short bibliography}

Inverse coefficient problems in elliptic partial differential equations such as the celebrated Calder\'on problem have attracted many attention in recent years.
In \cite{SU}, one of the first mathematical papers dealing with this problem, Sylvester and Uhlmann showed in dimension $n \geq 3$ that the full DN map (both the input and the output are taken on the whole boundary of the domain) uniquely determines a smooth conductivity coefficient. The case of $C^1$ conductivities or Lipschitz conductivities sufficiently close to the identity, is treated by \cite{HT}. The identifiability of an unknown coefficient from partial knowledge of the DN map was first proved in \cite{BU}. Assuming that the voltage (i.e. the Dirichlet data) is prescribed everywhere, Bukhgeim and Uhlmann claimed unique determination of the conductivity even when the current measurement (i.e. the Neumann data) is taken on slightly more than half of the boundary. Kenig, Sj\"ostrand and Uhlmann improved this result in \cite{KSU} by
taking both the Dirichlet and the Neumann observations on a neighborhood of, respectively, the back and the front face illuminated by a point light-source lying outside of the convex hull of the domain. Identification for the corresponding two-dimensional inverse problem was treated by Bukhgeim in \cite{B} with the full data, and by Imanuvilov, Uhlmann and Yamamoto in \cite{IUY1, IUY2} with partial data. 

The stability issue for $n \geq 3$ was first addressed in \cite{Al} by Alessandrini, who established a log-type stability estimate for the conductivity from the full DN map. Later on, in \cite{HW}, Heck and Wang proved a log-log-type stability estimate with respect to the partial DN map of \cite{BU}. More recently, in \cite{T}, Tzou showed
that both the magnetic field and the electric potential depend stably on the DN map, even when the Neumann boundary measurement is taken only on a subset that is slightly larger than half of the boundary. In \cite{CDR1, CDR2}, Caro, Dos Santos Ferreira and Ruiz derived a log-log stability estimate for the electric potential from the partial DN map of \cite{KSU}.
Notice that the derivation of the stability estimate of \cite{CDR1} when the domain of observation is illuminated by a point at infinity, was revisited and simplified in \cite{CKS1}. For the stability issue in the two-dimensional case we refer to \cite{BIY,NSa,Sa}.

All the above mentioned results were obtained in a bounded domain. It turns out that there is only a small number of mathematical papers dealing with inverse boundary measurements problems in an unbounded domain. Several of them are concerned with the slab geometry. This is precisely the case of \cite{Ik,SW}, where embedded objects are identified in an infinite slab. In \cite{LU}, Li and Uhlmann proved that the compactly supported electric potential of the stationnary Schr\"odinger operator can
be determined uniquely, when the Dirichlet and Neumann data are given either on the different boundary hyperplanes of the slab or on the same hyperplane.
This result was generalized to the case of a magnetic Schr\"odinger operator in \cite{KLU}. Let us mention that inverse boundary value problems in an infinite slab were addressed by Yang in \cite{Y} for bi-harmonic operators. Recently, several stability results were derived in \cite{CS, Ki, KPS1, KPS2, BKS} for non compactly supported coefficients inverse problems in an infinite waveguide with a bounded cross section. More specifically, we refer to \cite{KKS, CKS} for the analysis of inverse problems in the framework of a periodic cylindrical domain examined in this paper.

\subsection{Notations and admissible potentials}
In this subsection we introduce some basic notations used throughout the section and define the set of admissible potentials under consideration in this paper.

Let $Y$ be either $\omega$, $\pd \omega$ or $G'$.
For $r$ and $s$ in $\R$, we denote by $\cH^{r,s}(\R \times Y)$ the set $H^r(\R;H^s(Y))$. Evidently we write $\cH^{r,s}(\Omega)$ (resp., $\cH^{r,s}(\Gamma)$, $\cH^{r,s}(G)$) instead of
$\cH^{r,s}(\R \times \omega)$ (resp., $\cH^{r,s}(\R \times \pd \omega)$, $\cH^{r,s}(\R \times G')$). Although this notation is reminiscent of the one used by Lions and Magenes in \cite{LM1} for anisotropic Sobolev spaces $H^r(\R;L^2(Y)) \cap L^2(\R;H^s(Y))$, it is worth noticing that they do not coincide with $\cH^{r,s}(\R \times Y)$, unless we have $r=s=0$. 
Next, it is easy to see for each $r>0$ and $s>0$ that $\cH^{-r,-s}(\R \times Y)$ is canonically identified with the space dual to $\cH_0^{r,s}(\R \times Y)$, with respect to the pivot space
$\cH^{0,0}(\R \times Y)=L^2(\R \times Y)$. Here we have set $\cH_0^{r,s}(\R \times Y):=H^r(\R;H_0^s(Y))$, where $H_0^s(Y)$ denotes the closure of $C_0^{\infty}(Y)$ in the topology of the Sobolev space $H^s(Y)$. 

Further, $X_1$ and $X_2$ being two Hilbert spaces, we denote by $\B(X_1,X_2)$ the class of bounded operators $T : X_1 \to X_2$. %If $X_1=X_2=X$ we write $\B(X)$ instead of $\B(X,X)$.

%\subsection{Admissible potentials}
Let us now introduce the set of admissible unknown potentials. To this end we denote by $C_\omega$ the Poincar\'e constant associated with $\omega$, i.e. the largest of those constants $c>0$ such that the Poincar\'e inequality 
\bel{inegp1}
\| \nabla' u \|_{L^2(\omega)} \geq c \| u \|_{L^2(\omega)},\ u \in H_0^1(\omega),
\ee
holds. Here $\nabla'$ stands for the gradient with respect to $x'=(x_2,x_3)$. Otherwise stated, we have
\bel{p1const}
C_\omega := \sup \{ c >0\ \mbox{satisfying}\ \eqref{inegp1} \}.
\ee
For $M_- \in (0,C_\omega)$ and $M_+ \in [M_-,+\infty)$, we define the set of admissible unknown potentials as
\bel{admpot}
\mathscr{V}_\omega(M_\pm) := \{ V \in L^{\infty}(\Omega;\R)\ \mbox{satisfying}\ \eqref{eq-per},\ \| V \|_{L^{\infty}(\Omega)} \leq M_+\ \mbox{and}\ \| \max(0,-V) \|_{L^{\infty}(\Omega)} \leq M_- \}.
\ee
Notice that the constraint $\| \max(0,-V) \|_{L^{\infty}(\Omega)} \leq M_-$, imposed on admissible potentials $V$ in $\mathscr{V}_\omega(M_\pm)$, guarantees that the perturbation
by $V$ of the Dirichlet Laplacian in $\Omega$, is boundedly invertible in $L^2(\Omega)$, with norm not greater than $(C_\omega-M_-)^{-1}$. 
This condition could actually be weakened by only requiring that the distance of the spectrum of this operator to zero, be positive. Nevertheless, since the above mentioned condition on $V$ is more explicit than this latter, we stick with the definition \eqref{admpot} in the remaining part of this text.

\subsection{Statement of the main result}
Prior to stating the main result of this article we first examine in Proposition \ref{pr-a} below, the well-posedness of the BVP \eqref{eq1} in the space $H_\Delta(\Omega):=\{ u \in L^2(\Omega);\ \Delta u \in L^2(\Omega)\}$ endowed with the norm
$$ \norm{u}^2_{H_\Delta(\Omega)} :=\norm{u}_{L^2(\Omega)}^2+\norm{\Delta u}_{L^2(\Omega)}^2, $$
for suitable non-homogeneous Dirichlet boundary data $f$. Second, we rigorously define the DN map $\Lambda_V$ expressed in \eqref{a1} and describe its main properties.

As a preamble, we introduce the two following trace maps by adapting the derivation of \cite[Section 2, Theorem 6.5]{LM1}. Namely,
since $C_0^\infty(\overline{\Omega}):= \{ u_{| \overline{\Omega}},\ u \in C_0^{\infty}(\R^3) \}$ is dense in $H_\Delta(\Omega)$, by Lemma \ref{density} below, we extend the mapping
$$ \mathcal T_0 u :=u_{\vert\Gamma}\ (\mbox{resp.,}\ \mathcal T_1 u :={\pd_\nu u}_{\vert\Gamma}),\ u \in C_0^\infty(\overline{\Omega}), $$
into a continuous function $\mathcal T_0 : H_\Delta(\Omega) \to \cH^{-2,-\frac{1}{2}}(\Gamma)$ (resp., $\mathcal T_1 : H_\Delta(\Omega) \to \cH^{-2,-\frac{3}{2}}(\Gamma)$). We refer to Lemma \ref{trace} and its proof, for more details. 

Next we consider the space 
$$\sH(\Gamma):= \mathcal T_0 H_\Delta(\Omega) = \{ \mathcal T_0 u;\ u \in H_\Delta(\Omega) \},$$ 
and notice from Lemma \ref{p5} that $\mathcal T_0$ is bijective from $B:=\{ u \in L^2(\Omega);\ \Delta u = 0 \}$ onto $\sH(\Gamma)$. Therefore, with reference to \cite{BU,NS}, we put
\bel{nhg}
\norm{f}_{\sH(\Gamma)} : =\norm{\mathcal T_0^{-1} f}_{H_\Delta(\Omega)} = \norm{\mathcal T_0^{-1} f}_{L^2(\Omega)},
\ee
where $\mathcal T_0^{-1}$ denotes the operator inverse to $\mathcal{T}_0 : B \to \sH(\Gamma)$. 

We have the following existence and uniqueness result for the BVP \eqref{eq1}.

\begin{proposition}
\label{pr-a}
Pick $V \in  \mathscr{V}_\omega(M_\pm)$, where $M_- \in (0,C_\omega)$ and $M_+ \in [M_-,+\infty)$ are fixed.
% and assume that $0 \notin \sigma(A_V)$. 
\begin{enumerate}[(i)]
\item Then, for any $f \in \sH(\Gamma)$, there exists a unique solution $v \in L^2(\Omega)$
to \eqref{eq1}, such that the estimate
\bel{p6a}
\norm{v}_{L^2(\Omega)}\leq C \norm{f}_{\sH(\Gamma)},
\ee
holds for some constant $C>0$ depending only on $\omega$ and $M_\pm$. 
\item The DN map $\Lambda_V : f \mapsto \mathcal T_1 v_{| G}$ is a bounded operator from $\sH(\Gamma)$ into $\cH^{-2,-\frac{3}{2}}(G)$.
\item
Moreover, for each $W \in \mathscr{V}_\omega(M_\pm)$, the operator $\Lambda_{V}-\Lambda_{W}$ is bounded from $\sH(\Gamma)$ into $L^2(G)$.
\end{enumerate}
\end{proposition}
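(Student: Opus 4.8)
The plan is to establish the three items in order. \emph{For (i)}, I would first lift the boundary data: put $u_0:=\mathcal T_0^{-1}f\in B$, so that $u_0\in L^2(\Omega)$, $\Delta u_0=0$, $\mathcal T_0 u_0=f$ and $\norm{u_0}_{L^2(\Omega)}=\norm{f}_{\sH(\Gamma)}$ by \eqref{nhg}. Seeking $v$ in the form $v=u_0+w$ turns \eqref{eq1} into the homogeneous Dirichlet problem $(-\Delta+V)w=-Vu_0\in L^2(\Omega)\subset H^{-1}(\Omega)$, $w\in H^1_0(\Omega)$. The admissibility constraint $M_-<C_\omega$ together with the slicewise Poincaré inequality \eqref{inegp1} (used in $x'$ for a.e.\ $x_1$) makes the sesquilinear form $(u,\phi)\mapsto\int_\Omega(\nabla u\cdot\overline{\nabla\phi}+Vu\overline\phi)$ coercive on $H^1_0(\Omega)$, so Lax--Milgram provides a unique such $w$ with $\norm{w}_{H^1_0(\Omega)}\leq C\norm{Vu_0}_{H^{-1}(\Omega)}\leq CM_+\norm{f}_{\sH(\Gamma)}$. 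Then $v:=u_0+w\in L^2(\Omega)$, $\Delta v=\Delta w=Vv\in L^2(\Omega)$ (so $v\in H_\Delta(\Omega)$), and $\mathcal T_0 v=f$ since $w$ has vanishing trace; \eqref{p6a} follows. Uniqueness: a solution $v\in L^2(\Omega)$ with $f=0$ satisfies $\Delta v=Vv\in L^2(\Omega)$, $\mathcal T_0 v=0$; subtracting from $v$ the $H^2(\Omega)\cap H^1_0(\Omega)$ solution of $\Delta z=Vv$, $z_{|\Gamma}=0$ (which exists since the Dirichlet Laplacian on $\Omega$ has spectrum bounded away from zero) and using the injectivity of $\mathcal T_0$ on $B$ (Lemma \ref{p5}) gives $v\in H^2(\Omega)\cap H^1_0(\Omega)$, hence $v=0$ by coercivity. \emph{For (ii)}, the construction gives $\norm{v}_{H_\Delta(\Omega)}^2=\norm{v}_{L^2(\Omega)}^2+\norm{Vv}_{L^2(\Omega)}^2\leq(1+M_+^2)C^2\norm{f}_{\sH(\Gamma)}^2$, so $f\mapsto v$ is bounded from $\sH(\Gamma)$ into $H_\Delta(\Omega)$; composing with the bounded trace map $\mathcal T_1:H_\Delta(\Omega)\to\cH^{-2,-\frac{3}{2}}(\Gamma)$ (Lemma \ref{trace}) and the bounded restriction $\cH^{-2,-\frac{3}{2}}(\Gamma)\to\cH^{-2,-\frac{3}{2}}(G)$ yields the claim.

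\emph{Item (iii) is the crux.} Given $f\in\sH(\Gamma)$, let $v,\tilde v\in L^2(\Omega)$ be the solutions of \eqref{eq1} for $V$ and $W$, and set $u:=v-\tilde v$. Then $\mathcal T_0 u=f-f=0$, while $(-\Delta+V)u=(-\Delta+V)v-(-\Delta+V)\tilde v=(W-V)\tilde v=:h\in L^2(\Omega)$, with $\norm{h}_{L^2(\Omega)}\leq2M_+\norm{\tilde v}_{L^2(\Omega)}\leq C\norm{f}_{\sH(\Gamma)}$ by \eqref{p6a}. Thus $u$ is the variational $H^1_0(\Omega)$-solution of $(-\Delta+V)u=h$ with vanishing Dirichlet data, so $\norm{u}_{H^1(\Omega)}\leq C\norm{h}_{L^2(\Omega)}$. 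I would then upgrade $u$ to $H^2(\Omega)\cap H^1_0(\Omega)$: rewriting $-\Delta u=h-Vu$, whose right-hand side is in $L^2(\Omega)$ with norm $\leq C\norm{h}_{L^2(\Omega)}$, and combining the up-to-the-boundary $H^2$-estimate for the Dirichlet Laplacian on the $C^2$-domain $\omega$ (applied cross-sectionally) with a partition of unity $\{\chi_k(x_1)\}_{k\in\Z}$ of uniformly bounded derivatives in the $x_1$-variable, one gets $\norm{u}_{H^2(\Omega)}\leq C(\norm{h}_{L^2(\Omega)}+\norm{u}_{H^1(\Omega)})\leq C\norm{f}_{\sH(\Gamma)}$, the constant being independent of $k$ by translation invariance in $x_1$. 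Finally, since $u\in H^2(\Omega)$, $\nabla u\in H^1(\Omega)^3$, so its trace on $\Gamma=\R\times\pd\omega$ lies in $L^2(\Gamma)^3$ and $\mathcal T_1 u=\pd_\nu u_{|\Gamma}=\nu\cdot(\nabla u)_{|\Gamma}\in L^2(\Gamma)$; by linearity of $\mathcal T_1$ and of the restriction, $(\Lambda_V-\Lambda_W)f=\mathcal T_1 v_{|G}-\mathcal T_1\tilde v_{|G}=\mathcal T_1 u_{|G}\in L^2(G)$ with $\norm{(\Lambda_V-\Lambda_W)f}_{L^2(G)}\leq C\norm{u}_{H^2(\Omega)}\leq C\norm{f}_{\sH(\Gamma)}$, which is the assertion.

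\emph{Main obstacle.} The only non-routine ingredient is the global, up-to-the-boundary $H^2$-regularity estimate on the \emph{unbounded} cylinder $\Omega$ with a constant uniform in the translation parameter, together with the ensuing verification that the Neumann trace of $u$ genuinely lands in $L^2(\Gamma)$ (rather than merely in a negative-order anisotropic space, as for a generic element of $H_\Delta(\Omega)$). Both rely essentially on the product structure $\Omega=\R\times\omega$ and the $C^2$-smoothness of $\pd\omega$. The lifting, the Lax--Milgram step, and the bookkeeping of the constants in terms of $\omega$ and $M_\pm$ are routine.
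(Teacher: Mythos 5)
Your proposal is correct and follows essentially the same route as the paper: lift the Dirichlet data by $\mathcal T_0^{-1}f$ and solve the resulting homogeneous problem via the coercive form (the paper phrases this as bounded invertibility of $A_V$), read (ii) off the continuity of $\mathcal T_1$ on $H_\Delta(\Omega)$, and for (iii) place the difference $u=v-\tilde v$ in $H^2(\Omega)\cap H^1_0(\Omega)$ so that its Neumann trace lands in $L^2(\Gamma)$. The only divergence is the global $H^2$ estimate on the unbounded cylinder, which you sketch via cross-sectional regularity and a translation-invariant partition of unity, whereas the paper simply quotes it from \cite[Lemma 2.2]{CKS} as the equivalence of the $H^2(\Omega)$ norm with the graph norm of $A_V$.
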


Put $\check{\Omega}:=(0,1) \times \omega$. In view of Proposition \ref{pr-a}, we now state the main result of this paper.  
\begin{theorem}
\label{thm1} 
Given $M_- \in (0,C_\omega)$ and $M_+ \in [M_-,+\infty)$, let $V_j \in \mathscr{V}_\omega(M_\pm)$ for $j=1,2$.  
Then, there exist two constants $C>0$ and $\gamma_* \in (0,1)$, both of them depending only on $\omega$, $M_\pm$ and $G'$, such that the estimate
\bel{thm1a} 
\norm{V_1-V_2}_{H^{-1}(\check{\Omega})}\leq C \Phi \left( \norm{\Lambda_{{V}_1}-\Lambda_{{V}_2}}\right),
\ee
holds for
\bel{def-Phi}
\Phi(\gamma) := \left\{ \begin{array}{cl} \gamma & \mbox{if}\ \gamma \geq \gamma^*,\\
(\ln \abs{\ln \gamma})^{-1} & \mbox{if}\ \gamma \in (0, \gamma^*),\\ 
0 & \mbox{if}\ \gamma=0.
\end{array} \right.
\ee
Here $\norm{\Lambda_{{V}_1}-\Lambda_{{V}_2}}$ denotes the norm of $\Lambda_{{V}_1}-\Lambda_{{V}_2}$ in $\B(\sH(\Gamma),L^2(G))$.
% the set of linear bounded operators from $\sH(\Gamma)$ into $L^2(G)$.
\end{theorem}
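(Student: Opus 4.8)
The plan is to follow the now-classical scheme for partial-data stability (in the spirit of \cite{HW,T,CKS1}), but adapted to the cylindrical waveguide via a partial Floquet--Bloch decomposition in the periodic variable $x_1$. The starting point is the reduction of the problem on $\Omega$ to a family of problems on the bounded cell $\check{\Omega}$ parametrized by the quasi-momentum $\theta \in [0,2\pi)$: writing $V=V_1-V_2$ and decomposing in Bloch modes, it suffices to estimate each Fourier coefficient $\widehat{V}(k,\cdot)$ (with respect to $x_1 \in (0,1)$, $k \in \Z$) in terms of the DN-map norm, and then recombine. For each fixed frequency one is reduced to a Calder\'on-type problem on $\check{\Omega}$ with a spectral parameter, and one introduces complex geometrical optics (CGO) solutions $v_j = e^{\rho_j \cdot x}(1+\psi_j)$, $\rho_1+\rho_2 = i\zeta$ with $\zeta$ encoding both the transverse Fourier frequency of $V$ and the Bloch parameter, where the correction terms $\psi_j$ are controlled by a Carleman estimate with boundary terms. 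The key identity is the integral relation
\bel{pp1}
\int_{\check{\Omega}} (V_1-V_2)\, v_1 v_2\, dx = \langle (\Lambda_{V_1}-\Lambda_{V_2})(\mathcal T_0 v_1), \mathcal T_0 \overline{v_2} \rangle + (\text{boundary remainder on } \Gamma \setminus G),
\ee
valid by Proposition \ref{pr-a}(iii) together with Green's formula in $H_\Delta(\Omega)$; inserting the CGO solutions produces, after passing to the limit in the large parameter, an estimate of the Fourier--Bloch transform of $V_1-V_2$ on a bounded region in frequency space.

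The main technical steps, in order, are as follows. First I would establish a Carleman estimate in $\Omega$ (or equivalently a $\theta$-uniform one on $\check{\Omega}$) with a boundary term allowing one to absorb the contribution of $\Gamma \setminus G$ onto the illuminated/shadowed faces $\pd\omega_\xi^\mp$; this is where the hypothesis that $G'$ is a closed neighborhood of $\pd\omega_\xi^-$ enters, and where the ``slightly more than half the boundary'' phenomenon is used, exactly as in \cite{BU,HW,T}. Second, I would construct the CGO solutions and quantify the decay of $\norm{\psi_j}$ in the large parameter $\tau$, keeping track of the polynomial dependence on the Bloch frequency $k$; because $V_j$ is only $L^\infty$ and non-compactly supported but periodic, the construction is carried out cell-by-cell and the periodicity is used to sum the estimates. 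Third, plugging the CGOs into \eqref{pp1} and letting $\tau \to \infty$ gives a bound of the form $|\widehat{(V_1-V_2)}(k,\xi')| \lesssim e^{C(|k|+|\xi'|)}\left( \norm{\Lambda_{V_1}-\Lambda_{V_2}} + \tau^{-1/2}\right)$ for frequencies in a ball of radius $\sim \tau$, which after optimizing in $\tau$ yields a logarithmic bound on the low-frequency part of $V_1-V_2$ in $H^{-1}(\check\Omega)$. Fourth, the high-frequency part is controlled trivially by the a priori bound $\norm{V_j}_{L^\infty}\leq M_+$, and balancing the two contributions produces the $(\ln|\ln\gamma|)^{-1}$ rate; the trivial regime $\gamma \geq \gamma^*$ is absorbed into the constant $C$.

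The hard part, I expect, is twofold. The first genuine obstacle is the functional-analytic bookkeeping forced by the unbounded geometry: the DN map acts between the nonstandard spaces $\sH(\Gamma)$ and $L^2(G)$ rather than between honest Sobolev spaces, so one must verify that the boundary pairing in \eqref{pp1} makes sense for the CGO boundary traces and that these traces have controlled $\sH(\Gamma)$-norm (via $\norm{\cdot}_{\sH(\Gamma)}=\norm{\mathcal T_0^{-1}\cdot}_{L^2(\Omega)}$ from \eqref{nhg}), uniformly in the Bloch parameter; this is precisely what makes the waveguide case more delicate than the bounded one. The second obstacle is obtaining the Carleman estimate and the CGO correction bounds \emph{uniformly in the quasi-momentum} $\theta$ and with explicit polynomial-in-$k$ growth, so that the Floquet sum over $k \in \Z$ converges after the frequency cutoff; this requires a careful choice of the limiting Carleman weights so that the spectral parameter coming from the Bloch decomposition does not spoil the positivity of the conjugated operator, using the constraint $M_- < C_\omega$ which guarantees invertibility of $-\Delta+V_j$ with norm at most $(C_\omega-M_-)^{-1}$.
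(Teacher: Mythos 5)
Your overall architecture matches the paper's: FBG reduction to the quasi-periodic cell problem on $\check{\Omega}$, CGO solutions with linear phases compatible with the quasi-periodic boundary conditions, a Bukhgeim--Uhlmann-type Carleman estimate with linear weight to absorb the Neumann contribution on the unobserved part of the boundary, and a low/high frequency splitting closed by optimization. One of the difficulties you flag is not actually present: uniformity in the quasi-momentum $\theta$ is not needed. Since $\mathcal U(\Lambda_{V_1}-\Lambda_{V_2})\mathcal U^{-1}$ is the direct integral of the fibered operators $\Lambda_{V_1,\theta}-\Lambda_{V_2,\theta}$, its norm equals the supremum over $\theta$ of the fibered norms, so $\norm{\Lambda_{V_1,\theta}-\Lambda_{V_2,\theta}}\leq\norm{\Lambda_{V_1}-\Lambda_{V_2}}$ for every $\theta$ and, $\Phi$ being non-decreasing, it suffices to prove the fibered stability estimate for a single arbitrary $\theta$; the constant in \eqref{thm1a} is then $\inf_\theta C_\theta$. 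There is no Floquet sum over $k$ to be made convergent.

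The genuine gap is in your third step. The Carleman/CGO argument does not yield a bound on $\widehat{(V_1-V_2)}(k,\eta)$ for all $\eta$ in a ball of radius $\sim\tau$. Because the Neumann data is measured only on $G'$, a neighborhood of $\pd\omega_{\xi_0}^-$, the shadowed-face boundary term in the Carleman estimate can be absorbed only when the direction $\xi$ of the linear weight stays in a small neighborhood of $\xi_0$; the resulting orthogonality identity then controls $\widehat{v_k}(\eta)$ (with $v_k$ the $k$-th Fourier coefficient of $V_1-V_2$ in $x_1$) only for $\eta$ orthogonal to such $\xi$, i.e. for $\eta$ in a thin planar sector $E$ of fixed aperture determined by $G'$, not on a full disk. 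Passing from $E$ to a neighborhood of the origin in frequency space requires a quantitative analytic continuation step: each $v_k$ is supported in $\overline{\omega}$, so $\eta\mapsto\widehat{v_k}(\eta)$ is analytic with explicit derivative bounds, and a propagation-of-smallness estimate for analytic functions from a set of positive measure (Vessella, Apraiz--Escauriaza) gives $\norm{\widehat{v_k}}_{L^\infty(D_R)}\leq N c^{1-\upsilon}\norm{\widehat{v_k}}_{L^1(E)}^{\upsilon}$ with $\upsilon\in(0,1)$. This step is not cosmetic bookkeeping: the H\"older loss $\upsilon$ and the growth $c\sim e^{\rho}$ of the analyticity constant on the ball of radius $\rho$ are exactly what force the choice $\tau\sim\rho^{5/\upsilon}e^{2\rho(1/\upsilon-1)}$ in the final optimization and hence produce the double logarithm in $\Phi$. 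Without it, the frequency information you actually possess is confined to the sector $E$ and does not control $\norm{V_1-V_2}_{H^{-1}(\check{\Omega})}$; and your heuristic that "balancing the two contributions" gives $(\ln\abs{\ln\gamma})^{-1}$ is unsupported, since the balance you describe (a priori bound versus $e^{C\tau}\norm{\Lambda_{V_1}-\Lambda_{V_2}}$ on a full ball) would yield a single-log rate.
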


The statement of Theorem \ref{thm1} remains valid for any periodic potential $V \in L^{\infty}(\Omega)$, provided $0$ is in the resolvent set of $A_V$, the self-adjoint realization  in $L^2(\Omega)$ of the Dirichlet Laplacian $-\Delta + V$. In this case, the multiplicative constants $C$ and $\gamma_*$, appearing in \eqref{thm1a}-\eqref{def-Phi}, depend on (the inverse of) the distance $d>0$, between $0$ and the spectrum of $A_V$. In the particular case where $V \in \mathscr{V}_\omega(M_\pm)$, with $M_- \in (0,C_\omega)$, we have $d \geq C_\omega- M_-$, and the implicit condition $d>0$ imposed on $V$, can be replaced by the explicit one on the negative part of the potential, i.e. $\| \max(0,-V) \|_{L^\infty(\Omega)} \leq M_-$.

\subsection{Outline}
The remaining part of the paper is organized as follows. Section \ref{sec-pra} contains the proof of Proposition \ref{pr-a}. In Section \ref{sec-dec}, we decompose \eqref{eq1} with the aid of the Floquet-Bloch-Gel'fand (FBG) transform, into a family of BVP with quasi-periodic boundary conditions, 
\bel{eq2}
\left\{
\begin{array}{rcll}
(-\Delta  +V ) v & = & 0, & \mbox{in}\ \check{\Omega}, \\
v & = & g, & \mbox{on}\ \check{\Gamma}:=(0,1) \times \pd \omega, \\
v(1,\cdot ) - e^{i\theta} v(0,\cdot) & = & 0, & \mbox{in}\ \omega, \\
\pd_{x_1} v(1,\cdot) - e^{i\theta} \pd_{x_1} v(0,\cdot) & = & 0, & \mbox{in}\ \omega,
\end{array}
\right.
\ee
indexed by the real parameter $\theta \in [0,2 \pi)$. Here $g$ stands for the FBG transform of $f$, computed at $\theta$. We study the direct problem associated with \eqref{eq2} and reformulate the inverse problem under consideration as to whether the unknown function $V$ may be stably determined from the partial DN map associated with \eqref{eq2}, for any arbitrary $\theta \in [0, 2 \pi)$. We state in Theorem \ref{thm2} that the answer is positive and establish that this claim entails Theorem \ref{thm1}. There are two key ingredients in the proof of Theorem \ref{thm2}. The first one is a sufficiently rich set of suitable complex geometric optics (CGO) solutions to \eqref{eq2}, built in Section \ref{sec-cgo}. The second one is a specifically designed Carleman estimate for quasi-periodic Laplace operators, derived in Section \ref{sec-ce}. Finally, the proof of Theorem \ref{thm2} is displayed in Section \ref{sec-thm2}.

Let us now briefly comment on the strategy of the proof of Theorems \ref{thm1} and \ref{thm2}. Our approach is similar to the one of \cite{HW} as it combines CGO solutions to the quasi-periodic Laplace equation in $\check{\Omega}$ with a suitable Carleman estimate. Nevertheless, in contrast to \cite{CDR1, CDR2}, the Carleman estimate of \cite[Proposition 3.2]{KSU} is not adapted to the framework of this paper. This is due to the quasi-periodic boundary conditions imposed on the CGO solutions employed in the context of inverse periodic coefficients problems. Therefore, in view of taking the Neumann measurements on $G$ only, we shall rather use the Carleman estimate with linear weights introduced in \cite{BU}. 

\section{Proof of Proposition \ref{pr-a}}
\label{sec-pra}

In this section we prove the claim of Proposition \ref{pr-a}. As a preliminary we introduce in Subsection \ref{sec-to} the trace operators $\mathcal{T}_j$, $j=0,1$, and establish some useful properties that are needed for the proof of Proposition \ref{pr-a}, which can be found in Subsection \ref{sec-compproofpr-a}.

\subsection{The trace operators}
\label{sec-to}

The rigorous definition of the trace operators $\mathcal{T}_j$, $j=0,1$, boils down to the coming lemma. Such a density result is rather classical for bounded domains (see e.g. \cite[Section 2, Theorem 6.4]{LM1}), but it has to be justified here since $\Omega$ is infinitely extended in the $x_1$ direction.

\begin{lemma}
\label{density} 
The space $C_0^\infty(\overline{\Omega})$ is dense in $H_\Delta(\Omega)$.
\end{lemma}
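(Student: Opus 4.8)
The plan is to prove that $C_0^\infty(\overline{\Omega})$ is dense in $H_\Delta(\Omega)$ by combining a cut-off argument in the unbounded direction $x_1$ with a standard interior-regularization argument. The main point to keep in mind is that the naive approach---mollifying directly---does not obviously preserve the property $\Delta u \in L^2$ up to the boundary, so some care is needed, but the boundary $\partial\omega$ being $C^2$ lets us reduce to a half-space situation locally.

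\medskip

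\noindent\textbf{Step 1: Truncation in $x_1$.} Let $u \in H_\Delta(\Omega)$. Pick $\chi \in C_0^\infty(\R)$ with $\chi \equiv 1$ on $[-1,1]$, $0 \leq \chi \leq 1$, and set $\chi_n(x_1) := \chi(x_1/n)$. Define $u_n(x) := \chi_n(x_1) u(x)$. Then $u_n \to u$ in $L^2(\Omega)$ by dominated convergence, and
$$ \Delta u_n = \chi_n \Delta u + 2 \chi_n'(x_1) \partial_{x_1} u + \chi_n''(x_1) u. $$
The first term converges to $\Delta u$ in $L^2(\Omega)$. The issue is that the middle term involves $\partial_{x_1} u$, which is not a priori in $L^2(\Omega)$ when $u \in H_\Delta(\Omega)$ only. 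However, $\|\chi_n'\|_{L^\infty} = O(n^{-1})$ and $\|\chi_n''\|_{L^\infty} = O(n^{-2})$, with supports in $\{n \leq |x_1| \leq 2n\}$; moreover $\partial_{x_1} u \in L^2_{loc}$ by interior elliptic regularity, but its $L^2$-norm over the annulus $\{n \le |x_1| \le 2n\}$ need not be controlled. To fix this, I would instead first regularize in $x_1$ alone: for $u \in H_\Delta(\Omega)$, convolution in $x_1$ with a mollifier $\rho_\delta(x_1)$ commutes with $\Delta$ and maps $H_\Delta(\Omega)$ into $\{u : \partial_{x_1}^k u \in L^2, \ k \in \N,\ \Delta u \in L^2\}$, after which the cut-off $\chi_n$ is harmless since now $\partial_{x_1} u$ (of the regularized function) is genuinely in $L^2(\Omega)$. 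This yields density of compactly-in-$x_1$-supported elements of $H_\Delta(\Omega)$.

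\medskip

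\noindent\textbf{Step 2: Reduction to a bounded domain and classical density.} Once we have reduced to $u \in H_\Delta(\Omega)$ supported in $\{|x_1| \leq R\}$ for some $R$, we are essentially working in the bounded cylinder $\Omega_R := (-R-1, R+1) \times \omega$. On such a bounded Lipschitz (indeed $C^2$) domain, the density of $C^\infty(\overline{\Omega_R})$ in $H_\Delta(\Omega_R)$ is classical; this is exactly \cite[Section 2, Theorem 6.4]{LM1} (the domain $\Omega_R$ has corners along $\{x_1 = \pm(R+1)\} \times \partial\omega$, but since our functions vanish near those faces this is not an obstruction, and one may also apply the result on a smooth bounded domain containing the support). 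Approximating $u$ by $u^{(k)} \in C^\infty(\overline{\Omega_R})$ in the $H_\Delta$-norm and then multiplying by a fixed cut-off $\psi(x_1) \in C_0^\infty(\R)$ equal to $1$ on $\{|x_1| \leq R\}$ produces functions in $C_0^\infty(\overline{\Omega})$ converging to $u$ in $H_\Delta(\Omega)$.

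\medskip

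\noindent\textbf{Main obstacle.} The delicate point is Step 1: ensuring that the truncation in the unbounded variable does not destroy membership in $H_\Delta(\Omega)$, which fails if done carelessly because $H_\Delta$ does not control $\nabla u$ globally. The remedy---regularizing in $x_1$ first so that all $x_1$-derivatives become $L^2$, exploiting that $\partial_{x_1}$ commutes with $\Delta$ and with convolution in $x_1$---is the technical heart of the argument. Everything else reduces to the classical bounded-domain statement of Lions--Magenes.
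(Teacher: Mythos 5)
Your Step 1 is sound: mollifying in $x_1$ first (which commutes with $\Delta$ and makes every $\partial_{x_1}^k u$ genuinely $L^2$) and only then cutting off is a correct way to reduce to functions with compact support in $x_1$, and you correctly diagnosed why the naive cutoff fails. But the same pitfall resurfaces, unrepaired, in Step 2. After approximating the (compactly supported) function $u$ by $u^{(k)}\in C^\infty(\overline{\Omega_R})$ in the $H_\Delta(\Omega_R)$-norm, you multiply by $\psi(x_1)$ and claim convergence in $H_\Delta(\Omega)$. The error contains the term $2\psi'(x_1)\,\partial_{x_1}u^{(k)}$ supported in the transition region $\{R<|x_1|<R+1\}\times\omega$, where $u\equiv 0$; you would need $\partial_{x_1}u^{(k)}\to 0$ in $L^2$ there. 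Convergence of $u^{(k)}$ and $\Delta u^{(k)}$ in $L^2$ of a neighbourhood does \emph{not} give this, because the transition region touches the lateral boundary $\Gamma$ and there is no boundary condition, hence no Caccioppoli-type estimate up to $\Gamma$: e.g. in a half-space $\{x_2<0\}$ the harmonic functions $v_N=N^{-1/2}e^{Nx_2}\sin(Nx_1)$ have $\|v_N\|_{L^2}+\|\Delta v_N\|_{L^2}\to 0$ on a unit box while $\|\nabla v_N\|_{L^2}$ stays of order one. So the graph norm of $\Delta$ does not control the gradient of the approximants where you need it, and the step as written fails. A repair would require either approximants with controlled gradients in the transition zone (which the cited bounded-domain theorem does not provide) or a restructuring of the localization.

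For comparison, the paper avoids localization entirely and argues by duality: take $f\in H_\Delta(\Omega)'$ vanishing on $C_0^\infty(\overline{\Omega})$, represent it via Hahn--Banach and Riesz as $u\mapsto \langle g_1,u\rangle+\langle g_2,\Delta u\rangle$ with $g_1,g_2\in L^2(\Omega)$, deduce from the vanishing on test functions that $-\Delta g_2=g_1$ in $\R^3$ after extension by zero, conclude by elliptic regularity that $g_2\in H^2$ and hence $g_2\in H_0^2(\Omega)$, and integrate by parts to get $f=0$. This route transfers all the delicate boundary behaviour onto the fixed pair $(g_1,g_2)$, for which full elliptic regularity is available, rather than onto a sequence of approximants for which it is not. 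If you want to keep a constructive proof, you would essentially have to import this duality argument anyway (it is how the bounded-domain result of Lions--Magenes is proved), so the direct approach buys little here.
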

\begin{proof} 
Let $f \in H_\Delta(\Omega)'$, the space of linear continuous forms on $H_\Delta(\Omega)$, satisfy
\bel{d1}
\langle f , w \rangle_{H_\Delta(\Omega)',H_\Delta(\Omega)}=0,\ w \in C_0^\infty(\overline{\Omega}).
\ee
%where $\langle \cdot , \cdot \rangle$ denotes the duality pairing between $H_\Delta(\Omega)'$ and $H_\Delta(\Omega)$.
In order to establish the claim of Lemma \ref{density}, it is enough to show that $f$ is identically zero.

To do that we put 
$\jmath : u \in H_\Delta(\Omega) \mapsto (u , \Delta u) \in L^2(\Omega)^2$, notice that $H_\Delta(\Omega)$ is isometrically isomorphic to the closed subspace $Y:=\jmath ( H_\Delta(\Omega) )$ of $L^2(\Omega)^2$, and introduce the following linear continuous form $g$ on $Y$:
$$ \langle g , v \rangle_{Y',Y} := \langle f , \jmath^{-1} v \rangle_{H_\Delta(\Omega)', H_\Delta(\Omega)},\ v \in Y. $$ 
Here and henceforth, $Y'$ denotes the space dual to $Y$ and $\langle \cdot , \cdot \rangle_{Y',Y}$ stands for the duality pairing between $Y'$ and $Y$.
Since $g$ can be extended by Hahn Banach theorem to a linear continuous form $\tilde{g}$ on $L^2(\Omega)^2$, we may find $(g_1,g_2) \in L^2(\Omega)^2$ such that
\bel{d2}
\langle f , u \rangle_{H_\Delta(\Omega)',H_\Delta(\Omega)} = \langle \tilde{g} , \jmath u \rangle_{L^2(\Omega)^2} = \langle g_1 , u \rangle_{L^2(\Omega)} + \langle g_2 , \Delta u \rangle_{L^2(\Omega)},\ u \in H_\Delta(\Omega),
\ee
according to Riesz representation theorem. Upon extending $g_1$ and $g_2$ by zero outside $\Omega$, we deduce from \eqref{d2} that
$$
\int_{\R^3} (g_1 \overline{w} + g_2 \overline{\Delta w} ) dx = 0,\ w \in C_0^\infty(\R^3),
$$
whence 
\bel{d3}
-\Delta g_2 = g_1\ \mbox{in}\ \R^3.
\ee 
Since $g_1 \in L^2(\Omega)$, then \eqref{d3} yields that $g_2 \in H^2(\Omega)$ by the classical elliptic regularity property. Further, as $g_2$ vanishes in $\R^2 \setminus \overline{\Omega}$, we get that $g_2 \in H_0^2(\Omega)$, the closure of $C_0^{\infty}(\Omega)$ in the topology of the second-order Sobolev space $H^2(\Omega)$. As a consequence we have
$$ \langle g_2 , \Delta u \rangle_{L^2(\Omega)} = \langle \Delta g_2 , u \rangle_{L^2(\Omega)} = - \langle g_1 , u \rangle_{L^2(\Omega)}, \ u \in H_\Delta(\Omega), $$
by \eqref{d3}. In view of \eqref{d2} this entails that $f=0$, hence the result.
\end{proof}

Armed with Lemma \ref{density}, we now define the trace maps $\mathcal T_j$, $j=0,1$, on the space $H_\Delta(\Omega)$, in the following manner.

\begin{lemma}
\label{trace}
The mapping $w \in C_0^\infty(\overline{\Omega}) \mapsto w_{\vert \pd \Omega}$ (resp., $w \in C_0^\infty(\overline{\Omega}) \mapsto \pd_\nu w_{\vert \pd \Omega}$) can be extended over $H_\Delta(\Omega)$ to a bounded operator $\mathcal T_0 : H_\Delta(\Omega) \to \cH^{-2,-\frac{1}{2}}(\Gamma)$ (resp., $\mathcal T_1 : H_\Delta(\Omega) \to \cH^{-2,-\frac{3}{2}}(\Gamma)$).
%Moreover $\mathcal T_0$ is bijective from $B$ onto $H^{-2}(\R; H^{-\frac{1}{2}}(\pd \omega))$.
\end{lemma}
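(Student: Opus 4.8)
The plan is to mimic the classical construction of the Laplace trace operator on $H_\Delta$ for bounded domains (Lions--Magenes), taking care that the infinite extent in $x_1$ forces the target spaces to be anisotropic Sobolev spaces of negative order in \emph{both} variables. The natural tool is a Green-type formula: for $u,\phi\in C_0^\infty(\overline\Omega)$,
\begin{equation*}
\int_\Omega (\Delta u\,\overline\phi - u\,\overline{\Delta\phi})\,dx = \int_\Gamma \left( \pd_\nu u\,\overline\phi - u\,\overline{\pd_\nu\phi}\right)d\sigma .
\end{equation*}
The idea is to fix boundary data $\psi$ on $\Gamma$, solve an auxiliary Dirichlet (resp.\ Neumann) problem on $\Omega$ with a right-hand side that makes the above pairing isolate $\mathcal T_1 u$ (resp.\ $\mathcal T_0 u$), and use elliptic estimates for that auxiliary problem to control $|\langle \mathcal T_j u,\psi\rangle|$ by $\norm{u}_{H_\Delta(\Omega)}\,\norm{\psi}$ in the appropriate positive-order space. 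By Lemma \ref{density} this extends $\mathcal T_j$ from $C_0^\infty(\overline\Omega)$ to all of $H_\Delta(\Omega)$.

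Concretely, for $\mathcal T_0$ I would take $\psi\in C_0^\infty(\Gamma)$, use a continuous lifting $E\psi\in \cH^{2,\frac32}(\Omega)$ with $E\psi_{|\Gamma}=\psi$ and $\pd_\nu(E\psi)_{|\Gamma}=0$ (such a lifting exists by the anisotropic trace theory on $\R\times\omega$, obtained by tensorizing the bounded-cross-section trace operator with the identity in $x_1$, i.e.\ applying the partial Fourier transform in $x_1$ and using the $H^s(\omega)$ trace theorem uniformly in the dual variable $\xi_1$), and write
\begin{equation*}
\langle \mathcal T_0 u,\pd_\nu \overline{(E\psi)}\rangle \;=\; \int_\Omega\left( u\,\overline{\Delta(E\psi)} - \Delta u\,\overline{E\psi}\right)dx,
\end{equation*}
wait --- more cleanly, choose the lifting so that only the $\mathcal T_0 u$ term survives; then bound the right-hand side by $\norm{u}_{H_\Delta(\Omega)}\big(\norm{E\psi}_{L^2}+\norm{\Delta E\psi}_{L^2}\big)\lesssim \norm{u}_{H_\Delta(\Omega)}\norm{\psi}_{\cH^{2,\frac32}(\Gamma)}$, which gives $\mathcal T_0u\in\cH^{-2,-\frac12}(\Gamma)$ after recalling the duality identification of negative-order anisotropic spaces stated in the Notations subsection. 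For $\mathcal T_1$ one argues symmetrically, using a lifting with $E\psi_{|\Gamma}=0$, $\pd_\nu(E\psi)_{|\Gamma}=\psi$, and $\psi\in \cH^{2,\frac32}(\Gamma)$ is now the object paired against $\mathcal T_1u$; since a normal-derivative lifting of $\psi$ costs one extra tangential derivative, one needs $\psi\in\cH^{2,\frac32}(\Gamma)$ to land $E\psi\in\cH^{2,\frac12}(\Omega)\subset H_\Delta(\Omega)$-dual-compatibly, and hence $\mathcal T_1u\in\cH^{-2,-\frac32}(\Gamma)$. Boundedness in the $x_1$-regularity index $-2$ comes from the two factors of $(1+\xi_1^2)^{-1}$ one extracts from $\Delta$ in Fourier in $x_1$, consistently with the quoted $\cH^{-2,\cdot}$ targets.

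The estimates and the Green identity are routine once the liftings exist, so the main obstacle is producing those right-inverses of the Dirichlet/Neumann trace pair on the unbounded boundary $\Gamma=\R\times\pd\omega$ with norms controlled in the \emph{anisotropic} scale $\cH^{r,s}$ rather than the isotropic $H^r$ scale. The clean way is the partial Fourier transform $\mathcal F_{x_1}$: it turns the problem into a $\xi_1$-indexed family of trace/lifting problems on the \emph{bounded} cross section $\omega$ with $C^2$ boundary, for which \cite[Section 2, Theorem 6.5]{LM1} provides liftings; one then checks that the operator norms of these liftings are uniform in $\xi_1$ (they are, since the relevant constructions on $\omega$ use only the geometry of $\pd\omega$ and standard Sobolev extension), and reassembles via Plancherel. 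A secondary point requiring care is that $C_0^\infty(\overline\Omega)$ is the right dense class for this Fourier-slicing argument --- its elements have compact $x_1$-support, so $\mathcal F_{x_1}$ of such a function is entire in $\xi_1$ and all manipulations are legitimate --- and then density (Lemma \ref{density}) closes the extension.
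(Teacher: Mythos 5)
Your proposal is correct and follows essentially the same route as the paper: a Green identity pairing $\mathcal T_j u$ against a lifting built by applying the cross-sectional $H^2(\omega)\to H^{\frac32}(\pd\omega)\times H^{\frac12}(\pd\omega)$ trace right-inverse fiberwise in $x_1$ (the paper does this directly slice-by-slice rather than via $\mathcal F_{x_1}$, which avoids any uniformity-in-$\xi_1$ discussion), followed by duality and Lemma \ref{density}. One slip to fix in the write-up: the liftings are assigned backwards --- to isolate $\mathcal T_0 u$ you need $E\psi_{|\Gamma}=0$, $\pd_\nu(E\psi)_{|\Gamma}=\psi$ with $\psi\in\cH^{2,\frac12}(\Gamma)$, and to isolate $\mathcal T_1 u$ you need $E\psi_{|\Gamma}=\psi$, $\pd_\nu(E\psi)_{|\Gamma}=0$ with $\psi\in\cH^{2,\frac32}(\Gamma)$, which is also what makes the exponents $-\frac12$ and $-\frac32$ come out right.
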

\begin{proof} It is well known that $u \in C^{\infty}(\overline{\omega}) \mapsto (u_{\vert \pd \omega}, {\pd_\nu u}_{\vert \pd \omega})$ extends continuously to a bounded operator from $H^2(\omega)$ onto $H^{\frac{3}{2}}(\pd \omega) \times H^{\frac{1}{2}}(\pd \omega)$, so there exists $L : H^{\frac{3}{2}}(\pd \omega) \times H^{\frac{1}{2}}(\pd \omega) \to H^2(\omega)$, linear and bounded, such that 
$$
L(h_1,h_2)_{\vert \pd \omega}=h_1,\ {\pd_\nu L(h_1,h_2)}_{\vert \pd \omega}=h_2,\ (h_1,h_2) \in H^{\frac{3}{2}}(\pd \omega) \times H^{\frac{1}{2}}(\pd \omega).
$$
Let us define the operator $\mathcal L : \cH^{2,\frac{3}{2}}(\Gamma) \times \cH^{2,\frac{1}{2}}(\Gamma) \to H^2(\Omega)$, by setting for a.e. $(x_1,x') \in \Omega$,
$$  \mathcal L(h_1,h_2) (x_1,x') := L \left( h_1(x_1,\cdot),h_2(x_1,\cdot) \right) (x'),\ (h_1,h_2) \in \cH^{2,\frac{3}{2}}(\Gamma) \times \cH^{2,\frac{1}{2}}(\Gamma). $$
Using that $\| f \|_{H^2(\Omega)}^2=\sum_{k=0}^2 \| \pd_{x_1}^k f \|_{L^2(\R,H^{2-k}(\omega))}^2$, it is easy to see that $\mathcal L$ is bounded, and we check for every $(h_1,h_2) \in \cH^{2,\frac{3}{2}}(\Gamma) \times H^{2,\frac{1}{2}}(\Gamma)$ that
$$ \mathcal L(h_1,h_2)_{\vert \pd \Omega}=h_1,\  {\pd_\nu L(h_1,h_2)}_{\vert\pd \Omega}=h_2. $$

For $h \in \cH^{2,\frac{1}{2}}(\Gamma)$, we put $w:=\mathcal L(0,h)$ in such a way that $w \in H^2(\Omega)$ satisfies
\bel{trace1}
w_{\vert \pd \Omega}=0,\ \pd_\nu w_{\vert \pd \Omega}=h\ \mbox{and}\ \norm{w}_{H^2(\Omega)} \leq C \norm{h}_{\cH^{2,\frac{1}{2}}(\Gamma)}, 
\ee
the constant $C>0$ being independent of $h$. Next, for notational simplicity, we denote by $\mathcal T_0 v$ the trace $v_{| \pd \Omega}$ of any function $v \in C_0^\infty(\overline{\Omega})$. Then, applying Green formula twice with respect to $x' \in \omega$, and integrating by parts with respect to $x_1$ over $\R$, we find that
$$ \int_{\pd \Omega} (\mathcal T_0 v) \overline{h} d\sigma(x) = \langle v , \Delta w \rangle_{L^2(\Omega)} - \langle \Delta v , w \rangle_{L^2(\Omega)},\ v \in C_0^\infty(\overline{\Omega}). $$
Therefore, we get $\abs{\int_{\pd \Omega} (\mathcal T_0 v) \overline{h} d\sigma(x)} \leq 2 \norm{v}_{H_\Delta(\Omega)} \norm{w}_{H^2(\Omega)}$ by Cauchy-Schwarz inequality, and hence
$$ \abs{\langle \mathcal T_0 v , h \rangle_{\cH^{-2,-\frac{1}{2}}(\Gamma) , \cH^{2,\frac{1}{2}}(\Gamma)}} \leq 2C \norm{v}_{H_\Delta(\Omega)} \norm{h}_{\cH^{2,\frac{1}{2}}(\Gamma)}, $$
with the help of \eqref{trace1}. Since $h$ is arbitrary in $\cH^{2,\frac{1}{2}}(\Gamma)$, this entails that
$$ \norm{\mathcal T_0 v}_{\cH^{-2,-\frac{1}{2}}(\Gamma)}\leq 2 C\norm{v}_{H_\Delta(\Omega)},\ v \in C_0^\infty(\overline{\Omega}), $$
which together with Lemma \ref{density}, proves that $\mathcal T_0$ can be extended over $H_\Delta(\Omega)$ to a bounded operator into $\cH^{-2,-\frac{1}{2}}(\Gamma)$. 

To prove the second part of the claim, we pick $g\in \cH^{2,\frac{3}{2}}(\Gamma)$ and set $w:=\mathcal L(g,0)$, so we have $w \in H^2(\Omega)$ and
\bel{trace2}
w_{\vert \pd \Omega}=g,\ {\pd_\nu w}_{\vert \pd \Omega}=0\ \mbox{and} \norm{w}_{H^2(\Omega)} \leq C \norm{g}_{\cH^{2,\frac{3}{2}}(\Gamma)},
\ee
for some constant $C>0$ that is independent of $w$. Next we denote by $\mathcal T_1 v$ the trace $\pd_\nu v_{| \pd \Omega}$ of any $v \in C_0^\infty(\overline{\Omega})$ on $\pd \Omega$. Then, arguing as before, we find that the following estimate
$$
\abs{\langle \mathcal T_1 v , g \rangle_{\cH^{-2,-\frac{3}{2}}(\Gamma),\cH^{2,\frac{3}{2}}(\Gamma)}} \leq 2C \norm{v}_{H_\Delta(\Omega)}\norm{g}_{\cH^{2,\frac{3}{2}}(\Gamma)}, 
$$
holds uniformly in $g \in \cH^{2,\frac{3}{2}}(\Gamma)$ and $v \in C_0^\infty(\overline{\Omega})$. This and Lemma \ref{density} yield that
$\mathcal T_1$ is extendable to a linear bounded operator from $H_\Delta(\Omega)$ into $\cH^{-2,-\frac{3}{2}}(\Gamma)$.
\end{proof}

We end this subsection by establishing the following result which was required by \eqref{nhg} to define the topology of the space $\sH (\Gamma)$.

\begin{lemma}
\label{p5} 
$\mathcal T_0$ is bijective from $B=\{ u \in L^2(\Omega);\ \Delta u = 0 \}$ onto $\cH^{-2,-\frac{1}{2}}(\Gamma)$.
\end{lemma}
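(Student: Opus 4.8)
The plan is to establish the two required facts separately: first that $\mathcal{T}_0$ maps $B$ into $\cH^{-2,-\frac{1}{2}}(\Gamma)$ injectively, and second that it is onto. Injectivity on $B$ is the heart of the matter; surjectivity will follow by exhibiting, for an arbitrary boundary datum, a harmonic $L^2$-function with that trace, via a lifting plus a correction step.

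For \emph{injectivity}, suppose $u \in B$ with $\mathcal{T}_0 u = 0$. I would argue that $u$ is a weak solution of $\Delta u = 0$ in $\Omega$ with vanishing Dirichlet trace, hence $u \in H_0^1$-type space and, pairing against test functions, $u = 0$. More precisely: given $u \in H_\Delta(\Omega)$ with $\Delta u = 0$ and $\mathcal{T}_0 u = 0$, one wants $\int_\Omega u\, \overline{\Delta \varphi}\, dx = 0$ for all $\varphi \in H^2(\Omega) \cap H_0^1(\Omega)$ (or a suitable dense class), since then $u$ is orthogonal to the range of $\Delta$ on functions vanishing on $\Gamma$, and by solvability of the Dirichlet problem for $-\Delta$ in $L^2(\Omega)$ (which holds because $0$ is in the resolvent set of the Dirichlet Laplacian on $\Omega = \R \times \omega$, its spectrum being contained in $[C_\omega^2,\infty)$ by the Poincar\'e inequality \eqref{inegp1}) this range is all of $L^2(\Omega)$, forcing $u = 0$. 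The identity $\int_\Omega u\, \overline{\Delta \varphi}\, dx = \langle \mathcal{T}_0 u, \pd_\nu \varphi\rangle - \langle \pd_\nu u \text{-term}\rangle + \langle \Delta u, \varphi\rangle$ is exactly the Green-type formula underlying the definition of $\mathcal{T}_0$ in Lemma \ref{trace}; with $\Delta u = 0$, $\mathcal{T}_0 u = 0$, and $\varphi_{|\Gamma} = 0$, all boundary contributions drop, which is the cleanest route. The main obstacle is making the duality pairings rigorous in the weighted anisotropic spaces $\cH^{-2,-\frac{1}{2}}(\Gamma)$: one must check that the Green formula extends from $C_0^\infty(\overline{\Omega})$ to all of $H_\Delta(\Omega)$ against the chosen lifts, which is precisely the content of the density Lemma \ref{density} together with the continuity estimates proved in Lemma \ref{trace}, so I would invoke those.

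For \emph{surjectivity}, fix $\psi \in \cH^{-2,-\frac{1}{2}}(\Gamma)$. I would first produce $w \in H_\Delta(\Omega)$ with $\mathcal{T}_0 w = \psi$: this is a standard lifting, dual to the statement in Lemma \ref{trace} that $\mathcal{T}_0$ is bounded and surjective onto the trace space from the full space $H_\Delta(\Omega)$ (one can also build it by hand, taking $w$ in a negative-order Sobolev scale as in the Lions--Magenes construction adapted to the cylinder). Then set $\Delta w =: h \in L^2(\Omega)$, and let $z \in H_\Delta(\Omega) \cap H_0^1(\Omega)$ solve the Dirichlet problem $-\Delta z = h$ in $\Omega$, $z_{|\Gamma} = 0$ — solvable, again, because $0 \notin \sigma(-\Delta_{\text{Dir}})$ on $\Omega$. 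Finally $u := w + z$ satisfies $\Delta u = h + \Delta z = 0$, so $u \in B$, and $\mathcal{T}_0 u = \mathcal{T}_0 w + \mathcal{T}_0 z = \psi + 0 = \psi$, using that $z \in H^2_{\text{loc}}$ with zero trace so $\mathcal{T}_0 z = 0$. This gives surjectivity. Combined with injectivity, $\mathcal{T}_0 : B \to \cH^{-2,-\frac{1}{2}}(\Gamma)$ is a bijection, and moreover the open mapping theorem shows its inverse is bounded, which is what makes the norm \eqref{nhg} well defined. The one delicate point I would flag is confirming $\mathcal{T}_0 z = 0$ for $z$ merely in $H_\Delta(\Omega) \cap H_0^1(\Omega)$: here one uses that such $z$ is a limit in $H_\Delta(\Omega)$ of $C_0^\infty(\Omega)$ functions — by elliptic regularity $z \in H^2_{\text{loc}}$ and a cutoff/approximation argument in the $x_1$-variable handles the unbounded direction — on which the trace vanishes identically, and the continuity of $\mathcal{T}_0$ from Lemma \ref{trace} passes this to the limit.
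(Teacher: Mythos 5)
Your overall architecture is the same as the paper's for surjectivity (lift the boundary datum to some $w\in H_\Delta(\Omega)$, then correct by the solution of a homogeneous Dirichlet problem so that the sum is harmonic), and your injectivity argument is a legitimate, more self-contained alternative: the paper simply invokes the uniqueness of the $L^2$-solution to $-\Delta w=0$, $w_{|\Gamma}=0$ from \cite[Lemma 2.4]{KPS2}, whereas you essentially re-prove that uniqueness by the transposition/duality argument (pairing $u$ against $\Delta\varphi$ for $\varphi$ in a dense class vanishing on $\Gamma$ and using that $0$ is in the resolvent set of the Dirichlet Laplacian). That part is fine, modulo the regularity bookkeeping you already flag (for the pairing $\langle \mathcal T_0 u,\pd_\nu\varphi\rangle$ to make sense one needs $\pd_\nu\varphi\in\cH^{2,\frac12}(\Gamma)$, so one should restrict to $\varphi$ with, say, $\Delta\varphi\in C_0^\infty(\Omega)$ and use translation invariance in $x_1$ to get the extra $x_1$-regularity; such $\Delta\varphi$ are still dense in $L^2(\Omega)$).

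The genuine gap is the first step of your surjectivity argument. You assert that every $\psi\in\cH^{-2,-\frac12}(\Gamma)$ admits a lift $w\in H_\Delta(\Omega)$ with $\mathcal T_0 w=\psi$, calling this ``dual to'' Lemma \ref{trace}. But Lemma \ref{trace} only gives \emph{boundedness} of $\mathcal T_0:H_\Delta(\Omega)\to\cH^{-2,-\frac12}(\Gamma)$, not surjectivity, and surjectivity onto this anisotropic space is in fact doubtful: a partial Fourier analysis in $x_1$ shows that the trace of an $H_\Delta(\Omega)$-function must decay in the dual frequency variable much faster than a generic element of $H^{-2}(\R;H^{-\frac12}(\pd\omega))$, so the range $\sH(\Gamma):=\mathcal T_0 H_\Delta(\Omega)$ is a proper subspace. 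This is why the paper defines $\sH(\Gamma)$ as the range and, in its proof (and in the surrounding text and in \eqref{nhg}), only establishes and only uses bijectivity of $\mathcal T_0$ from $B$ onto $\sH(\Gamma)$: starting from $f\in\sH(\Gamma)$, a preimage $u\in H_\Delta(\Omega)$ exists \emph{by definition}, and then $v$ solving $-\Delta v=\Delta u$, $v_{|\Gamma}=0$ gives the harmonic representative $u+v\in B$. The statement of the lemma with codomain $\cH^{-2,-\frac12}(\Gamma)$ is loosely worded; taken literally, your lifting step is the missing (and unprovable) ingredient, so you should replace the target space by $\sH(\Gamma)$, at which point your construction coincides with the paper's.
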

\begin{proof} 
Given $u$ and $v$ in $B$ obeying $\mathcal T_0 u = \mathcal T_0 v$, we see that $w:=u-v$ satisfies the system
\bel{ea5}
\left\{ \begin{array}{rcll} 
-\Delta w & = & 0, & \mbox{in}\ \Omega,\\ 
w & = & 0,& \mbox{on}\ \Gamma. \end{array} 
\right.
\ee
Therefore we have $w=0$, by uniqueness (see e.g. \cite[Lemma 2.4]{KPS2}) of the solution to \eqref{ea5}, proving that $\mathcal T_0$ is injective on $B$.

Further, we know from the definition of $\sH(\Gamma)$ that for any $f \in \sH(\Gamma)$, there exists $u \in H_\Delta(\Omega)$ such that $\mathcal T_0 u=f$. Next, since $\Delta u \in L^2(\Omega)$, then the BVP
$$
\left\{ \begin{array}{rcll} -\Delta v & = & \Delta u, & \mbox{in}\ \Omega,\\ 
v & = & 0,& \mbox{on}\ \Gamma, \end{array}
\right.
$$
admits a unique solution $v \in H^2(\Omega) \cap H^1_0(\Omega)$. Now, it is apparent that $w= u + v\in L^2(\Omega)$ satisfies $\Delta w=0$ and 
$\mathcal T_0 w =\mathcal T_0 u +\mathcal T_0 v=f$.
\end{proof}

\subsection{Completion of the proof}
\label{sec-compproofpr-a}
Firstly, we introduce $A_V$, the self-adjoint operator in $L^2(\Omega)$, generated by the closed quadratic form 
$$ u \mapsto a_V[u]:= \int_{\Omega} ( | \nabla u(x) |^2 + V(x) | u(x) |^2 ) dx,\ u \in D(a_V):=H_0^1(\Omega). $$ 
We recall from \cite[Lemma 2.2]{CKS} that $A_V$ acts as $-\Delta+V$ on its domain $D(A_V)= H^1_0(\Omega) \cap H^2(\Omega)$.
Moreover, since $V \in \mathscr{V}_\omega(M_\pm)$ we have
$$ a_V[u] \geq (C_\omega-M_-) \| u \|_{L^2(\Omega)}^2,\ u \in H_0^1(\Omega), $$
by \eqref{p1const}-\eqref{inegp1}, hence $A_V$ is boundedly invertible in $L^2(\Omega)$ and
\bel{binv}
\| A_V^{-1} \|_{\B (L^2(\Omega))} \leq \frac{1}{C_\omega-M_-}.
\ee

In order to establish the first statement of Proposition \ref{pr-a}, i.e. (i), we notice that $v$ is solution to \eqref{eq1} if and only if
$u:=v-\mathcal{T}_0^{-1} f$ solves the system
\bel{eq1b}
\left\{ 
\begin{array}{rcll} 
(-\Delta + V) u & = & -V \mathcal T_0^{-1} f & \mbox{in}\ \Omega,\\ 
u & = & 0 & \mbox{on}\ \Gamma.
\end{array}
\right.
\ee
Since $V \mathcal T_0^{-1}f \in L^2(\Omega)$ and $A_V$ is boundedly invertible in $L^2(\Omega)$, then 
$u:=-A_V^{-1} V \mathcal T_0^{-1} f$ is the unique solution to \eqref{eq1b}. As a consequence we have
\bel{p6b}
\norm{u}_{L^2(\Omega)} \leq M_+ \norm{A_V^{-1}}_{\B (L^2(\Omega))} \norm{\mathcal T_0^{-1} f}_{L^2(\Omega)}.
\ee
Evidently $v:=u+\mathcal T_0^{-1} f$ is the unique solution to \eqref{eq1}, and \eqref{p6a} follows readily from this, \eqref{nhg}, \eqref{binv} and \eqref{p6b}. 

We turn now to proving (ii)-(iii). For $f \in \sH(\Gamma)$ fixed, we still denote by $v$ the solution to \eqref{eq1} associated with $f$. Since $v \in L^2(\Omega)$ and $\Delta v= V v \in L^2(\Omega)$ it holds true that $v \in H_\Delta(\Omega)$ and that
$$ \norm{v}_{H_\Delta(\Omega)}^2=\norm{v}_{L^2(\Omega)}^2+\norm{V v}_{L^2(\Omega)}^2 \leq (1+M_+^2)\norm{v}_{L^2(\Omega)}^2. $$
From the continuity of $\mathcal T_1 : H_\Delta(\Omega) \to \cH^{-2,-\frac{3}{2}}(\Gamma)$, it then follows that
$$\norm{\mathcal T_1 v}^2_{\cH^{-2,-\frac{3}{2}}(\Gamma)} \leq (1+M_+^2) \| \mathcal T_1 \|_{\mathcal{B}(H_\Delta(\Omega), \cH^{-2,-\frac{3}{2}}(\Gamma))}^2 \norm{v}_{L^2(\Omega)}^2. $$ 
This and \eqref{p6a} yield that the DN map $\Lambda_V$ is bounded from $\sH(\Gamma)$ into $\cH^{-2,-\frac{3}{2}}(G)$.

Let us now establish (iii). We denote by $w$ the solution to \eqref{eq1} where $W$ is substituted for $V$, and notice that
$u:=v-w$ is solution to the BVP
$$
\left\{ 
\begin{array}{rcll} 
(-\Delta + V) u & = & (W-V) w & \mbox{in}\ \Omega\\ 
u & = & 0 & \mbox{on}\ \Gamma.
\end{array}
\right.
$$
Since $(W-V) w \in L^2(\Omega)$ and $0$ is in the resolvent set of $A_V$, we have 
\bel{ea0}
u=A_V^{-1} (W-V) w,
\ee
whence $u \in D(A_V) = H^2(\Omega) \cap H^1_0(\Omega)$. As the usual norm in $H^2(\Omega)$ is equivalent to the one associated with the domain of $A_V$, by \cite[Lemma 2.2]{CKS}, we have
$$
\norm{u}_{H^2(\Omega)} 
%\leq C \| u \|_{H_\Delta(\Omega)} 
\leq C \left( \norm{A_{V} u}_{L^2(\Omega)} + \| u \| _{L^2(\Omega)} \right), 
$$
for some constant $C>0$, depending only on $\omega$ and $M_+$. This, \eqref{binv} and \eqref{ea0} yield that
\bel{ea1}
\norm{u}_{H^2(\Omega)} \leq C \left( 1 + \| A_V^{-1} \|_{\B(L^2(\Omega))} \right) \norm{(W-V) w}_{L^2(\Omega)} \leq 2 M_+ C \left( 1 + \frac{1}{C_\omega-M_-} \right) \norm{w}_{L^2(\Omega)}.
\ee
Bearing in mind that $\| w \|_{L^2(\Omega)} \leq c \| f \|_{\sH (\Gamma)}$, by \eqref{p6a}, and using the continuity of the trace operator $u \mapsto {\pd_\nu u}_{| \Gamma}$ from $H^2(\Omega)$ into $L^2(\Gamma)$, we deduce from \eqref{ea1} that
$$ \norm{ \pd_\nu u}_{ L^2(\Gamma)} \leq C \norm{u}_{H^2(\Omega)} \leq C \norm{f}_{\sH(\Gamma)}, $$
where $C$ is another positive constant depending only on $\omega$ and $M_\pm$. Now, the desired result follows from this and the identity
$\pd_\nu u_{|G}=\mathcal T_1 u_{|G}={\mathcal T_1 v}_{|G}-{\mathcal T_1 w}_{|G}=(\Lambda_{V}-\Lambda_{W})f$.

\section{Fiber decomposition}
\label{sec-dec}

In this section we decompose \eqref{eq1} into the family of BVP \eqref{eq2} indexed by $\theta \in [0, 2 \pi)$. This is by means of the FBG transform, introduced in Subsection \ref{sec-fbg}, which decomposes the operator $A_V$. In Subsection \ref{sec-fib} we examine the direct problem associated with \eqref{eq2} for each $\theta \in [0, 2 \pi)$ and study the corresponding DN map $\Lambda_{V,\theta}$. Finally, in Subsection \ref{sec-lin} we reformulate the inverse problem under consideration as to whether $V$ can be stably retrieved from partial knowledge of $\Lambda_{V,\theta}$. We state in Theorem \ref{thm2} that this is actually the case provided the Neumann data are measured on $\check{G}:=(0,1) \times G'$.

\subsection{FBG transform and fiber decomposition of $A_V$}   
\label{sec-fbg}
Let $Y$ be either $\omega$ or $\pd \omega$.
The main tool for the analysis of 1-periodic waveguides $\R \times Y$ is the partial FBG transform defined for every 
$f \in C_0^\infty (\R \times Y)$ as
\bel{c1}
(\mathcal{U}_Y f)_{\theta} (x_1,y):=\sum_{k \in \Z} e^{-i k \theta} f(x_1+k,y),\ (x_1,y) \in \R \times Y,\ \theta \in [0,2 \pi).
\ee
For notational simplicity, we systematically drop the $Y$ in $\mathcal{U}_Y$ and write $\mathcal{U}$ instead of $\mathcal{U}_Y$.
In view of \cite[Section XIII.16]{RS2}, the above operator can be extended to a unitary operator, still denoted by $\mathcal{U}$, from $L^2(\R \times Y)$ onto the Hilbert space 
defined as the direct integral sum $\int_{(0,2 \pi)}^\oplus L^2((0,1) \times Y) \frac{d \theta}{2 \pi}$. Since $\check{\Omega}=(0,1) \times \omega$ (resp., $\check{\Gamma}=(0,1) \times \pd \omega$), then $\mathcal{U}$ maps $L^2(\Omega)$ onto $\int_{(0,2 \pi)}^\oplus L^2(\check{\Omega}) \frac{d \theta}{2 \pi}$ if $Y=\omega$, (resp., $L^2(\pd \Omega)$ onto $\int_{(0,2 \pi)}^\oplus L^2(\check{\Gamma}) \frac{d \theta}{2 \pi}$ if $Y=\pd \omega$). 

Let $V \in L^\infty(\Omega;\R)$ satisfy \eqref{eq-per}. Since $V$ is $1$-periodic with respect to $x_1$, the operator $A_V$ defined in Section \ref{sec-intro}, is decomposed by $\mathcal{U}$. To make this claim more precise, we first introduce the following functional spaces.
For $\theta \in [0,2\pi)$ fixed, we put with reference to \cite[Section 6.1]{CKS}, 
$$
\cH_{\theta}^s((0,1)\times Y) := \left\{ u \in H^s((0,1)\times Y);\ \pd_{x_1}^j u(1,\cdot) - e^{i\theta} \pd_{x_1}^j u(0,\cdot) =0,\ j < s-\frac{1}{2} \right\}\ \mbox{if}\ s>\frac{1}{2},$$
and 
$$ \cH_{\theta}^s((0,1)\times Y) := H^s((0,1)\times Y)\ \mbox{if}\ s \in \left[0,\frac{1}{2} \right]. $$ 
Evidently, we shall write $\cH_{\theta}^s(\check{\Omega})$ (resp., $\cH_{\theta}^s(\check{\Gamma})$) instead of $\cH_{\theta}^s((0,1)\times Y)$ for $Y=\omega$ (resp., $Y=\pd \omega$).

Next we introduce the operator
$$ \mathcal A_{V,\theta}:=-\Delta + V,\ D(\mathcal A_{V,\theta}):=\mathcal U D(A_V) = \cH_{\theta}^2(\check{\Omega}) \cap L^2(0,1;H_0^1(\omega)), $$
self-adjoint in $L^2(\check{\Omega})$.
Then, taking into account that
$$
(\mathcal U \psi)_\theta(x_1+1,x')=e^{i \theta} ( \mathcal U \psi)_\theta(x_1,x'),\ x_1 \in \R,\ x' \in \omega,\ \theta \in [0,2 \pi),
$$
and that
\bel{c3}
\left( \mathcal U \frac{\pd^m \psi}{\pd x_j^m} \right)_\theta=\frac{\pd^m (\mathcal U \psi)_\theta}{\pd x_j^m},\ j=1,2,3,\ m \in \N^*,\ \theta \in [0,2 \pi),
\ee
%$\mathcal U$ commutes with the partial differentiation with respect to $x_j$, $j=1,2,3$, 
we find that $(\mathcal U A_V \psi)_\theta = \mathcal A_{V,\theta} ( \mathcal U \psi )_\theta$ for any $\psi \in D(A_V)$, i.e.
\bel{ec1}
\mathcal U A_V \mathcal U^{-1}= \int_{(0,2 \pi)}^\oplus \mathcal A_{V,\theta} \frac{d \theta}{2 \pi}.
\ee
%the embedding $D(\mathcal A_{V,\theta}) \subset \cH_{\theta}^2(\check{\Omega})$ is continuous.

Having seen this we turn now to analysing the direct problem associated with \eqref{eq2}.

\subsection{Analysis of the fibered problems}
\label{sec-fib}
For $\theta \in [0, \pi)$ and $n \in \N \cup \{ \infty \}$, we set
$$ \cC_{\theta}^n \left( [0,1]\times\overline{\omega}\right):=\{u \in \cC^n\left([0,1] \times \overline{\omega}\right);\ \pd_{x_1}^j u(1,\cdot) - e^{i\theta} \pd_{x_1}^j u(0,\cdot) =0\ \mbox{for all}\ j \in \N\ \mbox{such that}\ j \leq n\}.$$
%Note that for $s>0$ this definition of the space $H^{s}_\theta(0,2\pi;X)$ is equivalent to the one introduced at the beginning of this subsection.
A slight modification of the proof of \cite[Section 2, Theorem 6.4]{LM1} shows that $C^\infty\left([0,1]\times\overline{\omega}\right)$ is dense in $H_\Delta(\check{\Omega}):=\{u\in L^2(\check{\Omega});\ \Delta u\in L^2(\check{\Omega})\}$ endowed with the norm
$\norm{u}_{H_\Delta(\check{\Omega})}:=\left( \norm{u}_{L^2(\check{\Omega})}^2+\norm{\Delta u}_{L^2(\check{\Omega})}^2 \right)^\frac{1}{2}$.
%In fact in the proof of Theorem 6.4 in chapter 2 of \cite{LM1} it is not necessary to consider a smooth domain $\check{\Omega}$ but a smooth domain $\mathcal O\supset\check{\Omega}$.
%Using this density result we define the trace maps $v\mapsto\partial_{x_1}^jv_{|x_1=0}$, $v\mapsto\partial_{x_1}^jv_{|x_1=2\pi}$, $j=0,1$, in the following way. 
\begin{lemma}
\label{np} 
For $j=0$ and $j=1$, each of the two following mappings 
$$ u \mapsto \pd_{x_1}^j u(0,\cdot)\ \mbox{and}\ u \mapsto \pd_{x_1}^j u(1,\cdot),\ u \in C^\infty \left( [0,1] \times \overline{\omega} \right), 
$$
can be extended to a bounded operator from $H_\Delta(\check{\Omega})$ into $H^{-2}(\omega)$.
\end{lemma}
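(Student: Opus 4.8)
The plan is to mimic the argument used for Lemma \ref{trace}, with the finite cylinder $\check{\Omega}=(0,1)\times\omega$ in place of $\Omega$ and with the two cross-sectional ends $\{0\}\times\omega$ and $\{1\}\times\omega$ playing the role previously held by the lateral boundary $\Gamma$. Since $H^{-2}(\omega)$ is the space dual to $H_0^2(\omega)$ with $L^2(\omega)$ as pivot, and since $C_0^\infty(\omega)$ is dense in $H_0^2(\omega)$ while $C^\infty([0,1]\times\overline{\omega})$ is dense in $H_\Delta(\check{\Omega})$ (this being the density statement recalled just above the lemma), it is enough to estimate, for each of the four functionals $u\mapsto\pd_{x_1}^j u(a,\cdot)$ with $a\in\{0,1\}$ and $j\in\{0,1\}$, the quantity $\big|\int_\omega \pd_{x_1}^j u(a,\cdot)\,\overline{\psi}\,dx'\big|$ for $u\in C^\infty([0,1]\times\overline{\omega})$ and $\psi\in C_0^\infty(\omega)$, and then pass to the supremum over $\psi$ in the unit ball of $H_0^2(\omega)$ and extend by density.

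To carry out this estimate, first I would fix $\psi\in C_0^\infty(\omega)$ together with a polynomial $\chi\in C^\infty([0,1])$ to be specified, and set $w(x_1,x'):=\chi(x_1)\psi(x')$, so that $w\in H^2(\check{\Omega})$ with $w$ and $\pd_\nu w$ both vanishing on $\check{\Gamma}=(0,1)\times\pd\omega$ (the latter because $\psi$ and its first-order derivatives vanish on $\pd\omega$). Applying Green's second identity on $\check{\Omega}$, the contribution of $\check{\Gamma}$ drops out and only the two ends survive, giving
\begin{align*}
\int_{\check{\Omega}}\big((\Delta u)\,\overline{w}-u\,\overline{\Delta w}\big)\,dx
&=\int_\omega\Big(\chi(1)\,\pd_{x_1}u(1,\cdot)-\chi'(1)\,u(1,\cdot)\\
&\qquad{}-\chi(0)\,\pd_{x_1}u(0,\cdot)+\chi'(0)\,u(0,\cdot)\Big)\overline{\psi}\,dx'.
\end{align*}
For each of the four functionals one then chooses $\chi$ by Hermite interpolation of the data $(\chi(0),\chi'(0),\chi(1),\chi'(1))$ so that exactly one boundary term survives on the right-hand side: for instance $\chi(x_1)=x_1(1-x_1)^2$ isolates $\int_\omega u(0,\cdot)\overline{\psi}\,dx'$, $\chi(x_1)=x_1^2(3-2x_1)$ isolates $\int_\omega\pd_{x_1}u(1,\cdot)\overline{\psi}\,dx'$, and the remaining two cases (for $u(1,\cdot)$ and $\pd_{x_1}u(0,\cdot)$) are handled by the corresponding mirror-image cubics. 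Bounding the left-hand side by $\|\Delta u\|_{L^2(\check{\Omega})}\|w\|_{L^2(\check{\Omega})}+\|u\|_{L^2(\check{\Omega})}\|\Delta w\|_{L^2(\check{\Omega})}$ via Cauchy--Schwarz, and noting that $\Delta w=\chi''\psi+\chi\Delta'\psi$ yields $\|w\|_{L^2(\check{\Omega})}+\|\Delta w\|_{L^2(\check{\Omega})}\le C\|\psi\|_{H^2(\omega)}$ with $C$ depending only on the (fixed) choice of $\chi$, we get $\big|\int_\omega\pd_{x_1}^j u(a,\cdot)\overline{\psi}\,dx'\big|\le C\|u\|_{H_\Delta(\check{\Omega})}\|\psi\|_{H_0^2(\omega)}$. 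Taking the supremum over $\psi$ in the unit ball of $H_0^2(\omega)$ gives the claimed bound for smooth $u$, and the density of $C^\infty([0,1]\times\overline{\omega})$ in $H_\Delta(\check{\Omega})$ extends it to all of $H_\Delta(\check{\Omega})$.

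I do not expect any genuine obstacle here: the statement is a routine localization of the trace theorem for $H_\Delta$ to the two ends of the cylinder, and the only mildly delicate points are bookkeeping — choosing the cutoff $\chi$ so that precisely one of $\chi(0),\chi'(0),\chi(1),\chi'(1)$ is nonzero — together with the two density reductions, which run exactly parallel to those in the proofs of Lemmas \ref{density} and \ref{trace}. One could alternatively treat all four cases simultaneously using a single cutoff that equals $1$ near $x_1=0$ and $0$ near $x_1=1$ (and its mirror image), but the explicit polynomial choice above keeps the constants completely transparent and avoids any auxiliary Hardy-type inequality.
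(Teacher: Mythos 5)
Your proposal is correct and follows essentially the same route as the paper: Green's second identity on $\check{\Omega}$ tested against a separated function $\chi(x_1)\psi(x')$ with $\psi$ compactly supported in $\omega$ and $\chi$ chosen so that exactly one of the four end terms survives, followed by Cauchy--Schwarz and the two density reductions. The only (immaterial) difference is cosmetic: the paper uses a smooth cutoff equal to $1$ near $x_1=0$ and vanishing near $x_1=1$ — precisely the alternative you mention at the end — whereas you isolate each case with an explicit Hermite cubic.
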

\begin{proof} 
Let us prove the claim for $u \mapsto \pd_{x_1} u(0,\cdot)$, the three other cases being treated in the same way.
To this purpose we consider a function $\chi \in C_0^{\infty}(\R)$ satisfying $\chi(x_1)=1$ for $x_1 \in [-1 \slash 3, 1 \slash 3]$, $\chi(x_1) \in [0,1]$ for $x_1 \in (1 \slash 3,2 \slash 3)$ and $\chi(x_1)=0$ for $x_1 \in [2 \slash 3,4 \slash 3]$. We pick $g \in H_0^2(\omega)$, the closure of $C_0^\infty(\omega)$ in $H^2(\omega)$, put $v(x_1,x'):=\chi(x_1) g(x')$, and then notice that
\bel{np1} 
v=\pd_\nu v = 0\ \mbox{on}\ \check{\Gamma},\ \pd_{x_1} v(0,\cdot)=v(1,\cdot)=\pd_{x_1} v(1,\cdot) =0\ \mbox{and}\ v(0,\cdot)=g\ \mbox{in}\ \omega.
\ee
Moreover, there exists a positive constant $C=C(\omega,\chi)$ such that we have
\bel{np2}
\norm{v}_{H^2(\check{\Omega})} \leq C \norm{g}_{H^2(\omega)}.
\ee
For all $u \in C^{\infty}([0,1] \times \overline{\omega})$, we deduce from \eqref{np1} and the Green formula that
$$ \langle \pd_{x_1} u(0,\cdot) , g \rangle_{L^2(\omega)} = \langle u ,  \Delta v \rangle_{\check{\Omega}} - \langle \Delta u ,  v \rangle_{\check{\Omega}}. $$
This and \eqref{np2} yield
$$
\abs{\langle \pd_{x_1} u(0,\cdot) , g \rangle_{H^{-2}(\omega),H_0^2(\omega)}} \leq 2\norm{u}_{H_\Delta(\check{\Omega})} \norm{v}_{H^2(\check{\Omega})} \leq C \norm{u}_{H_\Delta(\check{\Omega})}\norm{g}_{H^2(\omega)}.
$$
Therefore, since $C^\infty\left([0,1] \times \overline{\omega}\right)$ is dense in $H_\Delta(\check{\Omega})$, we may extend the mapping $u \mapsto \pd_{x_1} u(0,\cdot)$ to a continuous map from $H_\Delta(\check{\Omega})$ into $H^{-2}(\omega)$. 
\end{proof}

With reference to Lemma \ref{np}, we introduce the following closed subset of $H_\Delta(\check{\Omega})$:
$$ 
H_{\Delta,\theta}(\check{\Omega}):=\{ u \in H_\Delta(\check{\Omega});\ \pd_{x_1}^j u(1,\cdot) - e^{i \theta} \pd_{x_1}^j u(0,\cdot) = 0\ \mbox{on}\ \omega\ \mbox{for}\  j=0,1 \}.
$$
Notice for further use that
\bel{ec1b}
\mathcal U H_\Delta(\Omega) = \int_{(0, 2 \pi)}^\oplus H_{\Delta,\theta}(\check{\Omega}) \frac{d \theta}{2 \pi},
\ee
and that $H_{\Delta,\theta}(\check{\Omega}) \cap C^\infty \left([0,1]\times\overline{\omega}\right) \subset \cH_{\theta}^2(\check{\Omega})$. Moreover, since
$\cH_{\theta}^2(\check{\Omega})$ is continuously embedded in $H_{\Delta,\theta}(\check{\Omega})$ and that $\cC_{\theta}^\infty \left([0,1]\times\overline{\omega}\right)$ is dense in $\cH_{\theta}^2(\check{\Omega})$, then the space $\cC_{\theta}^\infty\left([0,1] \times \overline{\omega}\right)$ is dense in $H_{\Delta,\theta}(\check{\Omega})$ as well. 
Therefore, by reasoning in the same way as in the proof of Lemma \ref{trace}, we extend the mapping $w \in \cC_{\theta}^\infty \left([0,1]\times \overline{\omega}\right) \mapsto w_{\vert\check{\Gamma}}$ (resp., $w \in \cC_{\theta}^\infty \left([0,1]\times \overline{\omega}\right) \mapsto \pd_\nu w_{\vert\check{\Gamma}}$) to a bounded operator 
$$ \mathcal T_{0,\theta} : H_{\Delta,\theta}(\check{\Omega}) \to \cH_{\theta}^{-2}(0,1; H^{-\frac{1}{2}}(\pd \omega))\ (\mbox{resp.},\ \mathcal T_{1,\theta} : H_{\Delta,\theta}(\check{\Omega}) \to \cH_{\theta}^{-2}(0,1; H^{-\frac{3}{2}}(\pd \omega))).
$$ 
Here we denote by $\cH_{\theta}^s(0,1;X)$, where $s \in \R$ and $X$ is a Banach space for the norm $\norm{\cdot}_X$, the set of functions
$$ 
t \in (0,1) \mapsto \varphi(t) := \sum_{k\in\Z} \varphi_{k} e^{i(\theta + 2 \pi k)t}\ \mbox{with}\
(\varphi_{k})_{k} \in X^{\Z}\ \mbox{satisfying}\ \sum_{k \in \Z }(1+k^2)^{s} \norm{\varphi_{k}}_{X}^2<\infty.
$$
Endowed with the norm $\norm{\varphi}_{\cH_{\theta}^s(0,1;X)}:=\left(\sum_{k \in \Z} (1+k^2)^{s} \norm{\varphi_{k}}_{X}^2 \right)^{\frac{1}{2}}$, $\cH_{\theta}^s(0,1;X)$ is a  
Banach space. Notice that if $X'$ is the dual space of $X$, then $\cH_{\theta}^{-s}(0,1;X')$ is the space dual to $\cH_{\theta}^s(0,1;X)$.

Let us next introduce the set
$$ \sH_\theta(\check{\Gamma}) :=\{ \mathcal T_{0,\theta} u;\ u \in H_{\Delta,\theta}(\check{\Omega}) \}. $$
Arguing as in the derivation of Lemma \ref{p5}, we check that $\mathcal T_{0,\theta}$ is a bijection from $B_\theta :=\{ u\in H_{\Delta,\theta}(\check{\Omega});\ \Delta u=0\}$ onto $\sH_\theta(\check{\Gamma})$. Put
\bel{ec2}
\norm{f}_{\sH_\theta(\check{\Gamma})} :=\norm{\mathcal T_{0,\theta}^{-1} f}_{H_{\Delta}(\check{\Omega})} = \norm{\mathcal T_{0,\theta}^{-1} f}_{L^2(\check{\Omega})},
\ee
where $\mathcal T_{0,\theta}^{-1}$ denotes the operator inverse to $\mathcal T_{0,\theta} : B_\theta \to \sH_\theta(\check{\Gamma})$.

We may now establish the following technical result, which is inspired by Proposition \ref{pr-a}.

\begin{proposition}
\label{p9} 
Pick $\theta \in [0,2\pi)$ and assume that the conditions of Proposition \ref{pr-a} are satisfied. Then the three following statements are true.
\begin{enumerate}[(i)]
\item For any $f \in \sH_\theta(\check{\Gamma})$, there exists a unique solution $v \in H_{\Delta,\theta}(\check{\Omega})$
to \eqref{eq2}, satisfying
\bel{p6z}
\norm{v}_{L^2(\check{\Omega})}\leq C \norm{f}_{\sH_\theta(\check{\Gamma})},
\ee
for some positive constant $C=C(\omega,M_\pm)$, independent of $\theta$. 
\item The DN map $\Lambda_{V,\theta} : f \mapsto \mathcal T_{1,\theta} u_{|\check{G}}$, where we recall that $\check{G}=(0,1) \times G'$, is a bounded operator from $\sH_\theta(\check{\Gamma})$ into $\cH_{\theta}^{-2}(0,1;H^{-\frac{3}{2}}(G'))$.
\item For every $W \in \mathscr{V}_\omega(M_\pm)$, the operator $\Lambda_{V,\theta}-\Lambda_{W,\theta}$ is bounded from $\sH_\theta(\check{\Gamma})$ into $L^2(\check{G})$.
\end{enumerate}
\end{proposition}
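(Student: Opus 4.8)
The statement of Proposition \ref{p9} is the fibered analogue of Proposition \ref{pr-a}, so the strategy is to transplant the three-step argument of Section \ref{sec-pra} to the domain $\check{\Omega}$, replacing the operator $A_V$ by its fiber $\mathcal{A}_{V,\theta}$, and then to check that all constants obtained this way are uniform in $\theta$. The main point throughout is that the Poincar\'e inequality \eqref{inegp1} on $\omega$ survives intact on $\check\Omega$: for every $\theta$ and every $u$ in the form domain $\cH^2_\theta(\check\Omega)\cap L^2(0,1;H^1_0(\omega))$ one has $\|\nabla'u\|_{L^2(\check\Omega)}\ge C_\omega\|u\|_{L^2(\check\Omega)}$, simply by integrating \eqref{inegp1} in $x_1$ over $(0,1)$. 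Hence the quadratic form generating $\mathcal{A}_{V,\theta}$ satisfies $a_{V,\theta}[u]\ge(C_\omega-M_-)\|u\|^2_{L^2(\check\Omega)}$, so that $\mathcal{A}_{V,\theta}$ is boundedly invertible in $L^2(\check\Omega)$ with
\bel{p9binv}
\|\mathcal{A}_{V,\theta}^{-1}\|_{\B(L^2(\check\Omega))}\le\frac{1}{C_\omega-M_-},
\ee
\emph{the bound being independent of} $\theta$. This is the crucial observation making all the forthcoming estimates $\theta$-uniform.

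\textbf{Step 1: existence, uniqueness and \eqref{p6z}.} Given $f\in\sH_\theta(\check\Gamma)$, observe as in Section \ref{sec-pra} that $v$ solves \eqref{eq2} if and only if $u:=v-\mathcal{T}_{0,\theta}^{-1}f$ solves the quasi-periodic BVP with homogeneous Dirichlet condition on $\check\Gamma$ and right-hand side $-V\mathcal{T}_{0,\theta}^{-1}f\in L^2(\check\Omega)$. Since $\mathcal{A}_{V,\theta}$ is boundedly invertible, this forces $u=-\mathcal{A}_{V,\theta}^{-1}V\mathcal{T}_{0,\theta}^{-1}f$, which proves existence and uniqueness of $v=u+\mathcal{T}_{0,\theta}^{-1}f\in H_{\Delta,\theta}(\check\Omega)$; the membership in $H_{\Delta,\theta}(\check\Omega)$ follows because $u\in D(\mathcal{A}_{V,\theta})\subset\cH^2_\theta(\check\Omega)$ and $\mathcal{T}_{0,\theta}^{-1}f\in B_\theta\subset H_{\Delta,\theta}(\check\Omega)$. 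The estimate \eqref{p6z} then drops out of
$$\|u\|_{L^2(\check\Omega)}\le M_+\|\mathcal{A}_{V,\theta}^{-1}\|_{\B(L^2(\check\Omega))}\|\mathcal{T}_{0,\theta}^{-1}f\|_{L^2(\check\Omega)}\le\frac{M_+}{C_\omega-M_-}\|f\|_{\sH_\theta(\check\Gamma)},$$
combined with $\|\mathcal{T}_{0,\theta}^{-1}f\|_{L^2(\check\Omega)}=\|f\|_{\sH_\theta(\check\Gamma)}$ from \eqref{ec2}; the resulting constant $C=1+M_+/(C_\omega-M_-)$ depends only on $\omega$ and $M_\pm$.

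\textbf{Step 2: boundedness of $\Lambda_{V,\theta}$ into $\cH^{-2}_\theta(0,1;H^{-3/2}(G'))$.} Since $v\in L^2(\check\Omega)$ and $\Delta v=Vv\in L^2(\check\Omega)$, we have $v\in H_{\Delta,\theta}(\check\Omega)$ with $\|v\|^2_{H_\Delta(\check\Omega)}\le(1+M_+^2)\|v\|^2_{L^2(\check\Omega)}$. Using the continuity of the fiber trace operator $\mathcal{T}_{1,\theta}:H_{\Delta,\theta}(\check\Omega)\to\cH^{-2}_\theta(0,1;H^{-3/2}(\pd\omega))$ constructed just before the statement of Proposition \ref{p9}, and then restricting to $\check G$, we obtain boundedness of $\Lambda_{V,\theta}$ together with \eqref{p6z}. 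One should however record a $\theta$-uniform bound on $\|\mathcal{T}_{1,\theta}\|$: this is legitimate because the lifting $\mathcal L$ used in the analogue of Lemma \ref{trace} is built from the $x_1$-independent operator $L:H^{3/2}(\pd\omega)\times H^{1/2}(\pd\omega)\to H^2(\omega)$ and a fixed cutoff, hence its norm does not depend on $\theta$; alternatively one invokes the fiber decomposition \eqref{ec1b} of $\mathcal{T}_1$ together with the fact that the norm of a decomposable operator equals the essential supremum of the norms of its fibers.

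\textbf{Step 3: the difference $\Lambda_{V,\theta}-\Lambda_{W,\theta}$ is $L^2(\check G)$-valued.} Let $w$ solve \eqref{eq2} with $W$ in place of $V$ and the same data $f$. Then $u:=v-w$ solves the quasi-periodic BVP with homogeneous Dirichlet condition on $\check\Gamma$ and right-hand side $(W-V)w\in L^2(\check\Omega)$, so $u=\mathcal{A}_{V,\theta}^{-1}(W-V)w\in D(\mathcal{A}_{V,\theta})$. Since the graph norm of $\mathcal{A}_{V,\theta}$ is equivalent to the $H^2$-norm on $\cH^2_\theta(\check\Omega)$ — here one uses the quasi-periodic elliptic regularity of \cite[Lemma 2.2]{CKS}, whose constant is $\theta$-independent — together with \eqref{p9binv} one gets $\|u\|_{H^2(\check\Omega)}\le C\|w\|_{L^2(\check\Omega)}$ with $C=C(\omega,M_\pm)$. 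The trace $u\mapsto\pd_\nu u_{|\check\Gamma}$ is continuous from $H^2(\check\Omega)$ into $L^2(\check\Gamma)$ (indeed the map $u\mapsto(u_{|\check\Gamma},\pd_\nu u_{|\check\Gamma})$ is continuous from $H^2(\check\Omega)$ into $H^{3/2}(\check\Gamma)\times H^{1/2}(\check\Gamma)$), so combining with \eqref{p6z} for $w$ yields $\|\pd_\nu u\|_{L^2(\check\Gamma)}\le C\|f\|_{\sH_\theta(\check\Gamma)}$. Finally, the identity $\pd_\nu u_{|\check G}=\mathcal{T}_{1,\theta}u_{|\check G}=(\Lambda_{V,\theta}-\Lambda_{W,\theta})f$ — which holds because $u\in\cH^2_\theta(\check\Omega)$, so its distributional trace $\mathcal{T}_{1,\theta}u$ agrees with the classical normal derivative — gives the claim.

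\textbf{Main obstacle.} None of the individual steps is deep; the only real subtlety is the bookkeeping of $\theta$-uniformity, and in particular verifying that the constant in the $H^2$-elliptic-regularity estimate for $\mathcal{A}_{V,\theta}$ and the norm of $\mathcal{T}_{1,\theta}$ do not blow up as $\theta$ ranges over $[0,2\pi)$. The cleanest way to handle this is to note that $\theta$ enters only through boundary conditions at $x_1=0,1$ that are unitarily equivalent (via multiplication by $e^{i\theta x_1}$, up to lower-order terms) to the periodic case, or equivalently to exploit the direct-integral structure \eqref{ec1}--\eqref{ec1b} so that each constant is literally the same as the one for $A_V$ furnished by Proposition \ref{pr-a} and \cite[Lemma 2.2]{CKS}.
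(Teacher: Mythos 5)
Your proposal is correct and follows essentially the same route as the paper: reduce to the homogeneous quasi-periodic problem, invert $\mathcal{A}_{V,\theta}$ via the $\theta$-uniform coercivity of the form $a_{V,\theta}$ from \eqref{qf} and the Poincar\'e inequality \eqref{inegp1}, and then transplant parts (ii)--(iii) of Proposition \ref{pr-a} verbatim. The only cosmetic difference is that the paper first deduces $0\notin\sigma(\mathcal{A}_{V,\theta})$ from the spectral inclusion $\sigma(\mathcal{A}_{V,\theta})\subset\sigma(A_V)$ given by the direct-integral decomposition \eqref{ec1}, whereas you obtain invertibility (and the same $\theta$-independent bound) directly from the form coercivity; both are valid and the paper itself invokes the coercivity for the norm estimate.
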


\begin{proof} 
To prove (i) we use the fact that $v$ is solution to \eqref{eq2} if and only if the function $u:=v-\mathcal T_{0,\theta}^{-1} f$  satisfies the BVP
\bel{Eq2}
\left\{ 
\begin{array}{rcll} 
(-\Delta + V) u & = & -V \mathcal T_{0,\theta}^{-1} f, & \mbox{in}\ \check{\Omega},\\ 
u & = & 0,& \mbox{on}\ \check{\Gamma}, \\
u(1,\cdot) - e^{i\theta}  u(0,\cdot) & = & 0, & \mbox{in}\ \omega. \\
\pd_{x_1} u(1,\cdot) - e^{i\theta} \pd_{x_1} u(0,\cdot) & = & 0, & \mbox{in}\ \omega.
\end{array} 
\right.
\ee
Due to \eqref{ec1} we know from \cite[Theorem XIII.98]{RS2} that $\sigma(\mathcal A_{V,\theta}) \subset \sigma(A_V)$. Hence $0 \notin \sigma(\mathcal A_{V,\theta})$ and \eqref{Eq2} admits a unique solution $u=- \mathcal A_{V,\theta}^{-1} V \mathcal T_{0,\theta}^{-1} f$ as $V \mathcal T_{0,\theta}^{-1} f \in L^2(\check{\Omega})$. As a consequence $v=(I-\mathcal A_{V,\theta}^{-1} V) \mathcal T_{0,\theta}^{-1} f$ is the unique solution to the BVP \eqref{eq2} and \eqref{p6z} follows readily from this, \eqref{ec2} and the estimate
%\bel{resb}
%\| \mathcal A_{V,\theta}^{-1} \|_{\B(L^2(\check{\Omega}))} \leq \frac{1}{C_\omega-M_-},
%\ee
$$ \| \mathcal A_{V,\theta}^{-1} \|_{\B(L^2(\check{\Omega}))} \leq \frac{1}{C_\omega-M_-}, $$
arising from \eqref{inegp1}-\eqref{p1const} and the fact that the operator $\mathcal A_{V,\theta}$ is generated by the following quadratic form:
\bel{qf}
u \mapsto a_{V,\theta}[u] := \int_{\check{\Omega}} \left( |\nabla u(x) |^2 + V(x) |u(x)|^2 \right) dx,\ u \in D(a_{V,\theta}):= \cH_{\theta}^1(\check{\Omega}) \cap L^2(0,1;H_0^1(\omega)).
\ee

The rest of the proof follows the same lines as the derivation of (ii)-(iii) in Proposition \ref{pr-a}.
\end{proof}

For further reference, we now establish for every $\theta \in [0,2 \pi)$ that the estimate
\bel{esti} 
\norm{\mathcal T_{0,\theta}u}_{\sH_{\theta}(\check{\Gamma})} \leq C\norm{u}_{H_\Delta(\check{\Omega})},\ u \in H_{\Delta,\theta}(\check{\Omega}),
\ee
holds for some constant $C>0$ depending only on $\omega$. Indeed, for each $u \in H_{\Delta,\theta}(\check{\Omega})$, where $\theta \in [0,2 \pi)$ is fixed, put 
$f:=\mathcal T_{0,\theta} u \in \sH_\theta(\check{\Gamma})$ and let $v:= \mathcal T_{0,\theta}^{-1} f \in B_\theta$. Evidently $w:=v-u$ satisfies
$$
\left\{ 
\begin{array}{rcll} 
-\Delta w & = & \Delta u , & \mbox{in}\ \check{\Omega},\\ 
w & = & 0,& \mbox{on}\ \check{\Gamma}, \\
w(1,\cdot) - e^{i\theta}  w(0,\cdot) & = & 0, & \mbox{in}\ \omega. \\
\pd_{x_1} w(1,\cdot) - e^{i\theta} \pd_{x_1} w(0,\cdot) & = & 0, & \mbox{in}\ \omega,
\end{array} 
\right.
$$
hence $w=\mathcal A_{0,\theta}^{-1} \Delta u$. Since $\mathcal A_{0,\theta}$ is bounded from below by the Poincar\'e constant $C_\omega>0$ defined in \eqref{p1const}, in virtue of \eqref{qf}, it holds true that $\| w \|_{L^2(\check \Omega)} \leq C_\omega^{-1} \| \Delta u \|_{L^2(\check \Omega)}$. Therefore we have
$$ \| f \|_{\sH_\theta(\check{\Gamma})} = \| v \|_{L^2(\check{\Omega})} \leq \| u \|_{L^2(\check{\Omega})} + \| w \|_{L^2(\check{\Omega})} \leq (1+C_\omega^{-1}) \left( \| u \|_{L^2(\check{\Omega})} + \| \Delta u \|_{L^2(\check{\Omega})} \right), $$
which yields \eqref{esti}.

\subsection{Linking up \eqref{eq1} with \eqref{eq2}}
\label{sec-lin}
Let $Y$ be either $\omega$ or $\pd \omega$, and let $s \in \R_+$ and $k \in \N$.
In light of \eqref{c3} we extend $\mathcal{U}$ to a unitary operator from $\cH^{k,s}(\R \times Y)$ onto the Hilbert space
$\int_{(0,2 \pi)}^\oplus \cH_{\theta}^k(0,1;H^s(Y)) \frac{d \theta}{2 \pi}$.
Next, for $f \in \cH^{-k,-s}(\R \times Y)$, we define $\U f$ by setting
$$ \langle \U f, g \rangle := \langle f, \U^{-1} g \rangle_{\cH^{-k,-s}(\R \times Y), \cH_0^{k,s}(\R \times Y)}\ \mbox{for all}\ g \in \int_{(0,2 \pi)}^\oplus \cH_{\theta}^k(0,1;H_0^{s}(Y)) \frac{d \theta}{2 \pi}. $$
Here $\langle .,.\rangle$ denotes the duality pairing  between $\int_{(0,2 \pi)}^\oplus \cH_{\theta}^{-k}(0,1;H^{-s}(Y)) \frac{d \theta}{2 \pi}$ and $\int_{(0,2 \pi)}^\oplus \cH_{\theta}^{k}(0,1;H_0^{s}(Y)) \frac{d \theta}{2 \pi}$, and we recall that $\cH_0^{k,s}(\R \times Y)=H^k(\R;H_0^s(Y))$, where $H_0^s(Y)$ is the closure of $C_0^{\infty}(Y)$ in the topology of $H^s(Y)$. 
It is clear that the above defined operator $\mathcal U$ is  unitary from $\cH^{-k,-s}(\R \times Y)$ onto the Hilbert space
$\int_{(0,2 \pi)}^\oplus \cH_{\theta}^{-k}(0,1;H^{-s}(Y)) \frac{d \theta}{2 \pi}$. Moreover, for every $f \in C_0^\infty(\R; C^\infty(\overline{Y}))$ and $g \in \int_{(0,2 \pi)}^\oplus \cH_{\theta}^{k}(0,1;H_0^{s}(Y)) \frac{d \theta}{2 \pi}$, we have
$$\langle \U f, g \rangle  = \langle \U f, g  \rangle_{L^2 \left( (0, 2 \pi) \frac{d \theta}{2 \pi};\cH_{\theta}^{k}(0,1;H^{s}(Y) \right) }
= \langle  f , \U^{-1} g \rangle_{L^2(\R \times Y)}, $$
and hence $\langle \U f, g \rangle= \langle f , \U^{-1} g \rangle_{\cH^{-k,-s}(\R \times Y),\cH_0^{k,s}(\R \times Y)}$, which shows that the above definition of $\mathcal U$ is more general than the one of Subsection \ref{sec-fbg}. 

For $j=0,1$, it is apparent that $(\U \mathcal T_j u)_\theta = \mathcal T_{j,\theta} (\U u)_\theta$ for all $u \in C_0^\infty(\overline{\Omega})$ and $\theta \in [0,2\pi)$.
Since the operators $\mathcal T_j : H_\Delta(\Omega) \to \cH^{-2,-(j+\frac{1}{2})}(\Gamma)$ and $\mathcal T_{j,\theta} : H_{\Delta,\theta}(\check{\Omega}) \to \cH_{\theta}^{-2}(0,1; H^{-(j+\frac{1}{2})}(\pd \omega))$ are bounded, we deduce from \eqref{ec1b} and the density of $C_0^\infty(\overline{\Omega})$ in $H_\Delta(\Omega)$
that $\U \mathcal T_j \U^{-1} = \int_{(0,2 \pi)}^{\oplus} \mathcal T_{j,\theta} \frac{d \theta}{2 \pi}$. 
This entails that $\U \sH(\Gamma)=\int_{(0,2 \pi)}^{\oplus} \sH_\theta(\check{\Gamma}) \frac{d \theta}{2 \pi}$. Moreover, arguing as in the proof of \cite[Proposition 3.1]{CKS}, we find for any $f \in \sH(\Gamma)$ that $u$ is the $H_\Delta(\Omega)$-solution to \eqref{eq1} if and only if each function $(\U u)_\theta \in H_{\Delta,\theta}(\check{\Omega})$, for a.e. $\theta \in [0, 2 \pi)$, satisfies \eqref{eq2} where $(\U f)_\theta$ is substituted for $g$. This is provided $V \in L^{\infty}(\Omega;\R)$ verifies \eqref{eq-per} in such a way that \eqref{ec1} holds. Consequently we have:
\bel{ec3}
\mathcal U \Lambda_V \mathcal U^{-1} = \int_{(0,2\pi)}^\oplus \Lambda_{V,\theta} \frac{d \theta}{2 \pi}.
\ee

Let $V_j$, $j=1,2$, be the two potentials introduced in Theorem \ref{thm1}. Then, in light of \eqref{ec3} and (iii) in Propositions \ref{pr-a} and \ref{p9}, we have
\bel{norm-fibre} 
\| \Lambda_{V_1}-\Lambda_{V_2}  \|_{\B\left(\sH(\Gamma),L^2(G)\right)} = \sup_{\theta \in [0,2 \pi)} \| \Lambda_{V_1,\theta}-\Lambda_{V_2,\theta} \|_{\B(\sH_\theta(\check{\Gamma}) ,L^2(\check{G}))},
\ee
from \cite[Section II.2, Proposition 2]{Di}. As the function $\Phi$, given by \eqref{def-Phi}, is non decreasing, then Theorem \ref{thm1} follows readily from \eqref{norm-fibre} and the following statement whose proof is given in Section \ref{sec-thm2}.

\begin{theorem}
\label{thm2} 
Under the conditions of Theorem \ref{thm1}, the following estimate
\bel{thm2a} 
\norm{V_1-V_2}_{H^{-1}(\check{\Omega})}\leq C_\theta \Phi \left(\norm{\Lambda_{{V}_1,\theta}-\Lambda_{{V}_2,\theta}}\right),
\ee
holds for every $\theta \in [0,2\pi)$. Here $C_\theta$ is a positive constant depending only on $T$, $\omega$, $G'$ and possibly on $\theta$, $\Phi$ is defined by \eqref{def-Phi}, and
$\norm{\Lambda_{{V}_1,\theta}-\Lambda_{{V}_2,\theta}}$ denotes the usual norm of $\Lambda_{{V}_1,\theta}-\Lambda_{{V}_2,\theta}$ in $\B(\sH_\theta(\check{\Gamma}),L^2(\check{G}))$. 
\end{theorem}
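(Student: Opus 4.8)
The plan is to follow the classical Bukhgeim–Uhlmann scheme, adapted to the quasi-periodic fiber problem \eqref{eq2}. Fix $\theta \in [0,2\pi)$. First I would construct, via Section \ref{sec-cgo}, complex geometric optics (CGO) solutions to $(-\Delta + V_j)v_j = 0$ in $\check{\Omega}$ of the form $v_j = e^{\zeta_j \cdot x}(a_j + r_j)$, with $\zeta_j \in \C^3$ satisfying $\zeta_j \cdot \zeta_j = 0$ and $|\zeta_j|$ large (say $|\zeta_j| \sim \tau$), where $a_j$ is an explicit amplitude chosen so that the product $\overline{v_1} v_2$ reconstructs the Fourier mode of $V_1 - V_2$ we want to probe, and $r_j$ is a remainder with $\|r_j\|_{L^2(\check{\Omega})} = O(\tau^{-1})$ or better. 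The quasi-periodic boundary conditions in \eqref{eq2} force the choice of $\zeta_j$ to be compatible with the phase $e^{i\theta}$ across $x_1 = 0$ and $x_1 = 1$; this is where the linear-weight Carleman estimate of Section \ref{sec-ce} enters, replacing the \cite{KSU} estimate as the introduction explains. I would make the CGO amplitudes vanish (or decay) on the $\xi$-shadowed face so that the boundary integral over $\check{\Gamma} \setminus \check{G}$ can be absorbed.

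Second, I would derive the standard integral identity: if $v_1$ solves the $V_1$-equation and $v_2$ the $V_2$-equation with the same Dirichlet data $g = \mathcal T_{0,\theta}^{-1}$ applied to the trace of the CGO, then
\bel{thm2-id}
\int_{\check{\Omega}} (V_1 - V_2)\, v_1\, \overline{v_2}\, dx = \int_{\check{G}} \bigl((\Lambda_{V_1,\theta} - \Lambda_{V_2,\theta}) g\bigr)\, \overline{\mathcal T_{0,\theta} v_2}\, d\sigma + R,
\ee
where $R$ collects the boundary contributions over $\check{\Gamma} \setminus \check{G}$ together with the quasi-periodic boundary terms at $x_1 = 0,1$. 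The Carleman estimate of Section \ref{sec-ce}, applied to the phase of the CGO, is used precisely to bound $R$: the terms on the non-observed part of the boundary come with an exponentially favorable weight, so that $|R| \leq C e^{-c\tau}$ or $|R| \leq C\tau^{-1}\|\cdot\|$ after optimization, while the quasi-periodic terms vanish by construction since both $v_1$ and $v_2$ satisfy the $\theta$-quasi-periodic conditions. Plugging the CGO expansions into the left side of \eqref{thm2-id} and letting the product of leading amplitudes produce $\widehat{(V_1-V_2)}(\xi_0)$ for a frequency $\xi_0$ of size $O(\tau)$, I would obtain, for each such $\xi_0$,
$$
\bigl| \widehat{(V_1-V_2)}(\xi_0) \bigr| \leq C\Bigl( e^{C\tau} \norm{\Lambda_{V_1,\theta}-\Lambda_{V_2,\theta}} + \tau^{-1} \Bigr),
$$
the first term coming from the exponential growth $\|e^{\zeta_j \cdot x}\|$ on $\check{\Omega}$ and the bound on the DN difference via \eqref{esti} and (iii) of Proposition \ref{p9}, the second from the CGO remainders.

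Third, I would run the Fourier-splitting (Alessandrini-type) argument: write $\|V_1 - V_2\|_{H^{-1}(\check{\Omega})}^2$ as a low-frequency part $|\xi| \leq \rho$ plus a high-frequency part $|\xi| > \rho$. The high part is controlled by $\rho^{-2}\|V_1-V_2\|_{L^2}^2 \leq C\rho^{-2}$ using the a priori bound $\|V_j\|_{L^\infty} \leq M_+$ (extending $V_1-V_2$ from $\check{\Omega}$ suitably — note $V_j$ is $1$-periodic so the restriction to $\check{\Omega}$ carries all the information). The low part is estimated by the pointwise CGO bound above, integrated over $|\xi_0| \leq \rho$, which gives $C(e^{C\rho}\norm{\Lambda_{V_1,\theta}-\Lambda_{V_2,\theta}} + \rho^{-1})$. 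Optimizing over $\rho$ and $\tau$ (the standard choice $\rho \sim \ln|\ln\gamma|$ with $\gamma = \norm{\Lambda_{V_1,\theta}-\Lambda_{V_2,\theta}}$) yields the $(\ln|\ln\gamma|)^{-1}$ rate of $\Phi$, with the linear regime $\gamma \geq \gamma^*$ absorbed trivially by boundedness of the DN maps. The constant $C_\theta$ depends on $\theta$ only through the norms appearing in the Carleman estimate and in \eqref{esti}, both of which are in fact uniform in $\theta$; I would keep the $\theta$-dependence in the statement for safety and note that the uniform bound is what makes \eqref{norm-fibre} usable in deducing Theorem \ref{thm1}.

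The main obstacle I anticipate is the construction of CGO solutions that are simultaneously (a) quasi-periodic in $x_1$ with the prescribed phase $e^{i\theta}$, (b) concentrated away from the $\xi$-shadowed face so the unobserved boundary integral is negligible, and (c) equipped with remainder estimates uniform in $\theta$. Items (a) and (b) pull in different directions: a linear CGO phase $\zeta \cdot x$ has a fixed real part $\re\zeta$ whose $x_1$-component interacts with the quasi-periodicity, so one must either decompose $\zeta$ carefully or correct the amplitude periodically; getting the remainder bound then requires solving a $\bar\partial$-type or Faddeev-type equation on the bounded slab $\check{\Omega}$ with quasi-periodic boundary conditions, which is exactly the content of Section \ref{sec-cgo} and the Carleman estimate of Section \ref{sec-ce}. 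Everything downstream — the integral identity and the Fourier splitting — is routine once these CGO solutions and their boundary-term bounds are in hand.
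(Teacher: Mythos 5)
Your overall architecture (CGO solutions adapted to the quasi-periodic fiber problem, the Green/orthogonality identity, the Carleman estimate of Section \ref{sec-ce} to control the Neumann data on the unobserved face, then Fourier splitting in $H^{-1}$) matches the paper's proof through the derivation of the preliminary estimate of Lemma \ref{l100}. But there is a genuine gap at the heart of your third step. The CGO/Carleman machinery does \emph{not} give you the pointwise bound $\abs{\widehat{(V_1-V_2)}(\xi_0)} \leq C(e^{C\tau}\norm{\Lambda_{V_1,\theta}-\Lambda_{V_2,\theta}}+\tau^{-1})$ for \emph{all} frequencies $\xi_0$ of size $O(\rho)$. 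It gives it only for frequencies of the form $(2\pi k,\eta)$ with $\eta$ orthogonal to some direction $\xi\in\mathbb S^1$ satisfying $\abs{\xi-\xi_0}\leq\epsilon$, because the Carleman estimate only absorbs the unobserved boundary term when the illuminated face $\pd\omega_\xi^-$ sits inside $G'$, which constrains $\xi$ to a small cap and hence constrains $\eta$ to a sector of $\R^2$. You cannot simply ``integrate the pointwise bound over $\abs{\xi_0}\leq\rho$'': on most of that ball you have no estimate at all. The paper bridges this by a quantitative analytic continuation argument (Lemma \ref{p8}, borrowed from Vessella and Apraiz--Escauriaza): each $\widehat{v_k}$ is analytic with controlled derivative growth because $V$ is compactly supported in $x'$, so smallness on the sector $E$ of admissible directions propagates to the full ball $D_R$ at the cost of a factor $e^{2\rho(1-\upsilon)}$ and an exponent $\upsilon\in(0,1)$. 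This step is not optional bookkeeping; it is precisely what degrades the rate from single-log to $(\ln\abs{\ln\gamma})^{-1}$. As written, your optimization asserts the log-log rate without containing the mechanism that produces it -- if your pointwise bound really held on the whole low-frequency ball, the standard Alessandrini optimization would give a single logarithm.

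Two smaller points. First, ``making the CGO amplitudes vanish on the shadowed face'' is not how the unobserved boundary integral is handled: the Carleman estimate is applied to the difference $u=v_1-u_2$, which has zero Dirichlet trace on $\check{\Gamma}$, and it bounds $\pd_\nu u$ on the $\xi$-shadowed part by $\pd_\nu u$ on the illuminated part (contained in $\check{G}$) plus the source term $\norm{e^{-\tau\xi\cdot x'}Vu_2}_{L^2(\check{\Omega})}$, which is $O(1)$ by the choice of $\zeta_2$. Second, the Fourier variable dual to $x_1$ is quantized to $2\pi\Z$ (Fourier coefficients on $(0,1)$), and the passage from these to the $H^{-1}(\check{\Omega})$ norm requires a statement like Lemma \ref{lem1}; your sketch treats the frequency as a continuous three-dimensional variable, which glosses over this and over the separate treatment of the region where $\abs{\eta}$ is small (the set $B_\rho\cap\cC_\rho$ in the paper), where the relation \eqref{ll1a} between $\tau$ and the frequency breaks down.
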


\begin{remark}
\label{rmk-cst}
Notice from \eqref{norm-fibre}-\eqref{thm2a} and the non decreasing behavior of $\Phi$ on $[0,+\infty)$, that the multiplicative constant $C$ appearing in the right hand side of the stability estimate \eqref{thm1a}, reads
$$ C := \inf_{\theta \in [0,2 \pi)} C_\theta, $$
and hence is independent of $\theta$.
\end{remark}

\section{Complex geometric optics solutions}
\label{sec-cgo} 
In this section we aim for building CGO solutions to the system
\bel{qpe-s}
\left\{
\begin{array}{rcll}
(-\Delta +V) u & = & 0, &  \mbox{in}\ \check{\Omega}, \\
u(1,\cdot) - e^{i\theta} u(0,\cdot) & = & 0, & \mbox{in}\ \omega, \\
\pd_{x_1} u(1,\cdot) - e^{i\theta} \pd_{x_1} u(0,\cdot) & = & 0, & \mbox{in}\ \omega,
%u(1,\cdot) & = & e^{i\theta} u(0,\cdot)  &  \mbox{on}\ \omega, \\
%\pd_{x_1} u(1,\cdot) & = & e^{i\theta} \pd_{x_1} u(0,\cdot), & \mbox{on}\ \omega.
\end{array}
\right.
\ee
associated with $V \in L^{\infty}(\check{\Omega};\R)$ and $\theta \in [0, 2 \pi)$.
Namely, given a sufficiently large $\tau>0$, we seek solutions of the form
\bel{p1b}
u(x)=\left( 1+w(x) \right)e^{\zeta \cdot x},\ x \in \check{\Omega},
\ee
to \eqref{qpe-s}, where $\zeta \in i (\theta + 2 \pi \mathbb Z ) \times \mathbb C^2$ is chosen in such a way that $\Delta e^{\zeta \cdot x}=0$ for every $x \in \check{\Omega}$, and $w \in \cH_{0}^{2}(\check{\Omega})$ satisfies the estimate
\bel{p1a}
\| w \|_{H^{s}(\check{\Omega})} \leq C \tau^{s-1},\ s \in [0,2],
\ee
for some positive constant $C$, independent of $\tau$. %and $\theta$. 
%Here $H_{per}^{2}(\check{\Omega})$ denotes the closed subspace of $H^2(\check{\Omega})$ defined by
%\[H_{per}^{2}(\check{\Omega})=\{h\in H^2(\check{\Omega}):\ \partial_{x_1}^jh_{|x_1=2\pi}=\partial_{x_1}^jh_{|x_1=0},\ j=0,1\}.\]

To do that we proceed as follows. We pick $k \in \Z$ and for $\xi \in \mathbb S^1$ we choose $\eta \in \R^2\setminus \{0\}$ such that $\eta \cdot \xi=0$. For $r >0$, we set
\bel{def-ell}
\ell = \ell(k,\eta,r,\theta) :=\left\{ \begin{array}{cl}  \left(\theta + 2\pi ([r]+1) \right) \left( 1 , -2 \pi k \frac{\eta}{\abs{\eta}^2} \right), & \textrm{if}\ k\ \mbox{is even},\\
\left(\theta+ 2 \pi \left( [r]+\frac{3}{2} \right) \right) \left( 1 , -2 \pi k \frac{\eta}{\abs{\eta}^2} \right), & \textrm{if}\ k\ \mbox{is odd},
\end{array}
\right.
\ee
in such a way that $\ell \cdot (2 \pi k,\eta) = \ell' \cdot \xi =0$, where we used the notation $\ell=(\ell_1,\ell') \in \R \times \R^2$. Here $[r]$ stands for the integer part of $r$, that is the unique integer fulfilling $[r] \leq r <[r]+1$. Next, we introduce
\bel{def-tau}
\tau= \tau(k,\eta,r,\theta) := \sqrt{\frac{\abs{\eta}^2}{4}+ \pi^2 k^2+\abs{\ell}^2},
\ee
and notice that
\bel{ll1a} 
2 \pi r < \tau \leq \frac{\abs{(2 \pi k,\eta)}}{2}+4 \pi (r+1) \left( 1 + \frac{\abs{2 \pi k}}{\abs{\eta}} \right).
\ee
Then, putting
\bel{l1a}
\zeta_1 :=\left(i \pi k,-\tau \xi+i\frac{\eta}{2}\right)+i \ell\ \mbox{and}\ \zeta_2:=\left(-i \pi k, \tau \xi - i\frac{\eta}{2}\right)+ i\ell,
\ee
it is easy to check for $j=1,2$, that we have
\bel{l1b}
\zeta_1+\overline{\zeta_2}=i(2 \pi k,\eta)\ \mbox{and}\
\zeta_j \cdot \zeta_j =\im \zeta_j \cdot \re\zeta_j=0,\ \mbox{with}\ \zeta_j \in i (\theta + 2 \pi \Z ) \times \C^2\ \mbox{for}\ j=1,2. 
\ee

Further, we argue as in the derivation of \cite[Proposition 4.1 and Lemma 4.2]{CKS}, and obtain the following:

\begin{lemma}
\label{p1}
Fix $V \in L^\infty(\check{\Omega})$. Let $\theta \in [0,2 \pi)$, $k \in \Z$, and let $\xi \in \mathbb S^1$. Pick $\eta \in \R^2 \setminus\{0\}$ such that $\xi \cdot \eta=0$.  
Then we may find $\tau_0>1$ and $w \in \cH_0^2(\check{\Omega})$ fulfilling \eqref{p1a}, such that for every $\tau \geq \tau_0$, the function $u$ given by \eqref{p1b} with $\zeta=\zeta_1$, where $\zeta_1$ is defined in \eqref{l1a}, is solution to the system \eqref{qpe-s}.
\end{lemma}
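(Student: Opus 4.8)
The plan is to reduce the construction of $w$ to solving an inhomogeneous equation with a right-hand side controlled by the potential, and to invert the conjugated Laplacian by means of a suitable a priori estimate. Substituting the ansatz \eqref{p1b} with $\zeta=\zeta_1$ into \eqref{qpe-s}, and using $\zeta_1\cdot\zeta_1=0$ (hence $\Delta e^{\zeta_1\cdot x}=0$), one checks that $u$ solves $(-\Delta+V)u=0$ precisely when $w$ satisfies
$$ (-\Delta - 2\zeta_1\cdot\nabla + V)w = -V \quad\text{in}\ \check{\Omega}. $$
Moreover, since $\re\zeta_1=(0,-\tau\xi+\tfrac{\eta}{2}-\text{(real part of }i\ell)\ldots)$ has vanishing first component and $\zeta_1\in i(\theta+2\pi\Z)\times\C^2$, the factor $e^{\zeta_1\cdot x}$ is itself $\theta$-quasi-periodic in $x_1$ together with its $x_1$-derivative; therefore the quasi-periodic conditions on $u$ in \eqref{qpe-s} translate into the genuine periodicity (i.e. $0$-quasi-periodicity) conditions on $w$ that are built into $\cH_0^2(\check{\Omega})=\cH_0^2(\check{\Omega})$, and the Dirichlet-type vanishing on $\check\Gamma$ is also absorbed into that space. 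So everything comes down to: solve the above equation for $w\in\cH_0^2(\check{\Omega})$ with the bound \eqref{p1a}.

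Next I would set up the solvability of $(-\Delta-2\zeta_1\cdot\nabla)w=F$ for $F\in L^2(\check{\Omega})$ in the periodic-in-$x_1$, Dirichlet-in-$x'$ setting. Expanding in the Fourier basis $e^{i(\theta+2\pi m)x_1}$ in $x_1$ and in the Dirichlet eigenbasis of $-\Delta'$ on $\omega$ in $x'$, the symbol of $-\Delta-2\zeta_1\cdot\nabla$ on the mode indexed by a frequency $p\in(\theta+2\pi\Z)\times(\text{Dirichlet spectrum})$ is $|p|^2-2i\,\zeta_1\cdot p$ (in the appropriate sense); its modulus is bounded below by a quantity comparable to $\tau\,\mathrm{dist}$-type terms, and the key point, exactly as in \cite[Proposition 4.1]{CKS}, is that the real and imaginary parts $\re\zeta_1,\im\zeta_1$ are orthogonal and of size $\sim\tau$, so that $|\,|p|^2-2i\zeta_1\cdot p\,|\geq c\tau$ uniformly over all modes for $\tau\geq\tau_0$ large. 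This yields a bounded solution operator $G_{\zeta_1}:L^2(\check{\Omega})\to\cH_0^2(\check{\Omega})$ with $\|G_{\zeta_1}F\|_{L^2}\leq C\tau^{-1}\|F\|_{L^2}$ and, interpolating with the trivial $H^2$ bound $\|G_{\zeta_1}F\|_{H^2}\leq C\|F\|_{L^2}$ coming from elliptic regularity plus the symbol estimate, $\|G_{\zeta_1}F\|_{H^s}\leq C\tau^{s-1}\|F\|_{L^2}$ for $s\in[0,2]$.

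Finally I would close the argument by a Neumann-series / fixed-point step: for $\tau\geq\tau_0$ with $\tau_0$ chosen so that $\|V\|_{L^\infty}\,\|G_{\zeta_1}\|_{\B(L^2)}\leq C\|V\|_{L^\infty}\tau^{-1}<\tfrac12$, the operator $I+G_{\zeta_1}V$ is invertible on $L^2(\check{\Omega})$, and $w:=-(I+G_{\zeta_1}V)^{-1}G_{\zeta_1}V\,1$ solves $(-\Delta-2\zeta_1\cdot\nabla+V)w=-V$; equivalently $w=-G_{\zeta_1}(V(1+w))$. Taking the $H^s$ norm of this identity and using the solution-operator bound together with $\|w\|_{L^2}\leq C\tau^{-1}$ gives $\|w\|_{H^s(\check{\Omega})}\leq C\tau^{s-1}(1+\|w\|_{L^2})\leq C\tau^{s-1}$, which is \eqref{p1a}; and by construction $w\in\cH_0^2(\check{\Omega})$, so $u=(1+w)e^{\zeta_1\cdot x}$ solves \eqref{qpe-s}. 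The main obstacle is the uniform-in-$\tau$ lower bound on the symbol of the conjugated Laplacian over all Fourier/Dirichlet modes simultaneously, i.e. verifying that the orthogonality relations \eqref{l1b} and the precise choice \eqref{def-ell}--\eqref{def-tau} of $\ell$ and $\tau$ really do force $|\,|p|^2-2i\zeta_1\cdot p\,|\gtrsim\tau$ with no small-divisor obstruction from the quasi-periodic frequencies; this is precisely where the careful arithmetic of $\ell$ (even versus odd $k$, the shift by $[r]+1$ or $[r]+\tfrac32$) is used, and it is handled by adapting \cite[Proposition 4.1 and Lemma 4.2]{CKS} rather than redone from scratch.
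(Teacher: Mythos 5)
Your overall architecture is the right one and is exactly the one the paper relies on (the paper gives no proof of Lemma \ref{p1}; it simply invokes \cite[Proposition 4.1 and Lemma 4.2]{CKS}): reduce to $(-\Delta-2\zeta_1\cdot\nabla+V)w=-V$ using $\zeta_1\cdot\zeta_1=0$, note that $\zeta_1\in i(\theta+2\pi\Z)\times\C^2$ turns the $\theta$-quasi-periodicity of $u$ into plain periodicity of $w$, build a right inverse $G_{\zeta_1}$ of the conjugated Laplacian with $\|G_{\zeta_1}\|_{\B(L^2)}=O(\tau^{-1})$, and close by a Neumann series. However, the way you propose to construct $G_{\zeta_1}$ does not work: you expand in the Dirichlet eigenbasis of $-\Delta'$ on $\omega$ in the cross-sectional variable, but the first-order term $2\zeta_1'\cdot\nabla'$ is not diagonal in that basis (it does not preserve Dirichlet eigenspaces), so there is no mode-by-mode symbol $|p|^2-2i\zeta_1\cdot p$ in your setting and the claimed lower bound has no meaning there. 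Relatedly, you assert that a Dirichlet-type vanishing of $w$ on $\check{\Gamma}$ is ``absorbed'' into $\cH_0^2(\check{\Omega})$; in fact neither \eqref{qpe-s} nor $\cH_0^2(\check{\Omega})$ (which is just $H^2$ with periodicity of $u$ and $\pd_{x_1}u$ in $x_1$) imposes any lateral boundary condition, and trying to impose one would obstruct the construction. The correct route, which is the one of \cite{CKS} (following H\"ahner's periodic Faddeev-type operator \cite{Ha}), is to extend the problem to a full torus $(0,1)\times(\R\slash 2R\Z)^2$ with $\overline{\omega}\subset(-R,R)^2$ and use genuine Fourier series in \emph{all} variables; there the operator is diagonal, the arithmetic of $\ell$ in \eqref{def-ell} guarantees the symbol stays at distance $\gtrsim\tau$ from zero over the whole lattice, and the resulting $w$ is periodic with no lateral condition, which is all that \eqref{qpe-s} requires.

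A second, smaller inconsistency: you state $\|G_{\zeta_1}F\|_{H^2}\leq C\|F\|_{L^2}$ and then interpolate with $\|G_{\zeta_1}F\|_{L^2}\leq C\tau^{-1}\|F\|_{L^2}$ to get $\|G_{\zeta_1}F\|_{H^s}\leq C\tau^{s-1}\|F\|_{L^2}$; these are incompatible, since interpolating your two stated endpoints would give $\tau^{s\slash2-1}$. The correct endpoint bound is $\|G_{\zeta_1}F\|_{H^2}\leq C\tau\|F\|_{L^2}$ (for frequencies $|p|\lesssim\tau$ the symbol is only bounded below by $c\tau$ while $|p|^2$ can be of size $\tau^2$), and this is what interpolates to $\tau^{s-1}$, matching \eqref{p1a}. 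With the torus construction in place of the cross-sectional eigenbasis and the corrected $H^2$ bound, your Neumann-series closing step is correct.
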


We recall from Subsection \ref{sec-lin} that the space $\cH_0^2(\check{\Omega})$ appearing in Lemma \ref{p1} denotes the set 
$\{ u \in H^2(\check{\Omega}),\ u(1,\cdot)-u(0,\cdot)=\pd_{x_1} u(1,\cdot) - \pd_{x_1} u(0,\cdot)=0\ \mbox{in}\ \omega \}$.

\section{A Carleman estimate for the quasi-periodic Laplace operator in $\check{\Omega}$}
\label{sec-ce}

In this section we derive a Carleman estimate for the Laplace operator in $\check{\Omega}$ with quasi-periodic boundary conditions. We proceed by adapting the Carleman inequality of \cite[Lemma 2.1]{BU} to $x_1$-quasi-periodic functions on $\check{\Omega}$.

\begin{proposition}
\label{p2} 
Let $\xi \in \mathbb S^1$ and pick $a$, $b$ in $\R$, with $a<b$, in such a way that we have
$$ \omega \subset \{ x'\in\R^2;\ \xi \cdot x' \in (a, b) \}. $$ 
Put $d:=b-a$. Then for all $\theta \in [0,2 \pi)$ and all $\tau>0$, the estimate
\bea
& & \frac{8\tau^2}{d} \| e^{-\tau \xi \cdot x'} u \|_{L^2(\check{\Omega})}^2 + 2 \tau \| e^{-\tau \xi \cdot x'} (\xi\cdot \nu)^{1 \slash 2} \pd_\nu u \|_{L^2(\check{\Gamma}_\xi^+)}^2
\nonumber \\
& \leq & \| e^{-\tau \xi \cdot x'} \Delta u \|_{L^2(\check{\Omega})}^2 + 2 \tau  \| e^{-\tau \xi \cdot x'} | \xi\cdot \nu|^{1 \slash 2} \pd_\nu u \|_{L^2(\check{\Gamma}_\xi^-)}^2,
\label{p2a}
\eea
%\bel{p2a}
%\frac{8\tau^2}{d} \| e^{-\tau \xi \cdot x'} u \|_{L^2(\check{\Omega})}^2 + \| e^{-\tau \xi \cdot x'} (2\tau \xi\cdot \nu)^{1 \slash 2} u \|_{L^2((0,1) \times \pd \omega_{\xi}^+)}^2
%\leq \| e^{-\tau \xi \cdot x'} \Delta u \|_{L^2(\check{\Omega})}^2 + \| e^{-\tau \xi \cdot x'} |2\tau \xi\cdot \nu|^{\frac{1}{2}} u \|_{L^2((0,1) \times \pd \omega_{\xi}^-)}^2,
%\ee
holds for every $u \in \cC_\theta^2 \left([0,1] \times \overline{\omega} \right)$ satisfying $u_{\vert \check{\Gamma}}=0$. Here we used the notations $\check{\Gamma}_\xi^\pm:=(0,1) \times \pd \omega_\xi^\pm$.
%Here, by an abuse of notation, we write
%$\xi \cdot \nu(x)$ instead of $\xi \cdot\nu'(x')$, as we have $\nu(x)=(0,\nu'(x'))$ for every $x=(x_1,x') \in \Gamma$.
\end{proposition}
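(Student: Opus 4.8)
The plan is to establish the Carleman estimate \eqref{p2a} by a Rellich-type integration by parts argument, adapting \cite[Lemma 2.1]{BU} to the quasi-periodic setting, the novelty being that one must verify that the boundary terms generated at $x_1=0$ and $x_1=1$ cancel because of the quasi-periodicity of $u$. First I would perform the standard conjugation: for $u \in \cC_\theta^2([0,1] \times \overline{\omega})$ with $u_{|\check{\Gamma}}=0$, set $v := e^{-\tau \xi \cdot x'} u$, so that $v$ satisfies $v_{|\check{\Gamma}}=0$ and $v(1,\cdot)-e^{i\theta} v(0,\cdot) = \pd_{x_1} v(1,\cdot)-e^{i\theta}\pd_{x_1}v(0,\cdot)=0$ in $\omega$ (the conjugating weight depends only on $x'$, so it does not disturb the $x_1$-quasi-periodic boundary conditions). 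A direct computation gives
$$ e^{-\tau \xi \cdot x'}\Delta u = e^{-\tau \xi \cdot x'}\Delta\left(e^{\tau \xi \cdot x'} v\right) = \Delta v + 2\tau\, \xi\cdot\nabla' v + \tau^2 v =: P_+ v + P_- v, $$
where $P_+ v := \Delta v + \tau^2 v$ is the (formally) self-adjoint part and $P_- v := 2\tau\,\xi\cdot\nabla' v$ is the skew-adjoint part. Expanding the $L^2(\check{\Omega})$-norm of the sum yields
$$ \| e^{-\tau \xi \cdot x'}\Delta u \|_{L^2(\check{\Omega})}^2 = \| P_+ v \|_{L^2(\check{\Omega})}^2 + \| P_- v \|_{L^2(\check{\Omega})}^2 + 2\,\Pre{\langle P_+ v, P_- v\rangle_{L^2(\check{\Omega})}}, $$
and the whole estimate reduces to producing a good lower bound for the cross term $2\,\Pre{\langle P_+ v, P_- v\rangle}$ together with keeping $\| P_- v \|_{L^2(\check{\Omega})}^2$ (which controls $\|e^{-\tau\xi\cdot x'}u\|^2$ after using the slab width $d$).

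Next I would compute the cross term by integrating by parts. Writing $\langle \Delta v + \tau^2 v,\ 2\tau\,\xi\cdot\nabla' v\rangle_{L^2(\check{\Omega})}$ and moving derivatives around, the bulk contributions from $\langle \tau^2 v, 2\tau\xi\cdot\nabla' v\rangle$ and from $\langle \Delta v, 2\tau \xi\cdot\nabla' v\rangle$ combine: the term $\int_{\check\Omega} \tau^3 \xi\cdot\nabla'(|v|^2)$ is a pure divergence in $x'$ and produces only a boundary integral over $\check\Gamma$, which vanishes since $v_{|\check\Gamma}=0$; the term coming from the Laplacian splits, via Green's formula in $x'$, into a boundary integral over $\check\Gamma$ involving $\pd_\nu v$ and $\xi\cdot\nu$, plus the key positive bulk term. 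Because $v$ vanishes on $\check\Gamma$, one has $\nabla' v = (\pd_\nu v)\nu$ there, so the boundary integral is $2\tau\int_{\check\Gamma}(\xi\cdot\nu)|\pd_\nu v|^2\, d\sigma$; splitting $\check\Gamma = \check\Gamma_\xi^+ \cup \check\Gamma_\xi^-$ according to the sign of $\xi\cdot\nu$ gives the two Neumann boundary terms on the two sides of \eqref{p2a}. The remaining bulk term is $-2\tau\int_{\check\Omega}\xi\cdot\nabla'\left(\tfrac12|\nabla' v|^2 + \cdots\right)$ type — more precisely one obtains a gradient-squared contribution, and here is where I would invoke the one-dimensional weighted Poincaré/Hardy trick of \cite{BU}: using $\omega \subset \{x': \xi\cdot x'\in(a,b)\}$, write $v(x)$ as an integral of $\pd_{\xi}v$ from the slab face $\{\xi\cdot x'=a\}$, to bound $\int_{\check\Omega}|v|^2 \leq \tfrac{d^2}{?}\int_{\check\Omega}|\xi\cdot\nabla' v|^2$, thereby converting the gradient bulk term into the leading term $\tfrac{8\tau^2}{d}\|e^{-\tau\xi\cdot x'}u\|_{L^2(\check\Omega)}^2$ with the stated constant.

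The main obstacle — and the only genuinely new point compared to \cite{BU} — is the treatment of the boundary terms arising at the \emph{ends} $x_1=0$ and $x_1=1$ of the cylinder, which appear whenever an integration by parts in the $x_1$ variable is performed (for instance inside $\langle \Delta v, 2\tau\xi\cdot\nabla' v\rangle$ when one integrates $\pd_{x_1}^2 v$ by parts). These contribute a term of the form $2\tau\int_\omega \left[\xi\cdot(\pd_{x_1}v\,\overline{\nabla' v})\right]_{x_1=0}^{x_1=1}dx'$ (and similar), and I expect these to cancel exactly: from the quasi-periodicity $v(1,\cdot)=e^{i\theta}v(0,\cdot)$ and $\pd_{x_1}v(1,\cdot)=e^{i\theta}\pd_{x_1}v(0,\cdot)$ one gets $\pd_{x_1}v(1,\cdot)\,\overline{\nabla' v(1,\cdot)} = e^{i\theta}\pd_{x_1}v(0,\cdot)\cdot \overline{e^{i\theta}\nabla' v(0,\cdot)} = \pd_{x_1}v(0,\cdot)\,\overline{\nabla' v(0,\cdot)}$ since $|e^{i\theta}|=1$, so the bracket vanishes; the same cancellation disposes of any $\int_\omega[\ |v|^2\ ]_{x_1=0}^{1}$ or $\int_\omega[\ |\nabla' v|^2\ ]_{x_1=0}^{1}$ contributions. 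One should also note that $\xi$ has no $x_1$-component (\,$\xi\in\mathbb{S}^1\subset\R^2$, acting only on $x'$\,), so $P_- v$ genuinely involves no $\pd_{x_1}$, which keeps the end-face bookkeeping clean. Once these end terms are shown to vanish, the computation is formally identical to the bounded-domain case of \cite{BU}, and collecting the pieces — discarding the nonnegative terms $\|P_+ v\|^2$ and part of $\|P_- v\|^2$ — yields precisely \eqref{p2a}. A final density remark extends the inequality from $\cC_\theta^2([0,1]\times\overline\omega)$ functions vanishing on $\check\Gamma$ to the class actually needed later, though as stated the proposition is already at the $\cC_\theta^2$ level so no further argument is required here.
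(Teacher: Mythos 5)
Your proposal is correct and follows essentially the same route as the paper: conjugate by $e^{-\tau\xi\cdot x'}$, split into self-adjoint and skew-adjoint parts, expand the square, observe that the end-face terms at $x_1=0,1$ cancel by quasi-periodicity and that the lateral boundary terms reduce to $(\xi\cdot\nu)|\partial_\nu v|^2$ via the Dirichlet condition, and finish with the slab-width Poincar\'e bound on $\|P_-v\|^2$. The only organizational difference is that the paper isolates $P_+^1=\partial_{x_1}^2$, shows $\re\langle P_+^1v,P_-v\rangle=0$ (your end-face cancellation), and then applies \cite[Lemma 2.1]{BU} fiber-wise on each cross-section $\{x_1\}\times\omega$ rather than redoing the two-dimensional Rellich computation, so you may simply cite that lemma instead of unrolling it.
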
 
\begin{proof} 
The operator $e^{-\tau \xi \cdot x'}\Delta  e^{\tau \xi \cdot x'}$ decomposes into the sum  $P_+^1 + P_+' + P_-$, with
$$ P_+^1 :=\pd_{x_1}^2,\ P_+' :=\Delta' + \tau^2\ \mbox{and}\ P_- :=2 \tau \xi \cdot \nabla', $$
where the symbol $\Delta'$ (resp., $\nabla'$) stands for the Laplace (resp., gradient) operator with respect to $x' \in \omega$. Thus we get upon setting $v(x):=e^{-\tau \xi \cdot x'} u(x)$ that 
%$I := \| e^{-\tau \xi \cdot x'} \Delta u \|_{L^2(\check{\Omega})}^2 = \| e^{-\tau \xi \cdot x'} \Delta e^{\tau \xi \cdot x'} v \|_{L^2(\check{\Omega})}^2$ expresses as
\beas
%I 
%&:= & \| e^{-\tau \xi \cdot x'} \Delta u \|_{L^2(\check{\Omega})}^2 = \| e^{-\tau \xi \cdot x'} \Delta e^{\tau \xi \cdot x'} v \|_{L^2(\check{\Omega})}^2 \nonumber \\
\| e^{-\tau \xi \cdot x'} \Delta u \|_{L^2(\check{\Omega})}^2 & = & \| e^{-\tau \xi \cdot x'} \Delta e^{\tau \xi \cdot x'} v \|_{L^2(\check{\Omega})}^2 \\
& = & \| (P^1_+ + P_+' + P_-) v \|_{L^2(\check{\Omega})}^2 \\
& = & \| (P^1_+ + P_+') v \|_{L^2(\check{\Omega})}^2 + \| P_- v \|_{L^2(\check{\Omega})}^2 + 2 \re \langle P_+^1 v , P_- v \rangle_{L^2(\check{\Omega})} + 2 \re \langle P_+' v , P_- v \rangle_{L^2(\check{\Omega})}, 
\eeas
and hence
\bel{p2b}
\| P_- v \|_{L^2(\check{\Omega})}^2 + 2 \re \langle P_+' v , P_- v \rangle_{L^2(\check{\Omega})} \leq \| e^{-\tau \xi \cdot x'} \Delta u \|_{L^2(\check{\Omega})}^2 - 2 \re \langle P_+^1 v , P_- v \rangle_{L^2(\check{\Omega})}. 
\ee
On the other hand, since $v$ is quasi-periodic with respect to $x_1 \in (0,1)$, we find upon integrating by parts that
\bel{p2e} 
\re \langle P_+^1 v , P_- v \rangle_{L^2(\check{\Omega})} = -\tau \int_0^1 \int_\omega \nabla' \cdot (\abs{\pd_{x_1} v(x)}^2 \xi ) dx' dx_1 = -\tau \int_{\check{\Gamma}} \abs{\pd_{x_1} v(x)}^2 \xi \cdot \nu(x)  d\sigma(x)=0.
\ee
Here we used the fact, arising from the homogeneous boundary data $v_{\vert \check{\Gamma}}=0$, that $\pd_{x_1} v$ vanishes on $\check{\Gamma}$. 

Next, as the function $w:=v(x_1,\cdot) \in C^2(\overline{\omega})$ satisfies $w_{\vert \pd \omega}=0$ for a.e. $x_1 \in (0,1)$, we deduce from the following Carleman estimate 
$$
\frac{8\tau^2}{d^2} \| w \|_{L^2(\omega)}^2 + 2\tau \int_{\pd \omega} e^{-2\tau  \xi \cdot x'} \xi\cdot \nu(x') \abs{\pd_\nu e^{\tau  \xi \cdot x'} w(x')}^2 d \sigma(x') 
\leq \| P_- w \|_{L^2(\omega)}^2 + 2 \re \langle P_+' w , P_-w \rangle_{L^2(\omega)},
$$
which is borrowed from \cite[Lemma 2.1]{BU}, that
\beas
& & \frac{8\tau^2}{d^2} \| e^{-\tau \xi \cdot x'} u(x_1,\cdot) \|_{L^2(\omega)}^2 + 2 \tau \int_{\pd \omega} e^{-2\tau \xi \cdot x'} \xi\cdot \nu(x) \abs{\pd_\nu u(x_1,x')}^2d \sigma(x') \\
& \leq & \| P_- v(x_1,\cdot) \|_{L^2(\omega)}^2 + 2 \re \langle P_+' v(x_1,\cdot) , P_- v(x_1,\cdot) \rangle_{L^2(\omega)}.
\eeas
Thus, by integrating both sides of the above inequality with respect to $x_1\in (0,1)$, we get that
\bel{p2c}
\frac{8\tau^2}{d^2} \| e^{-\tau \xi \cdot x'} u \|_{L^2(\check{\Omega})}^2 +2 \tau \int_{\check{\Gamma}} e^{-2\tau\xi\cdot x'} \xi \cdot \nu(x) \abs{\pd_\nu u(x)}^2 d \sigma(x)
\leq \| P_-v \|_{L^2(\check{\Omega})}^2 + 2 \re \langle P_+' v , P_-v \rangle_{L^2(\check{\Omega})}. 
\ee
%\bea
%& & \frac{8\tau^2}{d^2} \| e^{-\tau \xi \cdot x'} u \|_{L^2(\check{\Omega})}^2 +2 \tau \int_{\check{\Gamma}} e^{-2\tau\xi\cdot x'} \xi \cdot \nu(x) \abs{\partial_\nu u(x)}^2 d %\sigma(x)
%\nonumber \\
%& \leq & \| P_-v \|_{L^2(\check{\Omega})}^2 + 2 \re \langle P_+' v , P_-v \rangle_{L^2(\check{\Omega})}. \label{p2c}
%\eea

Finally, putting %\eqref{p2b}-\eqref{p2e} and \eqref{p2c} 
\eqref{p2b}--\eqref{p2c} together and recalling \eqref{xi-isf}, we end up getting \eqref{p2a}.
\end{proof}

Let us now perturb the Laplacian in \eqref{p2a} by the multiplier by $V \in L^{\infty}(\check{\Omega})$. Since
$$  \abs{\Delta u}^2 \leq 2 \left( \abs{(-\Delta +V )u}^2+\norm{V}^2_{L^\infty(\Omega)}\abs{u}^2 \right), $$
we find through elementary computations that
\beas
& & \left( \frac{4 \tau^2}{d} - \| V \|_{L^{\infty}(\check{\Omega})}^2 \right) \| e^{-\tau \xi \cdot x'} u \|_{L^2(\check{\Omega})}^2 + \tau \| e^{-\tau \xi \cdot x'} (\xi\cdot \nu)^{\frac{1}{2}} \pd_\nu u \|_{L^2(\check{\Gamma}_\xi^+)}^2
\nonumber \\
& \leq & \| e^{-\tau \xi \cdot x'} (-\Delta+V) u \|_{L^2(\check{\Omega})}^2 + \tau  \| e^{-\tau \xi \cdot x'} |\xi \cdot \nu|^{\frac{1}{2}} \pd_\nu u \|_{L^2(\check{\Gamma}_\xi^-)}^2.
\eeas
As a consequence we have obtained the:
\begin{follow}
\label{car1}
For $M>0$, let $V \in L^\infty(\Omega)$ satisfy \eqref{eq-per} and $\| V \|_{L^\infty(\Omega)} \leq M$. Then, under the conditions of Proposition \ref{p2}, we have
\beas
& & \frac{2 \tau^2}{d} \| e^{-\tau \xi \cdot x'} u \|_{L^2(\check{\Omega})}^2 + \tau \| e^{-\tau \xi \cdot x'} (\xi\cdot \nu)^{\frac{1}{2}} \pd_\nu u \|_{L^2(\check{\Gamma}_\xi^+)}^2
\nonumber \\
& \leq & \| e^{-\tau \xi \cdot x'} (-\Delta+V) u \|_{L^2(\check{\Omega})}^2 + \tau  \| e^{-\tau \xi \cdot x'} |\xi\cdot \nu|^{\frac{1}{2}} \pd_\nu u \|_{L^2(\check{\Gamma}_\xi^-)}^2,
\eeas
provided $\tau \geq \tau_1:=M (d \slash 2)^{\frac{1}{2}}$.
\end{follow}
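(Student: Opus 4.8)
The plan is to read off this corollary from the Carleman estimate \eqref{p2a} of Proposition~\ref{p2} by perturbing the free Laplacian with the bounded multiplier $V$ and absorbing the ensuing zeroth-order term into the left-hand side, at the price of enlarging $\tau$. The starting point is the elementary pointwise bound
\[
\abs{\Delta u}^2 = \abs{(-\Delta+V)u - Vu}^2 \leq 2\abs{(-\Delta+V)u}^2 + 2\norm{V}_{L^\infty(\check\Omega)}^2\abs{u}^2 \quad \text{a.e. in } \check\Omega ,
\]
which rests only on the triangle inequality and the boundedness of $V$; in particular it needs no regularity on $V$, which is why the hypothesis is merely $V\in L^\infty(\Omega)$.

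Next I would multiply this inequality by the weight $e^{-2\tau\xi\cdot x'}$, integrate over $\check\Omega$, and substitute the resulting bound for $\norm{e^{-\tau\xi\cdot x'}\Delta u}_{L^2(\check\Omega)}^2$ into the right-hand side of \eqref{p2a}. Moving the term $2\norm{V}_{L^\infty(\check\Omega)}^2 \norm{e^{-\tau\xi\cdot x'}u}_{L^2(\check\Omega)}^2$ to the left and dividing through by $2$ then produces exactly the intermediate estimate displayed just above the statement of the corollary, in which the coefficient in front of $\norm{e^{-\tau\xi\cdot x'}u}_{L^2(\check\Omega)}^2$ equals $\tfrac{4\tau^2}{d} - \norm{V}_{L^\infty(\check\Omega)}^2$. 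Here one should record that Proposition~\ref{p2} does apply: under its conditions $u$ belongs to $\cC_\theta^2([0,1]\times\overline\omega)$ with $u_{|\check\Gamma}=0$, and by $1$-periodicity of $V$ (or simply since $\check\Omega\subset\Omega$) one has $\norm{V}_{L^\infty(\check\Omega)} \leq \norm{V}_{L^\infty(\Omega)} \leq M$.

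To conclude, it suffices to choose $\tau$ so large that $\tfrac{4\tau^2}{d} - \norm{V}_{L^\infty(\check\Omega)}^2 \geq \tfrac{2\tau^2}{d}$, i.e. $\tau^2 \geq \tfrac{d}{2}\norm{V}_{L^\infty(\check\Omega)}^2$; since $\norm{V}_{L^\infty(\check\Omega)}\leq M$, this holds as soon as $\tau \geq \tau_1 := M (d/2)^{1/2}$, and replacing the coefficient $\tfrac{4\tau^2}{d} - \norm{V}_{L^\infty(\check\Omega)}^2$ on the left by the smaller quantity $\tfrac{2\tau^2}{d}$ yields the asserted inequality. I do not expect any genuine obstacle here, as the argument is a one-line absorption; the substantive work all sits upstream in Proposition~\ref{p2} (namely integrating the cross-sectional Carleman estimate of \cite[Lemma~2.1]{BU} in the variable $x_1$, the quasi-periodic boundary conditions being used precisely to make the cross term $\re\langle P_+^1 v, P_- v\rangle$ vanish in \eqref{p2e}). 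The only point demanding a little care is bookkeeping the numerical constants, $8 \to 4 \to 2$ in front of $\tau^2/d$ and the matching $2\tau \to \tau$ on the two boundary terms, when dividing by $2$.
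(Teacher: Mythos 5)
Your proposal is correct and coincides with the paper's own argument: the authors likewise apply the pointwise bound $\abs{\Delta u}^2 \leq 2(\abs{(-\Delta+V)u}^2 + \norm{V}_{L^\infty}^2\abs{u}^2)$ inside \eqref{p2a}, absorb the zeroth-order term into the left-hand side, and divide by $2$, arriving at the intermediate inequality with coefficient $\tfrac{4\tau^2}{d}-\norm{V}_{L^\infty(\check\Omega)}^2$ before invoking $\tau\geq\tau_1$. Your bookkeeping of the constants and of the threshold $\tau_1=M(d/2)^{1/2}$ matches the paper exactly.
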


Notice from the density of $\{ u \in \cC_\theta^2 \left([0,1] \times \overline{\omega} \right),\ u_{\vert \check{\Gamma}}=0 \}$ in $\{ u \in \cH_\theta^2 (\check{\Omega} ),\ u_{\vert \check{\Gamma}}=0 \}$ that the Carleman estimate of Corollary \ref{car1} remains valid for all $u \in \cH_\theta^2 (\check{\Omega} )$ such that $u_{\vert \check{\Gamma}}=0$.

\section{Proof of Theorem \ref{thm2}}
\label{sec-thm2}
In this section we prove the stability estimate \eqref{thm2a}. To this purpose we set for a.e. $x \in (0,1) \times \R^2$,
$$ V(x):= \left\{ \begin{array}{cl} (V_2-V_1)(x) & \mbox{if}\ x \in \check{\Omega} \\ 0 & \mbox{if}\ x \in (0,1) \times (\R^2 \setminus \omega), \end{array} \right. $$
and start by establishing several technical results that are useful for the proof of \eqref{thm2a}. 

Since we aim for proving the stability estimate \eqref{thm1a} with the aid of \eqref{thm2a}, we recall from Remark \ref{rmk-cst} that we may completely leave aside the question of how the constant $C_\theta$ involved in \eqref{thm2a} depends on $\theta$. Therefore we shall not specify the possible dependence with respect to $\theta$ of the various constants appearing in this section.

\subsection{Preliminary estimate}
\label{sec-pe}
Bearing in mind that $G'$ is a closed neighborhood of $\pd \omega_{\xi_0}^-=\{ x \in \pd \omega;\ \xi_0 \cdot \nu(x) \leq 0 \}$, we pick $\epsilon>0$ so small that
\bel{F'}
\forall \xi \in\mathbb S^1, ( |\xi-\xi_0| \leq \epsilon ) \Rightarrow \left( \{ x \in \pd \omega;\ \xi \cdot \nu(x) \leq \epsilon \} \subset G' \right),
\ee
and we establish the following technical result with the help of the CGO solutions introduced in Section \ref{sec-cgo} and the Carleman estimate derived in Section \ref{sec-ce}.

\begin{lemma}
\label{l100} 
Let $\epsilon$ be as in \eqref{F'}. Then we may find $r_1>0$ such that the estimate
\bel{t3b}
\abs{\int_{\R^2}\int_0^{1} V(x_1,x')e^{-i(2 \pi k x_1+\eta\cdot x')}dx_1dx'}^2\leq C \left(\frac{1}{\tau}+e^{C' \tau}\norm{\Lambda_{{V}_2,\theta}-\Lambda_{{V}_1,\theta}}^2\right),
\ee
holds for all $r \geq r_1$, $\xi \in \{z\in\mathbb S^1;\ \abs{z-\xi_0} \leq \epsilon\}$, $\eta \in \R^2 \setminus \{0\}$ satisfying $\xi \cdot \eta=0$, and $k \in \mathbb Z$.
Here $\tau \geq 1$ is defined by \eqref{def-ell}-\eqref{def-tau}, and the positive constants $C$ and $C'$ depend only on $\omega$, $M_+$, $\epsilon$ and $\xi_0$.
\end{lemma}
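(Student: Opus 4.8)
The plan is to combine two CGO solutions, one for each potential, and exploit the Carleman estimate of Corollary \ref{car1} to absorb the interior remainder terms into the boundary term carried by the DN map on $\check{G}$. First, fix $\xi$ with $\abs{\xi-\xi_0}\leq\epsilon$, choose $\eta\in\R^2\setminus\{0\}$ with $\xi\cdot\eta=0$, and let $k\in\Z$. Using Lemma \ref{p1}, I would build a CGO solution $u_1$ to \eqref{qpe-s} for $V=V_1$ of the form $u_1=(1+w_1)e^{\zeta_1\cdot x}$ with $\zeta_1$ as in \eqref{l1a}, and likewise a CGO solution $u_2$ to the analogous system for $V=V_2$ but with the conjugate frequency, namely of the form $u_2=(1+w_2)e^{\zeta_2\cdot x}$, where $\zeta_2$ is chosen so that $\zeta_1+\overline{\zeta_2}=i(2\pi k,\eta)$; this is exactly the algebraic identity \eqref{l1b}. (More precisely one applies Lemma \ref{p1} with $\xi$ replaced by $-\xi$ for the second solution.) Let $v_2\in H_{\Delta,\theta}(\check{\Omega})$ be the solution of \eqref{eq2} with potential $V_2$ and Dirichlet data $g:=\mathcal{T}_{0,\theta}(u_1)_{|\check{\Gamma}}$, and set $u:=v_2-u_1$. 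Then $u$ solves
\beq
\label{l100eq1}
(-\Delta+V_2)u=(V_1-V_2)u_1\ \mbox{in}\ \check{\Omega},\quad u_{|\check{\Gamma}}=0,
\eeq
together with the two quasi-periodic boundary conditions, so by the remark following Corollary \ref{car1} the Carleman estimate applies to $e^{\tau\xi\cdot x'}u$ — or rather one applies it directly to $u$ with weight $e^{-\tau\xi\cdot x'}$.

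Second, I would multiply the equation $(-\Delta+V_2)u=(V_1-V_2)u_1$ by $\overline{u_2}$ and integrate over $\check{\Omega}$. Integrating by parts twice and using that $u_{|\check{\Gamma}}=0$, $(-\Delta+V_2)u_2=0$, and the matching quasi-periodic conditions on $u$ and $u_2$ (so the boundary terms at $x_1=0,1$ cancel), one obtains
\beq
\label{l100eq2}
\int_{\check{\Omega}}(V_1-V_2)u_1\overline{u_2}\,dx=-\int_{\check{\Gamma}}\pd_\nu u\,\overline{u_2}\,d\sigma(x).
\eeq
On the left side, $u_1\overline{u_2}=(1+w_1)(1+\overline{w_2})e^{(\zeta_1+\overline{\zeta_2})\cdot x}=(1+w_1)(1+\overline{w_2})e^{i(2\pi k,\eta)\cdot x}$, so the leading term reproduces exactly the Fourier coefficient
$\int_{\R^2}\int_0^1 V(x_1,x')e^{-i(2\pi k x_1+\eta\cdot x')}dx_1dx'$ appearing in \eqref{t3b} (recall $V=V_2-V_1$ extended by zero), while the terms involving $w_1$, $\overline{w_2}$, $w_1\overline{w_2}$ are controlled by $\|w_1\|_{L^2}+\|w_2\|_{L^2}\lesssim\tau^{-1}$ via \eqref{p1a} and $\|V_1-V_2\|_{L^\infty}\leq 2M_+$; these give the $O(1/\tau)$ contribution.

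Third — and this is the heart of the argument — I need to bound the boundary integral on the right of \eqref{l100eq2}. Split $\check{\Gamma}=\check{\Gamma}_\xi^-\cup\check{\Gamma}_\xi^+$. On $\check{\Gamma}_\xi^-$ one has $\pd_\nu u=\pd_\nu v_2-\pd_\nu u_1$, and since $\{x\in\pd\omega;\ \xi\cdot\nu(x)\leq\epsilon\}\subset G'$ by \eqref{F'}, the term $\pd_\nu v_2$ restricted there is essentially $\Lambda_{V_2,\theta}g$, while $\pd_\nu u_1$ is itself $\Lambda_{V_1,\theta}g$ (because $u_1$ is the CGO solution for $V_1$ with the same Dirichlet data $g$), so $\pd_\nu u_{|\check{G}}=(\Lambda_{V_2,\theta}-\Lambda_{V_1,\theta})g$ — this part is bounded by $\|\Lambda_{V_2,\theta}-\Lambda_{V_1,\theta}\|\,\|g\|_{\sH_\theta(\check{\Gamma})}$ times $\|\overline{u_2}\|_{L^2(\check{\Gamma})}$, and one estimates $\|g\|_{\sH_\theta(\check{\Gamma})}\leq C\|u_1\|_{H_\Delta(\check{\Omega})}\leq Ce^{c\tau}$ using \eqref{esti}, and $\|u_2\|_{L^2(\check{\Gamma})}$ by a trace inequality against $e^{c\tau}$; this produces the $e^{C'\tau}\|\Lambda_{V_2,\theta}-\Lambda_{V_1,\theta}\|^2$ term after applying Cauchy–Schwarz and Young's inequality. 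The remaining piece, the integral over $\check{\Gamma}_\xi^+$ (the shadowed face not contained in $G'$) of $\pd_\nu u\,\overline{u_2}$, must be absorbed: here one writes $\overline{u_2}=(1+\overline{w_2})e^{\overline{\zeta_2}\cdot x}$, notes that $\re(\overline{\zeta_2}\cdot x)=\tau\xi\cdot x'$ up to bounded terms, and uses the Carleman estimate of Corollary \ref{car1} applied to $u$ — whose left-hand side controls $\tau\|e^{-\tau\xi\cdot x'}(\xi\cdot\nu)^{1/2}\pd_\nu u\|_{L^2(\check{\Gamma}_\xi^+)}^2$ — to dominate this boundary term, at the cost of the right-hand side $\|e^{-\tau\xi\cdot x'}(-\Delta+V_2)u\|_{L^2(\check{\Omega})}^2+\tau\|e^{-\tau\xi\cdot x'}|\xi\cdot\nu|^{1/2}\pd_\nu u\|_{L^2(\check{\Gamma}_\xi^-)}^2$. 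The first of these equals $\|e^{-\tau\xi\cdot x'}(V_1-V_2)u_1\|_{L^2}^2$, which after inserting $u_1=(1+w_1)e^{\zeta_1\cdot x}$ and $\re(\zeta_1\cdot x)=-\tau\xi\cdot x'+\cdots$ is $O(1)$ uniformly in $\tau$ — this is precisely why the linear-weight Carleman estimate of \cite{BU} is the right tool, as the weights on the CGO solution and the Carleman exponential cancel. The $\check{\Gamma}_\xi^-$ boundary term on the right of the Carleman estimate is again handled by the DN-map bound since $\check{\Gamma}_\xi^-\subset\check{G}$, contributing another $e^{C'\tau}\|\Lambda_{V_2,\theta}-\Lambda_{V_1,\theta}\|^2$. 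Collecting everything and dividing by $\tau$ where appropriate yields \eqref{t3b}, with $r_1$ chosen so that $\tau\geq\max(1,\tau_0,\tau_1)$ for all admissible $(k,\eta,r,\theta)$ with $r\geq r_1$, which is possible by the lower bound $\tau>2\pi r$ in \eqref{ll1a}.

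The main obstacle I anticipate is the careful bookkeeping of the exponential weights: one must verify that $\re(\zeta_1\cdot x)$ and $\re(\overline{\zeta_2}\cdot x)$ differ from $\mp\tau\xi\cdot x'$ only by terms that are bounded uniformly in $\tau$ on $\check{\Omega}$ (the contributions of $i\pi k$, $i\eta/2$ and $i\ell$ to the real part, which are controlled because $x_1\in(0,1)$ is bounded and $\ell$ is purely real times the fixed direction), so that the Carleman weight $e^{-\tau\xi\cdot x'}$ exactly neutralizes the exponential growth of the CGO amplitudes on $\check{\Gamma}_\xi^+$, while on the "good" part of the boundary the competing factor $e^{c\tau}$ is tolerated because it is paired with the small quantity $\|\Lambda_{V_2,\theta}-\Lambda_{V_1,\theta}\|$. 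A secondary technical point is justifying the integration by parts leading to \eqref{l100eq2} at the regularity $u\in H_{\Delta,\theta}(\check{\Omega})$, which is done by density of $\cC_\theta^\infty([0,1]\times\overline{\omega})$ and the trace operators $\mathcal{T}_{0,\theta},\mathcal{T}_{1,\theta}$ from Section \ref{sec-dec}.
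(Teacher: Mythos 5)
Your overall strategy is the paper's: form an auxiliary function $u$ vanishing on $\check{\Gamma}$, pair the source term against the second CGO solution to produce the Fourier coefficient of $V$, split the boundary term according to \eqref{F'}, and absorb the unmeasured part via Corollary \ref{car1}. However, you have interchanged the roles of the two CGO solutions in a way that breaks precisely the weight bookkeeping you yourself identify as the heart of the argument. With your choice $u:=v_2-u_1$, the source is $(V_1-V_2)u_1$ with $\re(\zeta_1\cdot x)=-\tau\,\xi\cdot x'$, so the interior term on the right of the Carleman estimate is $\norm{e^{-\tau\xi\cdot x'}(V_1-V_2)u_1}_{L^2(\check{\Omega})}^2$, whose integrand has modulus $e^{-2\tau\xi\cdot x'}\abs{V_1-V_2}\,\abs{1+w_1}$; since $0\in\omega$ and $\omega$ is open, $\xi\cdot x'$ takes negative values on a set of positive measure, so this quantity is of order $e^{2c_\omega\tau}$, not $O(1)$: the two exponentials add rather than cancel. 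The same mismatch occurs on the boundary: you pair $\pd_\nu u$ against $\overline{u_2}$, whose modulus is $\abs{1+w_2}e^{+\tau\xi\cdot x'}$, whereas Corollary \ref{car1} controls $e^{-\tau\xi\cdot x'}\pd_\nu u$ on $\check{\Gamma}_\xi^+$; the discrepancy $e^{2\tau\xi\cdot x'}$ is again exponentially large there. Replacing the weight by $e^{+\tau\xi\cdot x'}$ (i.e. applying Proposition \ref{p2} with $-\xi$) would restore the interior cancellation but reverses the geometry: it would bound the Neumann data on $\check{\Gamma}_\xi^-\subset\check{G}$ by the unmeasured data on $\check{\Gamma}_\xi^+$, which is useless here.

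The repair is simply to swap the two CGO solutions, which is what the paper does: let $v_1$ solve the $V_1$-problem with Dirichlet data $\mathcal{T}_{0,\theta}u_2$ and set $u:=v_1-u_2$, so that the source $Vu_2$ carries the growing exponential $e^{+\tau\xi\cdot x'}$ (neutralized by the Carleman weight $e^{-\tau\xi\cdot x'}$), and test the equation against $\overline{u_1}$, whose decaying factor $e^{-\tau\xi\cdot x'}$ matches the weight in the boundary terms of Corollary \ref{car1}. With that correction the rest of your outline (the $O(\tau^{-1})$ remainder from \eqref{p1a}, the identification of $\pd_\nu u$ on $\check{G}$ with $(\Lambda_{V_2,\theta}-\Lambda_{V_1,\theta})f$, and the bound $\norm{f}_{\sH_\theta(\check{\Gamma})}\leq C\tau e^{c_\omega\tau}$ via \eqref{esti}) goes through exactly as in the paper. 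Note also that your product $u_1\overline{u_2}$ yields $e^{+i(2\pi k,\eta)\cdot x}$ rather than $e^{-i(2\pi k,\eta)\cdot x}$; this is a harmless relabeling $(k,\eta)\mapsto(-k,-\eta)$ since $V$ is real, but it is another symptom of the same inversion.
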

\begin{proof} 
We first introduce the sets 
\bel{def-xieps}
\pd \omega_{\xi,\epsilon}^+:=\{ x \in \pd \omega;\  \xi \cdot \nu(x) > \epsilon\}\ \mbox{and}\ \pd \omega_{\xi,\epsilon}^-:=\{ x \in \pd \omega;\ \xi \cdot \nu(x) \leq \epsilon \},
\ee
and we establish the orthogonality identity \eqref{t3a} below, with the aid of the CGO solutions of Lemma \ref{p1}. To this end we choose $r$ sufficiently large, namely
$r \geq r_1:=\max (1+\tau_0,\tau_1 )$, where $\tau_0$ (resp., $\tau_1$) is the constant introduced in Lemma \ref{p1} (resp., Corollary \ref{car1}), in such a way that we have
\bel{ea6} 
\tau=\sqrt{\frac{\abs{\eta}^2}{4}+\pi^2 k^2+\abs{\ell(r,k,\eta,\theta)}^2} \geq  \max(\tau_0,\tau_1),
\ee
by \eqref{def-ell}-\eqref{def-tau}.
%the real number $\ell(r,k,\eta,\theta)$ being the same as in Lemma \ref{l1}.
Next, for $j=1,2$, we define $\zeta_j$ as in \eqref{l1a} and we denote by 
%and satisfy \eqref{l1b}. 
\bel{ea6b} 
u_j(x)=( 1+w_j(x) ) e^{\zeta_j \cdot x},\ x \in \check{\Omega},
\ee
the $\cH_{\theta}^2(\check{\Omega})$-solution to \eqref{qpe-s} associated with $V=V_j$, which is given by Lemma \ref{p1}. For further reference we recall that $w_j$ satisfies the condition \eqref{p1a} with $s=1$, entailing that
\bel{ea7}
\| w_j \|_{H^1(\check{\Omega})} \leq C,
\ee
for some constant $C=C(\omega,M_+)>0$.
Next, if $v_1$ satisfies the system
\bel{eq3}
\left\{
\begin{array}{rcll}
 (-\Delta + V_1 )v_1 & = & 0, & \mbox{in}\ \check{\Omega},
\\
v_1 & = & \mathcal T_{0,\theta} u_2, & \mbox{on}\ \check{\Gamma}, \\
v_1 (1,\cdot) - e^{i \theta} v_1(0,\cdot) & = & 0, & \mbox{on}\ \omega, \\
\pd_{x_1} v_1 (1,\cdot) - e^{i \theta} \pd_{x_1} v_1(0,\cdot) & = & 0, & \mbox{on}\ \omega,
\end{array}
\right.
\ee
then it is easy to check that the function $u:=v_1-u_2$ is solution to the following BVP:
\bel{eq4}
\left\{ 
\begin{array}{rcll}
(-\Delta + V_1) u & = & V u_2, & \mbox{in}\ \check{\Omega}, \\
u & = & 0, &\mbox{on}\ \check{\Gamma}, \\
u(1,\cdot) - e^{i\theta} u(0,\cdot) & = & 0, & \mbox{on}\ \omega, \\
\pd_{x_1} u(1,\cdot) - e^{i\theta} \pd_{x_1} u(0,\cdot) & = & 0, & \mathrm{on}\ \omega. 
\end{array}
\right.
\ee
Moreover, as $V u_2\in L^2(\check{\Omega})$ and $0$ belongs to the resolvent set of $\mathcal A_{V_1,\theta}$, then $u=\mathcal A_{V_1,\theta}^{-1} V u_2 \in \cH^2_\theta(\check{\Omega})$.
%satisfies the estimate 
%$$\norm{u}_{H^2(\check{\Omega})} \leq C \norm{u_2}_{L^2(\check{\Omega})},$$
%by \eqref{resb} with $V=V_1$. Notice that the above constant $C>0$ depends only on $\omega$ and $M_\pm$. 
Further, bearing in mind that $(-\Delta+V_1)u_1=0$ in $\check{\Omega}$, from the first line of \eqref{qpe-s} with $V=V_1$, we deduce from \eqref{eq4} and the Green formula that
$$
\int_{\check{\Omega}} V u_2 \overline{u_1} d x =\int_{\check{\Omega}} (-\Delta+V_1)u \overline{u_1}d x
%= -\int_{\check{\Omega}} u \overline{(-\Delta+V_1)u_1} d x + \int_{\check{\Gamma}} (\partial_\nu u) \overline{u_1} d\sigma(x)
= \int_{\check{\Gamma}} (\pd_\nu u) \overline{u_1} d\sigma(x).
$$
In view of \eqref{def-xieps}, this can be equivalently rewritten as
\bel{t3a}
\int_{\check{\Omega}} V u_2\overline{u_1} d x
=\int_{\check{\Gamma}_{\xi,\epsilon}^+} (\pd_\nu u ) \overline{u_1} d\sigma(x) +\int_{\check{\Gamma}_{\xi,\epsilon}^-} (\pd_\nu u) \overline{u_1} d\sigma(x),
\ee
where $\check{\Gamma}_{\xi,\epsilon}^\pm := (0,1) \times \pd \omega_{\xi,\epsilon}^\pm$.
With reference to \eqref{ea6b}, we deduce from \eqref{ea7} and the continuity of the trace from $H^1(\check{\Omega})$ into $L^2(\check{\Gamma})$, that
\bea
\abs{ \int_{\check{\Gamma}_{\xi,\epsilon}^{\pm}} (\pd_\nu u) \overline{u_1} d\sigma(x)} 
& \leq & \int_{\check{\Gamma}_{\xi,\epsilon}^{\pm}} \abs{(\pd_\nu u ) e^{-\tau \xi \cdot x'} (1+w_1(x))} d\sigma(x') dx_1 \nonumber \\
& \leq & C \| e^{-\tau\xi\cdot x'} \pd_\nu u \|_{L^2(\check{\Gamma}_{\xi,\epsilon}^{\pm})}, \label{t2b}
\eea
where $C$ is another positive constant depending only on $\omega$ and $M_+$. Moreover, we have
$$ 
\epsilon \| e^{-\tau \xi \cdot x'} \pd_\nu u \|_{L^2(\check{\Gamma}_{\xi,\epsilon}^+)}^2 \leq \| e^{-\tau \xi \cdot x'} (\xi \cdot \nu)^{\frac{1}{2}} \pd_\nu u \|_{L^2(\check{\Gamma}_{\xi,\epsilon}^+)}^2
\leq \| e^{-\tau \xi \cdot x'} (\xi \cdot \nu)^{\frac{1}{2}} \pd_\nu u \|_{L^2(\check{\Gamma}_{\xi,\epsilon}^+)}^2, $$
from the very definition of $\pd \omega_{\xi,\epsilon}^+$ and the imbedding $\pd \omega_{\xi,\epsilon}^+ \subset \pd \omega_{\xi}^+$. Therefore, applying the Carleman estimate of Corollary \ref{car1} to the $\cH_\theta^2(\check{\Omega})$-solution $u$ of \eqref{eq4}, which is permitted since $\tau \geq \tau_1$, we get that
\bea
\tau \epsilon \| e^{-\tau \xi \cdot x'} \pd_\nu u \|_{L^2(\check{\Gamma}_{\xi,\epsilon}^+)}^2 
& \leq & C \left( \| e^{-\tau \xi \cdot x'} (-\Delta+V_1) u \|_{L^2(\check{\Omega})}^2 + \tau \| e^{-\tau \xi \cdot x'} |\xi \cdot \nu|^{\frac{1}{2}} \pd_\nu u \|_{L^2(\check{\Gamma}_\xi^-)}^2 \right) \nonumber \\
& \leq & C \left( \| e^{-\tau \xi \cdot x'} V u_2 \|_{L^2(\check{\Omega})}^2 + \tau \| e^{-\tau \xi \cdot x'} |\xi \cdot \nu|^{\frac{1}{2}} \pd_\nu u \|_{L^2(\check{\Gamma}_\xi^-)}^2 \right). \label{ea8}
\eea
Next, as $e^{-\tau \xi \cdot x'} V u_2(x) =  e^{-\tau \xi \cdot x'} V  e^{\zeta_2 \cdot x} (1+w_2(x)) =  e^{-i \left( \pi k x_1 + \frac{\eta \cdot x'}{2} - \ell \cdot x \right)} V (1+w_2(x))$ for a.e. $x \in \check{\Omega}$,
%$$  e^{-\tau \xi \cdot x'} V u_2(x) =  e^{-\tau \xi \cdot x'} V  e^{\zeta_2 \cdot x} (1+w_2(x)) =  e^{-i \left( \frac{\pi k}{2} x_1 + \frac{\eta \cdot x'}{2} - \ell \cdot x \right)} V %(1+w_2(x)), $$
from \eqref{l1a} and \eqref{ea6b}, we have $| e^{-\tau \xi \cdot x'} V u_2(x) | = | V(x) | | 1 + w_2(x) |$ by \eqref{def-ell}, so it follows from \eqref{ea7} that
\bel{ea9}
\| e^{-\tau \xi \cdot x'} V u_2 \|_{L^2(\check{\Omega})} \leq M_+ ( | \omega | + C). 
\ee
Further, bearing in mind  that $\pd \omega_{\xi}^- \subset \pd \omega_{\xi,\epsilon}^-$ and $| \xi \cdot \nu | \leq 1$ on $\pd \omega_{\xi,\epsilon}^-$, we get that
$$ \| e^{-\tau \xi \cdot x'} |\xi \cdot \nu|^{\frac{1}{2}} \pd_\nu u \|_{L^2(\check{\Gamma}_\xi^-)} \leq \| e^{-\tau \xi \cdot x'} \pd_\nu u \|_{L^2(\check{\Gamma}_{\xi,\epsilon}^-)}, $$
which, together with \eqref{ea8}-\eqref{ea9}, yield the estimate
$$
\| e^{-\tau \xi \cdot x'} \pd_\nu u \|_{L^2(\check{\Gamma}_{\xi,\epsilon}^+)}^2 
\leq \frac{C}{\epsilon} \left( \frac{1}{\tau} + \| e^{-\tau \xi \cdot x'} \pd_\nu u \|_{L^2(\check{\Gamma}_{\xi,\epsilon}^-)}^2 \right),
$$
where $C=C(\omega, M_\pm)>0$.
From this and \eqref{t3a}-\eqref{t2b}, it follows that
\bel{t2f}
\abs{\int_{\check{\Omega}} V u_2 \overline{u_1} dx} \leq C \left( \frac{1}{\tau} + \| e^{-\tau \xi \cdot x'} \pd_\nu u \|_{L^2(\check{\Gamma}_{\xi,\epsilon}^-)}^2
\right)^{\frac{1}{2}},
\ee
the positive constant $C$ depending this time on $M_\pm$ and $G'$. 

On the other hand, with reference to \eqref{l1a}-\eqref{l1b} and \eqref{ea6b}, we find through direct calculation that
\bel{ea10}
\int_{\check{\Omega}} V u_2 \overline{u_1} dx= \int_{(0,1) \times \R^2} e^{-i ( 2 \pi k x_1 + \eta \cdot x')} V(x_1,x') dx_1dx'+ \int_{\check{\Omega}} R(x)  dx,
\ee
where
$$ R(x) := e^{-i( 2 \pi k x_1 + \eta \cdot x')} V(x) \left( w_2(x)+ \overline{w_1(x)} +w_2(x) \overline{w_1(x)} \right),\ x=(x_1,x') \in \check{\Omega}.$$ 
Since $\| w_j \|_{L^2(\check{\Omega})}$, for $j=1,2$, is bounded (up to some multiplicative constant) from above by $\tau^{-1}$, according to \eqref{p1a}, we obtain that
$$ \abs{\int_{\check{\Omega}} R(x)  dx} \leq M_+ \left( | \omega |^{\frac{1}{2}} \left( \| w_1\|_{L^2(\check{\Omega})} + \| w_2 \|_{L^2(\check{\Omega})} \right) + 
\| w_1 \|_{L^2(\check{\Omega})}\| w_2 \|_{L^2(\check{\Omega})} \right) \leq C \tau^{-1}, $$
where $C$ is independent of $\tau$. It follows from this and \eqref{t2f}-\eqref{ea10} that
\bea
\abs{\int_{(0,1) \times \R^2} e^{-i( 2 \pi k x_1 + \eta\cdot x')} V(x) dx}^2 
%\leq \frac{C}{\tau}+ 
& \leq & C \left( \frac{1}{\tau}+ \| e^{-\tau \xi \cdot x'} \pd_\nu u \|_{L^2(\check{\Gamma}_{\xi,\epsilon}^-)}^2 \right) \nonumber \\
& \leq & C \left( \frac{1}{\tau} + e^{c_\omega \tau} \norm{\pd_\nu u}_{L^2(\check{\Gamma}_{\xi,\epsilon}^-)}^2 \right). \label{t3d}
\eea
where $c_\omega:=\max \{ |x'|; x' \in \overline{\omega} \}$ and $C=C(\omega,M_\pm,G')>0$. Finally, upon recalling that $u=v_1-u_2$, where $v_1$ is solution to \eqref{eq3} and $u_2$ satisfies \eqref{qpe-s} with $V=V_2$, we see that
$$ \pd_\nu u=(\Lambda_{{V}_2,\theta} - \Lambda_{{V}_1,\theta} ) f,\ f=\mathcal T_{0,\theta} u_2. $$
Since $\pd \omega_{\xi,\epsilon}^- \subset G'$, by \eqref{F'}, we have $\norm{\pd _\nu u}_{L^2(\check{\Gamma}_{\xi,\epsilon}^-)} \leq  \norm{\Lambda_{{V}_2,\theta}-\Lambda_{{V}_1,\theta}} \norm{\mathcal T_{0,\theta}u_2}_{\sH_\theta(\check{\Gamma})}$, and hence
$$
\norm{\pd _\nu u}_{L^2(\check{\Gamma}_{\xi,\epsilon}^-)} \leq C \norm{\Lambda_{{V}_2,\theta}-\Lambda_{{V}_1,\theta}} \| u_2 \|_{H_\Delta(\check{\Omega})}
\leq C \tau e^{c_\omega \tau}\norm{\Lambda_{{V}_2,\theta}-\Lambda_{{V}_1,\theta}},
$$
by \eqref{esti} and \eqref{p1a}, the constant $C>0$ depending only on $\omega$, $M_\pm$ and $G'$. This and \eqref{t3d} entail \eqref{t3b}.
\end{proof}

\subsection{Two technical results}
Prior to completing the derivation of Theorem \ref{thm1} we collect two technical results that are needed in the remaining part of proof. 

The first statement, which makes use of the following notation
$$ D_s := \{ x' \in \R^2;\ \abs{x'}<s \},\ s \in (0,+\infty),$$
is borrowed from \cite[Theorem 1]{V} and \cite[Theorem 3]{AE}.
%Let us recall the following result, which follows from Theorem 3 in \cite{AE} (see also  \cite{V}), on the continuous dependence in the analytic continuation problem.

\begin{lemma}
\label{p8} 
For $R>0$, let $f:\ D_{2 R}\subset \R^2 \to \mathbb C$ be real analytic and satisfy the condition 
$$ \exists c>0,\ \exists \varrho \in (0,1],\ \forall \beta \in\mathbb N^2,\ \norm{\pd^\beta f}_{L^\infty(D_{2 R})} \leq \frac{c \abs{\beta}!}{(\varrho R)^{\abs{\beta}}}. $$
Then for any $E \subset D_{\frac{R}{2}}$ with positive Lebesgue measure, we may find two constants $N=N(\varrho,\abs{E},R)>0$ and $\upsilon=\upsilon(\varrho,\abs{E},R) \in (0,1)$, such that we
have:
$$ \norm{f}_{L^\infty(D_R)} \leq N c^{1-\upsilon} \norm{f}_{L^1(E)}^{\upsilon}. $$
\end{lemma}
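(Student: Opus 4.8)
The plan is to treat Lemma \ref{p8} as a quantitative analytic continuation (propagation of smallness) estimate and to deduce it from the results of Vessella \cite{V} and Apraiz--Escauriaza \cite{AE}; below I indicate the mechanism one would follow. The first step is to convert the hypothesis on the derivatives of $f$ into a statement about a genuine holomorphic function. Expanding $f$ in a Taylor series around an arbitrary point $x_0 \in D_{2R}$ and using $\abs{\beta}! \leq 2^{\abs{\beta}} \beta!$, the bound $\norm{\pd^\beta f}_{L^\infty(D_{2R})} \leq c\abs{\beta}!(\varrho R)^{-\abs{\beta}}$ shows that the series converges on a polydisc of radius comparable to $\varrho R$ centered at $x_0$, with sum bounded by a fixed multiple of $c$. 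Patching these local extensions yields a holomorphic function $F$ on a complex neighborhood $\mathcal{W} \subset \C^2$ of $D_{2R}$, of width comparable to $\varrho R$, with $\norm{F}_{L^\infty(\mathcal{W})} \leq C c$ for an absolute constant $C$. From now on $c$ enters only through this sup bound.

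The core of the argument is a two-part localization. First, a local estimate: since $\abs{E}>0$ one fixes a point of density one of $E$ and a small ball $B \subset D_{R/2}$ centered at it, and invokes the Remez-- (or Tur\'an--) type inequality for real-analytic functions — equivalently the Brudnyi--Ganzburg/Nadirashvili refinement used in \cite{V,AE} — to get
$$ \norm{f}_{L^\infty(B)} \leq N_1 \norm{f}_{L^1(E \cap B)}^{\alpha} \norm{f}_{L^\infty(2B)}^{1-\alpha} $$
for some $\alpha = \alpha(\varrho,\abs{E},R) \in (0,1)$, where $2B \subset D_{2R}$. Second, a global propagation step: since $F$ is holomorphic on $\mathcal{W}$, restricting $\log\abs{F}$ to complex lines and applying the Hadamard three-circle theorem gives a three-spheres inequality which, chained along a finite covering of $D_R$ by balls contained in $D_{2R}$, yields
$$ \norm{f}_{L^\infty(D_R)} \leq N_2 \norm{f}_{L^\infty(B)}^{\beta} \norm{F}_{L^\infty(\mathcal{W})}^{1-\beta} $$
for some $\beta = \beta(\varrho,R) \in (0,1)$. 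Combining the two displays, using $\norm{f}_{L^1(E\cap B)} \leq \norm{f}_{L^1(E)}$, $\norm{f}_{L^\infty(2B)} \leq \norm{F}_{L^\infty(\mathcal{W})} \leq Cc$, and absorbing all powers of $\norm{F}_{L^\infty(\mathcal{W})}$ into $c$, one obtains the claimed inequality with $\upsilon := \alpha\beta \in (0,1)$ and $N$ depending only on $\varrho$, $\abs{E}$ and $R$.

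The routine parts are the holomorphic extension and the three-circle chaining; the genuinely hard ingredient — and the reason the statement is quoted rather than proved here — is the local estimate bounding $\norm{f}_{L^\infty(B)}$ by the $L^1$ norm of $f$ over a subset of merely positive measure (with possibly empty interior). This is precisely the measure-theoretic strengthening of the classical three-spheres inequality established in \cite{V} and \cite{AE}, and it rests on Remez/Tur\'an-type polynomial inequalities together with a careful control of how the exponents degenerate as $\abs{E} \to 0$. A secondary technical point will be to ensure that the width of $\mathcal{W}$, and hence all constants, depend on $\varrho$ and $R$ exactly as stated; this is handled by the homogeneity of the hypothesis under the rescaling $x' \mapsto x'/R$.
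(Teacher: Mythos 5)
The paper does not prove Lemma \ref{p8} at all: it is quoted verbatim as a known result, with the reader referred to \cite[Theorem 1]{V} and \cite[Theorem 3]{AE}. Your outline is a faithful reconstruction of how those references proceed (holomorphic extension of uniform width $\sim \varrho R$ via the bound $\abs{\beta}! \leq 2^{\abs{\beta}}\beta!$, a Remez/Tur\'an-type local estimate from the positive-measure set, and Hadamard three-circles chaining to reach all of $D_R$), and you correctly isolate the measure-theoretic local estimate as the ingredient that must be cited rather than rederived — which is exactly the paper's stance, so there is no substantive divergence to report.
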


Let us denote by $\hat{u}$ the Fourier transform with respect to $x' \in \R^2$ of $u$, i.e. 
$$ \hat{u}(\eta) := \frac{1}{2 \pi} \int_{\R^2} u(x') e^{-i \eta \cdot x'} d x',\ \eta \in \R^2. $$
Then the second result is as follows.

\begin{lemma}
\label{lem1} 
There exists $C>0$ such that the estimate 
\bel{lem1a}
\| G \|_{H^{-1}( (0,1) \times\R^2 )}\leq C \left\| \sum_{k \in \Z} (1+\abs{(k,\cdot)}^2)^{-\frac{1}{2}} \widehat{G_k} \right\|_{L^2(\R^2)},
\ee
holds for every $G \in L^2( (0,1)\times\R^2)$, with $G_k(x'):=\int_0^{1} G(x_1,x') e^{-i 2 \pi k x_1} dx_1$, $x'\in\R^2$, $k \in \Z$.
\end{lemma}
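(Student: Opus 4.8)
The plan is to express the $H^{-1}((0,1)\times\mathbb{R}^2)$-norm of $G$ through the Fourier series in $x_1$ and the Fourier transform in $x'$, and then recognize the right-hand side of \eqref{lem1a} as controlling exactly this quantity. Writing $G(x_1,x')=\sum_{k\in\mathbb{Z}} G_k(x')e^{2\pi i k x_1}$, where $G_k$ is as defined in the statement, and taking Fourier transforms in $x'$, the natural Hilbert-space characterization of $H^{-1}((0,1)\times\mathbb{R}^2)$ (with periodicity in $x_1$) gives
\bel{lem1-norm}
\| G \|_{H^{-1}((0,1)\times\R^2)}^2 \asymp \sum_{k \in \Z} \int_{\R^2} \frac{|\widehat{G_k}(\eta)|^2}{1+4\pi^2 k^2 + |\eta|^2}\, d\eta,
\ee
up to harmless absolute constants absorbing the $4\pi^2$ factor and the equivalence of $1+|(k,\eta)|^2$ with $1+k^2+|\eta|^2$. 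So the first step is to justify \eqref{lem1-norm}, which is just the standard spectral description of negative-order Sobolev norms on a product of a torus and Euclidean space; it follows from duality with $H^1_0((0,1)\times\R^2)$ together with Plancherel's theorem applied separately in each variable.

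Once \eqref{lem1-norm} is in hand, the remaining task is purely pointwise in $\eta$: to bound
\beas
\sum_{k \in \Z} \frac{|\widehat{G_k}(\eta)|^2}{1+k^2+|\eta|^2}
\eeas
by $\left( \sum_{k \in \Z} (1+|(k,\eta)|^2)^{-1/2} |\widehat{G_k}(\eta)| \right)^2$ pointwise, and then integrate in $\eta$. For fixed $\eta$, set $a_k := (1+k^2+|\eta|^2)^{-1/2}|\widehat{G_k}(\eta)| \geq 0$. Then the left-hand side is $\sum_k a_k^2$ and the right-hand side is $\left( \sum_k a_k \right)^2$, and the elementary inequality $\sum_k a_k^2 \leq \left( \sum_k a_k \right)^2$ for nonnegative terms finishes the pointwise estimate. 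Integrating over $\eta \in \R^2$ and combining with \eqref{lem1-norm} yields \eqref{lem1a}.

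The only genuine subtlety, and the step I expect to require the most care, is the first one: making precise the identification \eqref{lem1-norm} of the $H^{-1}$-norm on $(0,1)\times\R^2$. One has to be careful about which $H^{-1}$ is meant — here it should be the dual of $H^1_0((0,1)\times\R^2)$ with the pivot space $L^2$, so that $G\in L^2$ acts by integration — and to check that the mixed Fourier series / Fourier transform representation diagonalizes $-\Delta + 1$ on this space, so that the dual norm is indeed the weighted $\ell^2 L^2$ sum above. This is standard but should be stated carefully, perhaps by noting that $\{ e^{2\pi i k x_1} e^{i\eta\cdot x'} \}$ furnishes a (continuous) orthogonal decomposition on which the operator acts by multiplication by $1+4\pi^2 k^2 + |\eta|^2$. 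The constant $C$ in \eqref{lem1a} then depends only on the constants implicit in \eqref{lem1-norm} and hence is absolute, as claimed.
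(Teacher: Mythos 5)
Your first step is exactly the paper's proof: the paper also argues by duality against $w\in H_0^1((0,1)\times\R^2)$, writes $\langle G,w\rangle_{L^2}=\int_{\R^2}\sum_{k}\widehat{G_k}(\eta)\overline{\widehat{w_k}(\eta)}\,d\eta$ by Plancherel in each variable, and concludes by weighted Cauchy--Schwarz together with the bound $\sum_k\int_{\R^2}(1+\abs{(k,\eta)}^2)\abs{\widehat{w_k}(\eta)}^2\,d\eta\leq C^2\norm{w}_{H^1}^2$; this is precisely the one-sided version of your norm identification, and it is the direction that holds without difficulty for $w\in H_0^1$ because the vanishing traces at $x_1=0,1$ remove the boundary terms in $\widehat{(\pd_{x_1}w)_k}=2\pi i k\,\widehat{w_k}$ (only this upper bound is needed, so you need not worry about the reverse inequality in your claimed equivalence). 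The only point to correct is your second step, which stems from reading the right-hand side of \eqref{lem1a} as the $L^2_\eta$-norm of the pointwise sum $\sum_k(1+\abs{(k,\eta)}^2)^{-1/2}\abs{\widehat{G_k}(\eta)}$. The literal complex sum cannot be what is meant (taking $G_1=-G_{-1}=\phi$ and all other modes zero makes that sum vanish identically while $G\neq 0$), and the way the lemma is invoked in \eqref{t3e} shows the notation is shorthand for $\bigl(\int_{\R^2}\sum_k(1+\abs{(k,\eta)}^2)^{-1}\abs{\widehat{G_k}(\eta)}^2\,d\eta\bigr)^{1/2}$. Under that intended reading your first step already finishes the proof, while your second step --- though a correct inequality, $\sum_k a_k^2\leq(\sum_k a_k)^2$ --- goes the wrong way for the application: it bounds the quantity the paper needs by a strictly larger one, so the version of the lemma you end up with would not suffice to derive \eqref{t3e}. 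Drop the second step and record the first as the conclusion.
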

\begin{proof}
Let $C$ be any positive constant, satisfying
\bel{lem1b}
\left\| \sum_{k \in \Z} (1+\abs{(k,\cdot)}^2)^{\frac{1}{2}} \widehat{w_k} \right\|_{L^2(\R^2)} \leq C \norm{w}_{H^1((0,1)\times\R^2)},
\ee
for all $w \in H_0^1((0,1)\times\R^2)$. 
%For $w \in H^1_0((0,1)\times\R^2)$, put $w_k(x'):=\int_0^{2\pi}w(x_1,x')e^{-i 2 \pi kx_1}dx_1$ for $x'\in\R^2$ and $k \in \Z$. 
Since $G \in L^2((0,1)\times\R^2)$, we have
$$ \langle G , w \rangle_{H^{-1}((0,1) \times \R^2) , H_0^1((0,1) \times \R^2)} = \langle G , w \rangle_{L^2((0,1)\times\R^2)} = \int_{\R^2} \left( \sum_{k \in \Z} \widehat{G_k}(\eta) \overline{\widehat{w_k}(\eta)} \right) d\eta, $$
by Plancherel formula. It follows from this and \eqref{lem1b} that
$$ \abs{ \langle G, w \rangle_{H^{-1}((0,1) \times \R^2),H_0^1((0,1)\times\R^2)}} \leq C 
\left\| \sum_{k \in \Z} (1+\abs{(k,\cdot)}^2)^{-\frac{1}{2}} \widehat{G_k}  \right\|_{L^2(\R^2)} \norm{w}_{H^1((0,1)\times\R^2)},$$
entailing \eqref{lem1a}.
\end{proof}

\subsection{Completion of the proof}
Let us express $\xi_0 \in \mathbb{S}_1$ as $\xi_0=(\cos \alpha_0,\sin \alpha_0)$, where $\alpha_0$ is uniquely defined in $[0,2\pi)$, and for $\epsilon>0$ satisfying \eqref{F'}, pick $\alpha_1 \in (0,\pi)$ such that
\bel{eb1} 
\{(\cos \alpha,\sin \alpha);\ \alpha \in (\alpha_0-\alpha_1,\alpha_0+\alpha_1) \} \subset \{ z \in \mathbb S^1;\ \abs{z-\xi_0} \leq \epsilon\}.
\ee
%Thus, we know from Lemma \ref{l100} that the estimate \eqref{t3b} holds for every $r\geq r_1$, $k \in \mathbb Z$ and all
%$$ \eta \in E_1:=\{t (-\sin \alpha,\cos \alpha);\ \alpha \in (\alpha_0-\alpha_1,\alpha_0+\alpha_1),\ t \in (0,+\infty) \}. $$
Next, for $\rho>0$ fixed, and for all $\eta \in \R^2$ and $k \in \mathbb Z$, we introduce
\bel{def-hk}
H_k(\eta):=\widehat{v_k}(\rho\eta) =\frac{1}{2\pi}\int_{\R^2} v_k(x')e^{-i \rho \eta \cdot x'}dx'\ \mbox{with}\
v_k(x') :=\int_0^{1}V(x_1,x')e^{-i 2 \pi k x_1} d x_1.
\ee
Since $v_k$ is supported in $\overline{\omega}$ and $0 \in\omega$ by assumption, the function $H_k$ is analytic in $\R^2$, and we get through elementary computations that
$2 \pi \abs{\pd^\beta H_k(\eta)} \leq \norm{v_k}_{L^1(\omega)} c_\omega^{\abs{\beta}} \rho^{\abs{\beta}} \leq \abs{\omega}^{\frac{1}{2}} \norm{v_k}_{L^2(\omega)} c_\omega^{\abs{\beta}} \rho^{\abs{\beta}}$ for each $\beta\in\mathbb N^2$,
where we recall from Subsection \ref{sec-pe} that $c_\omega=\max \{ |x'|,\ x' \in \omega \}$.
As $\norm{v_k}_{L^2(\omega)} \leq \norm{V}_{L^2(\check{\Omega})} \leq M_+ \abs{\omega}^{\frac{1}{2}}$ from the Plancherel theorem, and $\rho^{\abs{\beta}} \leq \abs{\beta}! e^{\rho}$, this entails that
$$
\abs{\pd^\beta H_k(\eta)}\leq \frac{M_+ \abs{\omega}}{2 \pi} e^{\rho} c_\omega^{\abs{\beta}} \abs{\beta}!,\ \beta\in\mathbb N^2. 
$$
Thus, applying Lemma \ref{p8} with $f=H_k$, $R=c_\omega^{-1}+1$, $c=M_+ \abs{\omega} e^{\rho} \slash (2 \pi)$, $\varrho=(R c_\omega)^{-1}=(1+c_\omega)^{-1} \in (0,1]$, and
\bel{eb2}
E:= \left\{ t (-\sin \alpha,\cos \alpha);\ \alpha \in (\alpha_0-\alpha_1,\alpha_0+\alpha_1),\ t \in [ 0 , \min(1 , R \slash 2) ) \right\},
\ee
we find that
%\[\norm{D^\beta H_k}_{L^\infty(D_{2R})}\leq C\frac{e^{2\rho}\beta!}{(\textrm{Diam}(\omega)^{-1})^{\abs{\beta}}}=\frac{N\beta!}{(R l)^{\abs{\beta}}},\quad \beta\in\mathbb N^2.\]
%Then, since $E\subset D_{\frac{R}{2}}$, $\abs{E}>0$, $0<l<1$ and $R>1$, applying Proposition \ref{p8} to $H_k$ we obtain
\bel{eb3}
\norm{H_k}_{L^\infty(D_R)}\leq C e^{\rho(1-\upsilon)} \norm{H_k}_{L^\infty(E)}^\upsilon,
\ee
where $C=C(\omega,M_+,G')>0$ and $\upsilon=\upsilon(\omega,M_+,G') \in (0,1)$. Moreover, in view of \eqref{eb1}-\eqref{eb2}, Lemma \ref{l100} tells us that the estimate \eqref{t3b} holds uniformly in $\eta \in E$, i.e.
$\norm{H_k}_{L^\infty(E)}^2 \leq C \left( \tau^{-1} + e^{C'\tau} \norm{\Lambda_{{V}_2,\theta}-\Lambda_{{V}_1,\theta}}^2 \right)$,
provided $r \geq r_1$. It follows from this and \eqref{eb3} that
%$$ \abs{\mathcal F(v_k)(\eta)}^2 \leq Ce^{2 \rho(1- \upsilon)} \left(\frac{1}{\tau} + e^{C'\tau}\norm{\Lambda_{{V}_2,\theta}-\Lambda_{{V}_1,\theta}}^2\right)^\upsilon,\ 
%\abs{\eta}<\rho,\ r \geq r_1.\]
%In particular, we have
\bel{t3c}
\abs{\widehat{v_k}(\eta)}^2 \leq C e^{2\rho(1-\upsilon)} \left( \frac{1}{\tau} + e^{C'\tau} \norm{\Lambda_{{V}_2,\theta}-\Lambda_{{V}_1,\theta}}^2 \right)^\upsilon,\ \abs{(k,\eta)}<\rho,\ r \geq r_1.
\ee
The next step is to apply Lemma \ref{lem1} with $G=V$: With reference to \eqref{def-hk}, we obtain that
\bea
\| V \|_{H^{-1}((0,1)\times\R^2)}^2 & \leq & C \int_{\R^2} \sum_{k =-\infty}^{+\infty} (1+\abs{(k,\eta)}^2)^{-1}| \widehat{v_k}(\eta)|^2  d \eta \nonumber \\
& \leq & C \int_{\R^3} (1+\abs{(k,\eta)}^2)^{-1}| \widehat{v_k}(\eta) |^2  d \mu(k) d \eta, \label{t3e}
\eea
where $\mu:=\sum_{n \in \Z} \delta_n$. Putting $B_\rho:=\{(k,\eta) \in \R^3,\ | (k,\eta) | \leq \rho \}$  we now examine the two integrals
$\int_{B_\rho} (1+\abs{(k,\eta)}^2)^{-1}| \widehat{v_k}(\eta) |^2   d \mu(k) d \eta$ and $\int_{\R^3 \setminus B_\rho} (1+\abs{(k,\eta)}^2)^{-1}| \widehat{v_k}(\eta) |^2  d \mu(k) d \eta$ separately. The last one, is easily treated, as we have
$$
\int_{\R^3 \setminus B_{\rho}} (1+\abs{(k,\eta)}^2)^{-1}| \widehat{v_k}(\eta) |^2  d \mu(k) d \eta 
\leq \frac{1}{\rho^2} \int_{\R^3 \setminus B_{\rho}} | \widehat{v_k}(\eta) |^2  d \mu(k) d \eta 
\leq \frac{1}{\rho^2} \int_{\R^3} | \widehat{v_k}(\eta) |^2  d \mu(k) d \eta, 
$$
%\beas
%\int_{\R^3 \setminus B_{\rho}} (1+\abs{(k,\eta)}^2)^{-1}| \mathcal F(v_k)(\eta) |^2  d \mu(k)d \eta 
%& \leq & \frac{1}{\rho^2} \int_{\R^3 \setminus B_{\rho}} | \mathcal F(v_k)(\eta) |^2  d \mu(k)d \eta \\
%& \leq & \frac{1}{\rho^2} \int_{\R^3} | \mathcal F(v_k)(\eta) |^2  d \mu(k)d \eta, 
%\eeas
and hence
\bel{t3f}
\int_{\R^3 \setminus B_{\rho}} (1+\abs{(k,\eta)}^2)^{-1}| \widehat{v_k}(\eta) |^2  d \mu(k)d \eta  \leq \frac{1}{\rho^2} \int_{(0,1) \times \R^2} | V(x_1,x') |^2 d x_1d  x' \\ 
\leq \frac{\abs{\omega} M_+^2}{\rho^2},
\ee
by Parseval-Plancherel theorem.
%We end up getting that
%\bel{t3f}
%\int_{\R^3 \setminus B_{\rho}} (1+\abs{(k,\eta)}^2)^{-1}| \mathcal F(v_k)(\eta) |^2   d \mu(k)d \eta \leq \frac{C}{\rho^2}.
%\ee
We turn now to studying the first integral. To this end we notice upon setting $\cC_\rho:=\R \times D_{1 \slash \rho}$, where we recall that $D_{1 \slash \rho}=\{ \eta \in \R^2;\ | \eta | < 1 \slash \rho \}$, that
\bea
& & \int_{B_{\rho} \cap \cC_\rho} (1+\abs{(k,\eta)}^2)^{-1} | \widehat{v_k}(\eta) |^2 d\eta d \mu(k) \nonumber \\
& \leq & \int_{\cC_\rho} | \widehat{v_k}(\eta) |^2  d \mu(k) d \eta \leq \int_{D_{1 \slash \rho} } \left( \sum_{k \in \Z} | \widehat{v_k}(\eta) |^2 \right) d \eta
%\leq \frac{\pi}{\rho^2} \sup_{\eta \in D_{1 \slash \rho}} \left( \sum_{k \in \Z} | \mathcal F(v_k)(\eta) |^2 \right). \label{eb4}
\leq \frac{\pi}{\rho^2} \norm{\sum_{k \in \Z} | \widehat{v_k} |^2}_{L^{\infty}(D_{1 \slash \rho})}. \label{eb4}
\eea
Moreover, since each $v_k$, for $k \in \Z$, is supported in $\omega$ by \eqref{def-hk}, we have
$$ | \widehat{v_k}(\eta) |^2 \leq \frac{1}{4 \pi^2} \| v_k \|_{L^1(\omega)}^2 \leq \frac{|\omega|}{4 \pi^2} \| v_k \|_{L^2(\omega)}^2,\ \eta \in \R^2, $$
which entails
$$ \sum_{k \in \Z} | \widehat{v_k}(\eta) |^2 \leq \frac{| \omega |}{4 \pi^2} \left( \sum_{k \in \Z} \| v_k \|_{L^2(\omega)}^2 \right) \leq \frac{| \omega |}{4 \pi^2} \| V \|_{L^2(\check \Omega)}^2 \leq \frac{M_+^2 | \omega |^2}{4 \pi^2},\ \eta \in \R^2, $$
upon applying the Parseval formula and the dominated convergence theorem.
Putting this together with \eqref{eb4}, we obtain that
\bel{t3g}
\int_{B_{\rho} \cap \cC_\rho} (1+\abs{(k,\eta)}^2)^{-1} | \widehat{v_k}(\eta) |^2 d\eta d \mu(k) \leq \frac{M_+^2 | \omega |^2}{4 \pi \rho^2}.
\ee
Further, if $(k,\eta) \in B_\rho \cap (\R^3 \setminus \cC_\rho)$, then there exists a positive constant $C$ such that $\tau(k,\eta,\theta,r) \in (r, C \rho^2 r]$ for all $\rho \geq 1$ and $r \geq r_1$, according to \eqref{ll1a}. As a consequence we have
$$
\abs{\widehat{v_k}(\eta)}^2 \leq Ce^{2\rho(1-\upsilon)} \left(\frac{1}{r} + e^{C \rho^2 r} \norm{\Lambda_{{V}_2,\theta}-\Lambda_{{V}_1,\theta}}^2\right)^\upsilon,\ r \geq r_1,\ (k,\eta) \in B_\rho \cap (\R^3 \setminus \cC_\rho),$$
by \eqref{t3c}, and hence
\bel{t3h}
\int_{ B_{\rho} \cap (\R^3 \setminus \cC_\rho)} (1+\abs{(k,\eta)}^2)^{-1} | \widehat{v_k}(\eta) |^2   d \mu(k) d \eta
\leq C \rho^3 e^{2\rho(1-\upsilon)} \left(\frac{1}{r}+e^{C \rho^2 r} \norm{\Lambda_{{V}_2,\theta}-\Lambda_{{V}_1,\theta}}^2\right)^\upsilon,\ r \geq r_1,
\ee
upon eventually substituting $C$ for some suitable algebraic expression of $C$.

Therefore, putting \eqref{t3e}-\eqref{t3f} and \eqref{t3g}-\eqref{t3h} together, we find for all $\rho \geq 1$ and $r \geq r_1$ that
\bea
\| V \|_{H^{-1}((0,1)\times\R^2)}^{\frac{2}{\upsilon}} &\leq & C \left( \frac{1}{\rho^2}+ \rho^3 e^{2\rho(1-\upsilon)} \left(\frac{1}{r} + e^{C \rho^2 r}  \norm{\Lambda_{{V}_2,\theta}-\Lambda_{{V}_1,\theta}}^2 \right)^\upsilon \right)^{\frac{1}{\upsilon}} \nonumber \\
&\leq & C \left( \rho^{-\frac{2}{\upsilon}} +\rho^{\frac{3}{\upsilon}} e^{2 \rho \left( \frac{1}{\upsilon} -1 \right)} r^{-1} + \rho^{\frac{3}{\upsilon}} e^{2 \rho \left( \frac{1}{\upsilon} -1 \right)} e^{C\rho^2 r} \norm{\Lambda_{{V}_2,\theta}-\Lambda_{{V}_1,\theta}}^2\right). \label{t3j}
\eea
Here we substituted $C=C(\omega,M_+,F',\upsilon)>0$ for $2^{\frac{1}{\upsilon}} C$ in the last line.
%since the function $t \mapsto t^{\frac{1}{\upsilon}}$ is convex on $\R^+$ as we have $\upsilon \in(0,1)$.
%Here the constant $C>0$ depends only on $\omega$, $M$ and $F'$. 
Thus, taking 
$r:=\rho^{\frac{5}{\upsilon}} e^{2\rho \left( \frac{1}{\upsilon}-1 \right)}$ in \eqref{t3j}, with $\rho \geq\rho_1$, in such a way that $r \geq r_1$ and 
$\rho^{\frac{3}{\upsilon}}  e^{2\rho \left( \frac{1}{\upsilon}-1 \right)} r^{-1} = \rho^{-\frac{2}{\upsilon}}$, we obtain that
\bel{t3k}
\| V \|_{H^{-1}((0,1)\times\R^2)} \leq 
C\left( \rho^{-\frac{2}{\upsilon}}+\rho^{\frac{3}{\upsilon}} e^{2\rho \left( \frac{1}{\upsilon}-1 \right)} \exp \left(C{\rho}^{\left( 2 + \frac{5}{\upsilon} \right)} 
e^{2\rho \left( \frac{1}{\upsilon}-1 \right)} \right) \norm{\Lambda_{{V}_2,\theta}-\Lambda_{{V}_1,\theta}}^2 \right)^{\frac{\upsilon}{2}}.
\ee
Let $C'>0$ be so large that $\rho^{\frac{3}{\upsilon}} e^{2\rho \left( \frac{1}{\upsilon}-1 \right)} \exp \left( C{\rho}^{\left( 2 + \frac{5}{\upsilon} \right)} e^{2\rho \left( \frac{1}{\upsilon}-1 \right)} \right) \leq \exp \left( e^{C' \rho} \right)$ for every $\rho \geq \rho_1$. Notice that $C'$ depends only on $C$ and $\upsilon$, hence on $\omega$, $M_+$ and $G'$. Then \eqref{t3k} entails that
\bel{t3l}
\| V \|_{H^{-1}((0,1)\times\R^2)} \leq 
C\left( \rho^{-\frac{2}{\upsilon}} + \exp \left( e^{C' \rho} \right) \norm{\Lambda_{{V}_2,\theta}-\Lambda_{{V}_1,\theta}}^2 \right)^{\frac{\upsilon}{2}},\ \rho \geq \rho_1.
\ee
Set $\gamma^*:=\exp \left(-e^{C' \rho_1} \right) \in (0,1)$. If $\gamma:=\norm{\Lambda_{{V}_2,\theta}-\Lambda_{{V}_1,\theta}} \in (0,\gamma^*]$, we find upon taking
$\rho=\frac{1}{C'} \ln(\abs{\ln \gamma}) \geq \rho_1$  in \eqref{t3l}, that
\bel{eb5}
\| V \|_{H^{-1}(\check{\Omega})} \leq \| V \|_{H^{-1}((0,1)\times\R^2)} \leq C \left( {C'}^{\frac{2}{\upsilon}} + \gamma (\ln \abs{\ln \gamma} )^{\frac{2}{\upsilon}} \right)^\frac{\upsilon}{2} (\ln \abs{\ln \gamma})^{-1} .
\ee
Further, since 
$\sup_{\gamma \in (0,\gamma_*]} \left( {C'}^{\frac{2}{\upsilon}} + \gamma (\ln \abs{\ln \gamma} \right)^{\frac{2}{\upsilon}}$ is just another positive constant depending only on $\omega$, $M$ and $G'$, we end up getting from \eqref{eb5} that
\bel{t3n}
\| V \|_{H^{-1}(\check{\Omega})}\leq C (\ln \abs{\ln \gamma} )^{-1},\ \gamma \in (0,\gamma^*].
\ee
Finally, as $\| V \|_{H^{-1}(\check{\Omega})}\leq C \| V \|_{L^\infty(\check{\Omega})}\leq (C M_+ \slash \gamma^*) \gamma$ for $\gamma > \gamma^*$, where $C>0$ is a constant depending only on $\omega$, we deduce \eqref{thm2a} from this and \eqref{t3n}.

\bigskip

\vspace*{.5cm}
\noindent {\sc Mourad Choulli}, Universit\'e de Lorraine, Institut Elie Cartan de Lorraine, CNRS, UMR 7502, Boulevard des Aiguillettes, BP 70239, 54506 Vandoeuvre les Nancy cedex - Ile du Saulcy, 57045 Metz cedex 01, France.\\
E-mail: {\tt  mourad.choulli@univ-lorraine.fr}. \vspace*{.1cm} \\

\noindent {\sc Yavar Kian}, Aix-Marseille Universit\'e, CNRS, CPT UMR 7332, 13288 Marseille, and Universit\'e de Toulon, CNRS, CPT UMR 7332, 83957 La Garde, France.\\
E-mail: {\tt yavar.kian@univ-amu.fr}. \vspace*{.1cm} \\

\noindent {\sc Eric Soccorsi}, Aix-Marseille Universit\'e, CNRS, CPT UMR 7332, 13288 Marseille, and Universit\'e de Toulon, CNRS, CPT UMR 7332, 83957 La Garde, France.\\
E-mail: {\tt eric.soccorsi@univ-amu.fr}.

\end{document}